\def\titlerunning#1{\gdef\titrun{#1}}
\def\author#1{\gdef\autrun{\def\and{\unskip, }#1}\gdef\@author{#1}}
\def\address#1{{\def\and{\\\hspace*{18pt}}\renewcommand{\thefootnote}{}%
\footnote {#1}}%
\markboth{\autrun}{\titrun}}
\def\email#1{e-mail: #1}
\def\keywords#1{\par\medskip
\noindent\textbf{Keywords.} #1}
\newtheorem{theorem}{Theorem}[section]
\newtheorem{corollary}[theorem]{Corollary}
\newtheorem{lemma}[theorem]{Lemma}
\newtheorem{proposition}[theorem]{Proposition}
\theoremstyle{definition}
\newtheorem{definition}[theorem]{Definition}
\newtheorem{remark}[theorem]{Remark}
\numberwithin{equation}{section}
\def \C {\mathbb{C}}
\def \a {\alpha }
\def \b {\beta}
\def \De {\Delta}
\def \la {\lambda}
\def \La {\Lambda}
\def\w {\omega}
\def\Om{\Omega}
\def\pa{\partial}
\def\na {\nabla}
\def\Ga{\Gamma}
\begin{document}
\baselineskip=17pt

\titlerunning{$L^{2}$ vanishing theorem on some K\"{a}hler manifolds}
\title{$L^{2}$ vanishing theorem on some K\"{a}hler  manifolds}

\author{Teng Huang}

\date{}

\maketitle

\address{T. Huang: School of Mathematical Sciences, University of Science and Technology of China; Key Laboratory of Wu Wen-Tsun Mathematics, Chinese Academy of Sciences, Hefei, Anhui, 230026, P. R. China; \email{htmath@ustc.edu.cn;htustc@gmail.com}}

\begin{abstract}
Let $E$ be a Hermitian vector bundle over a complete K\"{a}hler manifold $(X,\w)$, $\dim_{\C}X=n$, with a $d$(bounded) K\"{a}hler form $\w$, $d_{A}$ be a Hermitian connection on $E$. The goal of this article is to study the $L^{2}$-Hodge theory on the vector bundle $E$. We extend the results of Gromov's \cite{Gro} to the Hermitian vector bundle. At last, as an application, we prove a gap result for Yang-Mills connection on the bundle $E$ over $X$.
\end{abstract}
\keywords{Hodge theory; $d$(bounded) form; Hermitian vector bundle; gauge theory}
\section{Introduction}
Let $(X,\w)$ be a complete K\"{a}hler manifold, $\dim_{\C}X=n$. A basic question, pertaining both to the function theory and topology of $X$, is: when are there non-trivial harmonic forms on $X$, in the various bidegree $(p,q)$ determined by the complex structure? When $X$ is not compact, a growth condition on the harmonic forms at infinity must be imposed, in order that the answer to this question  be useful. A natural growth condition is square-integrability, if $\Om^{p,q}_{(2)}(X)$ denotes the $L^{2}$-forms of type $(p,q)$ on $X$ and $\mathcal{H}^{p,q}_{(2)}(X)$ the harmonic forms in $\Om^{p,q}_{(2)}(X)$. One version of this basic question is: what is the structure of $\mathcal{H}^{p,q}_{(2)}(X)$?

The Hodge theorem for compact manifolds states that every de Rahm cohomology class of a compact manifold $X$ is represented by a unique harmonic form. That is, the space of solutions to the differential equation $(d+d^{\ast})\a=0$ on $L^{2}$-forms over $X$ is a space that depends on the metric on $X$. This space is canonically isomorphic to the purely topological de Rahm cohomology space of $X$. The study of $\mathcal{H}^{p,q}_{(2)}(X)$,a question of so-called $L^{2}$-cohomology of $X$, is rooted in the attempt to extend  Hodge theory to non-compact manifolds. No such result holds in general for complete non-compact manifolds, but there are numerous partial results about the $L^{2}$-cohomology of non-compact manifold. The study of the $L^{2}$-harmonic forms on a complete Riemannian manifold is a very  fascinating and important subject. It also has numerous applications in the field of Mathematical Physics (see for example \cite{Hit,Ete1,Ete2,EJ,SS}).

A differential form $\a$ on a Riemannian manifold $(X,\rm{g})$ is bounded with respect to the Riemannian metric $\rm{g}$ if the $L^{\infty}$-norm of $\a$ is finite,
$$\|\a\|_{L^{\infty}(X)}:=\sup_{x\in X}\|\a(x)\|_{\rm{g}}<\infty.$$
We say that $\a$ is $d$(bounded) if $\a$ is the exterior differential of a bounded form $\b$, i.e., $\a=d\b$ and $\|\b\|_{L^{\infty}(X)}<\infty$.

In \cite{Gro}, Gromov's theorem \cite{Gro} states that if the K\"{a}hler form $\w$ on a complete K\"{a}hler manifold satisfies $\w=d\theta$, where $\theta$ is a bounded $1$-form,  then the only  non-trivial $L^{2}$-harmonic form lies in the middle dimension. Jost-Zuo \cite{JZ} and Cao-Xavier \cite{CX} extended Gromov's theorem to the case of sublinear growing $1$-form $\theta$. Other results on $L^{2}$ cohomology can be found in \cite{And,Dod,McN1,McN2}.

In this article, we consider the Hodge theory on a Hermitian vector bundle $E$ over a complete, K\"{a}hler manifold $X$, $\dim_{\C}X=n$, with a K\"{a}hler form $\w$. Define a smooth K\"{a}hler metric, $g(\cdot,\cdot)=\w(\cdot,J\cdot)$ on $X$, where $J$ is the complex structure on $X$. Let $d_{A}$ be a Hermitian connection on $E$. The formal adjoint operator of $d_{A}$ acting on $\Om^{k}(X,E):=\Om^{k}(X)\otimes E$  is $d^{\ast}_{A}=-\ast d_{A}\ast$, where $\Om^{k}(X)$ is smooth $k$-forms on $X$ and $\ast$ is the Hodge star operator with respect to the metric $g$. We denote by $\mathcal{H}^{k}_{(2)}(X,E)$ the space of $L^{2}$ harmonic forms in  $\Om^{k}(X)$ with respect to the Laplace-Beltrami operator $\De_{A}:=d_{A}d_{A}^{\ast}+d_{A}^{\ast}d_{A}$, See Definition \ref{D3.1}. 

If $\w$ is $d$(bounded), we can extend the vanishing theorem of Gromov's to Hermtian vector bundle. 
\begin{theorem}\label{T1}
Let $(X,\w)$ be a complete, K\"{a}hler manifold, $\dim_{\C}X=n$, with a $d$(bounded) K\"{a}hler form $\w$, $E$ be a Hermitian vector bundle over $X$ and $d_{A}$ be a Hermitian connection on $E$.  Then a smooth form $\a\in\Om^{p,q}(X,E)\cap\mathcal{H}^{p+q}_{(2)}(X,E)$ is zero unless $p+q=n$.
\end{theorem}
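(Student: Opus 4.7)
Write $k:=p+q$ and first treat the case $k<n$; the case $k>n$ then follows by Hodge duality, since $\ast\a$ is $L^{2}$-harmonic of type $(n-q,n-p)$ and total degree $2n-k<n$. Because $X$ is complete and $\a$ is $L^{2}$-harmonic, one has $d_{A}\a=0$ and $d_{A}^{\ast}\a=0$. The plan is to extend Gromov's Stokes argument to the bundle setting by exploiting that $\w=d\theta$ with $\theta\in L^{\infty}$, so that $\w^{n-k}=d(\theta\wedge\w^{n-k-1})$ is, up to a bounded potential, exact.

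Denote by $h$ the Hermitian metric on $E$, and let $\{\a,\a\}_{h}$ be the scalar-valued $(k,k)$-form obtained by pairing bundle parts through $h$ and wedging form parts. The Hermitian condition on $d_{A}$ gives the Leibniz rule
\[
 d\{\a,\a\}_{h}=\{d_{A}\a,\a\}_{h}+(-1)^{k}\{\a,d_{A}\a\}_{h},
\]
which, combined with $d_{A}\a=0$, shows $\{\a,\a\}_{h}$ is $d$-closed. Together with $\w^{n-k}=d(\theta\wedge\w^{n-k-1})$ one obtains
\[
 \w^{n-k}\wedge\{\a,\a\}_{h}=d\bigl(\theta\wedge\w^{n-k-1}\wedge\{\a,\a\}_{h}\bigr),
\]
a $(2n)$-form whose potential is pointwise bounded by $\|\theta\|_{\infty}|\w|^{n-k-1}|\a|^{2}$ and hence lies in $L^{1}(X)$. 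Picking a smooth exhaustion of $X$ by geodesic balls $B_{R}$ and cut-offs $\chi_{R}$ with $|d\chi_{R}|\le C/R$ (available on any complete manifold), Stokes applied to $\chi_{R}\,d(\,\cdot\,)$ together with dominated convergence yields
\[
 \int_{X}\w^{n-k}\wedge\{\a,\a\}_{h}=0,
\]
since the boundary-type term $\int_{X}d\chi_{R}\wedge(\,\cdot\,)$ is $O(1/R)$ by the $L^{\infty}$-$L^{2}$ estimate on the potential.

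To pass from this integral vanishing to $\a=0$, I would decompose $\a=\sum_{j}L^{j}\b_{j}$ by the algebraic Lefschetz decomposition, where $\b_{j}\in\Om^{p-j,q-j}(X,E)$ is primitive. For a primitive $\b$ of type $(p',q')$ and degree $r=p'+q'$ there is a pointwise identity $\w^{n-r}\wedge\{\b,\b\}_{h}=c_{p',q'}|\b|^{2}\w^{n}/n!$ with $c_{p',q'}\ne 0$; consequently, once each primitive $\b_{j}$ is known to be $L^{2}$-harmonic, the same Stokes argument applied to $\b_{j}$ of degree $r_{j}=k-2j<n$ (using $d_{A}\b_{j}=0$ and $\w^{n-r_{j}}=d(\theta\wedge\w^{n-r_{j}-1})$) forces $\|\b_{j}\|_{L^{2}}=0$, and summing gives $\a=0$. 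The principal obstacle is therefore to show that the primitive Lefschetz components of a pure-type bundle-valued $L^{2}$-harmonic form are themselves harmonic. In the scalar K\"ahler case this is immediate from $[L,\De]=[\La,\De]=0$; in the bundle-valued setting one must invoke the bundle K\"ahler identities, which for a Hermitian $d_{A}$ and parallel $\w$ give the analogous commutation of $L$ and $\La$ with $\De_{A}$ on forms of pure type. This verification is the main technical step, and once it is in place the proof collapses to the Stokes-cut-off computation above.
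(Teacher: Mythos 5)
Your overall strategy --- Gaffney--Stokes applied to $\theta\wedge\w^{n-k-1}\wedge\{\a,\a\}_{h}$ followed by the Hodge--Riemann sign identities on primitive Lefschetz pieces --- is genuinely different from the paper's proof, which instead observes that $L^{m}\a=d_{A}(\theta\wedge\w^{m-1}\wedge\a)$ is $d_{A}$-exact with $L^{2}$ potential and simultaneously harmonic (Lemma \ref{L3.4}), hence zero by the integration-by-parts of Proposition \ref{P3.5}, and then invokes Lefschetz injectivity. Your Stokes step itself is fine (it is essentially Proposition \ref{P2}). The genuine gap sits exactly where you flag the ``main technical step'': you assert that the bundle K\"ahler identities give the commutation of $L$ and $\La$ with $\De_{A}$ on forms of pure type. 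This is false for a general Hermitian connection. Lemma \ref{L3} of the paper computes $[\De_{A},L]=2\sqrt{-1}(F^{2,0}_{A}-F^{0,2}_{A})$, a purely algebraic wedge operator which does not vanish on pure-type forms unless $d_{A}\in\mathcal{A}^{1,1}_{E}$, and its adjoint gives the same obstruction for $\La$. So $\De_{A}$ does not preserve the Lefschetz decomposition, and you cannot deduce harmonicity of the primitive components $\b_{j}$ by commuting $\De_{A}$ past $L$ and $\La$.

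The step is repairable, but by a different mechanism. Since $\a$ has pure type $(p,q)$ and $d_{A}\a=d^{\ast}_{A}\a=0$, bidegree reasons give $\pa_{A}\a=\bar{\pa}_{A}\a=\pa^{\ast}_{A}\a=\bar{\pa}^{\ast}_{A}\a=0$. The \emph{first-order} K\"ahler identities (Proposition \ref{P3}) contain no curvature, and they show that $L$ and $\La$ preserve the joint kernel of these four operators; for instance
\begin{equation*}
\bar{\pa}_{A}(\La\a)=\La\bar{\pa}_{A}\a+\sqrt{-1}\,\pa^{\ast}_{A}\a=0 .
\end{equation*}
Hence every $L^{a}\La^{b}\a$, and therefore every primitive component $\b_{j}$, is again annihilated by all four operators, in particular is $d_{A}$-closed and harmonic; the curvature only enters when $L$ is commuted past the second-order operator $\De_{A}$, which your argument need never do. With that substitution (plus the routine check that the cross terms $\w^{n-k}\wedge\{L^{i}\b_{i},L^{j}\b_{j}\}_{h}$ for $i\neq j$ vanish pointwise by primitivity, so that the integral identity really decouples) your proof closes, and it rests on the same on-shell commutation phenomenon as the paper's Lemma \ref{L3.4}, reached by a slightly longer route.
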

\begin{remark}
We set the bundle $E$ to $TX\otimes\C$, then $d_{A}=d$ and $\bar{\pa}_{A}=\bar{\pa}$.  We denote by $$\mathcal{H}^{k}_{(2)}(X):=\{\a\in\Om^{k}_{(2)}(X):\De\a=(dd^{\ast}+d^{\ast})\a=0 \}$$ 
the space of $L^{2}$-harmonic $p$-forms and  $$\mathcal{H}^{p,q}_{(2)}(X):=\{\a\in\Om^{p,q}_{(2)}(X):\De_{\bar{\pa}}\a=(\bar{\pa}\bar{\pa}^{\ast}+\bar{\pa}^{\ast}\bar{\pa})\a=0 \}$$ 
the space of $L^{2}$-harmonic $(p,q)$-forms. By the  K\"{a}hler identities and $\De=2\De_{\bar{\pa}}$, we have $$\Om^{p,q}(X,E)\cap\mathcal{H}^{p+q}_{(2)}(X,E)\cong\mathcal{H}_{(2)}^{p,q}(X).$$
In this case, we also have
\begin{equation}\label{E15}
\mathcal{H}^{k}_{(2)}(X,E)=\bigoplus_{p+q=k}\mathcal{H}^{p,q}_{(2)}(X,E).
\end{equation} 
Therefore, $\mathcal{H}^{k}_{(2)}(X,E)=\{0\}$. But unfortunately, for other cases,  the Equation (\ref{E15}) is alway incorrect, since $\De_{A}=2\De_{\bar{\pa}_{A}}$ is alway incorrect.
\end{remark} 
We denote by $\mathcal{A}^{1,1}_{E}$ the space of all integrable connections $d_{A}$, i.e., $F^{2,0}_{A}=F_{A}^{0,2}=0$. The important observation is that if the Hermitian connection $d_{A}\in\mathcal{A}^{1,1}_{E}$, then the operator $L^{k}$ could commutes with $\De_{A}$ for any $k\in\mathbb{N}^{+}$. Following the idea in \cite{Gro}, we can prove a vanishing theorem on the spaces $\mathcal{H}^{k}_{(2)}(X,E)$. In \cite{Gro}, Gromov also gave a lower bound on the spectra of the Laplace operator $\De:=dd^{\ast}+d^{\ast}d$ on $L^{2}$-forms $\Om^{p}(X)$ for $p\neq n$ to sharpen the Lefschetz vanishing theorem. We also obtain a lower bound on the spectra of the Laplace operator $\De_{A}$ on $L^{2}$-forms $\Om^{p}(X)\otimes E$ for $p\neq n$ to sharpen the vanishing theorem \ref{T3}.   Next, we will consider the $(0,2)$-part of the curvature, $F_{A}^{0,2}$, and suppose it to be non-zero. In this case $\De_{A}$ does not commute with $L$. We also suppose that the $(0,2)$-part of the curvature obeys $\|F^{0,2}_{A}\|_{L^{n}(X)}\leq\varepsilon$, where $\varepsilon$ is a small enough positive constant and the Riemannian curvature tensor is bounded. We can prove that
\begin{theorem}\label{T3}
Let $(X,\w)$ be a complete, K\"{a}hler manifold, $\dim_{\C}X=n$, with a $d$(bounded) K\"{a}hler form $\w$, i.e., there is a bounded $1$-form $\theta$ such that $\w=d\theta$, $d_{A}$ be a smooth Hermitian connection on a Hermitian vector bundle $E$ over $X$. Suppose that the connection $d_{A}$ and the K\"{a}hler metric $g$ obey one of following sets of conditions: \\
(1) $d_{A}\in\mathcal{A}_{E}^{1,1}$,  \\
(2) $d_{A}\notin\mathcal{A}_{E}^{1,1}$ but the curvature $F_{A}$ of the connection $d_{A}$ obeys

 (a)  $\|F^{0,2}_{A}\|_{L^{n}(X)}\leq\varepsilon$ or 
 
 (b) $\|F^{0,2}_{A}\|_{L^{\infty}(X)}\leq\varepsilon$,\\
where $\varepsilon=\varepsilon(X,n,\theta)\in(0,1]$ is positive constant, the Riemannian curvature tensor of $g$ is bounded.\\
If $\a\in\Om^{k}_{(2)}(X,E)$ such that $\De_{A}\a\in L^{2}$, ($k\neq n)$, then we have the inequality
\begin{equation}\label{E08}
c_{n,k}\|\theta\|^{-2}_{L^{\infty}(X)}\|\a\|^{2}_{L^{2}(X)}\leq\langle\a,\De_{A}\a\rangle_{L^{2}(X)},
\end{equation}
where $c_{n,k}>0$ is a constant which depends only on $n,k$. In particular, we have 
$$\mathcal{H}^{k}_{(2)}(X,E)=0,\ \forall\ k\neq n.$$
\end{theorem}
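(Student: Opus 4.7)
The plan is to adapt Gromov's Lefschetz--theoretic argument from the trivial bundle to the $E$-valued setting, reducing the desired spectral gap to a pointwise hard Lefschetz estimate combined with the primitive $\theta$ of $\omega$. By applying the Hodge star $\ast_A$ (which intertwines $\Delta_A$ with itself up to signs) we may assume $k<n$. The algebraic Lefschetz decomposition of $E$-valued $k$-forms, combined with the fact that on primitive pieces of degree $\le k$ the operator $L^{n-k}$ acts as a pointwise positive-definite multiple of the identity, produces a constant $c_{n,k}>0$ such that $|\alpha|\le c_{n,k}|L^{n-k}\alpha|$ pointwise on $X$; so it suffices to bound $\|L^{n-k}\alpha\|_{L^2}^2$ from above by $\langle\alpha,\Delta_A\alpha\rangle_{L^2}$.

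Since $d\omega=0$ implies $[L,d_A]=0$, the relation $\omega=d\theta$ yields the identity
\begin{equation*}
L^{n-k}\alpha = d_A\bigl(\theta\wedge L^{n-k-1}\alpha\bigr)+\theta\wedge L^{n-k-1}d_A\alpha.
\end{equation*}
Pairing with $L^{n-k}\alpha$, integrating by parts in the first summand (legal on the complete manifold $X$ via a Gaffney cutoff since $\alpha,\Delta_A\alpha\in L^2$ implies $L^{n-k}\alpha\in W^{1,2}$), and applying Cauchy--Schwarz with $\|\theta\wedge\cdot\|\le\|\theta\|_{L^\infty}\|\cdot\|$ together with the pointwise boundedness of $L^{n-k-1}$ gives
\begin{equation*}
\|L^{n-k}\alpha\|_{L^2}^2 \le C\|\theta\|_{L^\infty}\bigl(\|d_A^*L^{n-k}\alpha\|_{L^2}+\|d_A\alpha\|_{L^2}\bigr)\|\alpha\|_{L^2}.
\end{equation*}
In case (1), the K\"ahler identities $[\Lambda,\bar\partial_A]=-i\partial_A^*$ and $[\Lambda,\partial_A]=i\bar\partial_A^*$ together with their adjoints $[L,d_A^*]=i(\bar\partial_A-\partial_A)$, and $[L,\partial_A]=[L,\bar\partial_A]=0$ (which follows from $d\omega=0$), yield the clean commutator formula $[L^{n-k},d_A^*]=i(n-k)L^{n-k-1}(\bar\partial_A-\partial_A)$ and hence $\|d_A^*L^{n-k}\alpha\|_{L^2}\le C(\|d_A^*\alpha\|_{L^2}+\|d_A\alpha\|_{L^2})$. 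Substituting back, invoking the hard Lefschetz bound, then dividing by $\|\alpha\|_{L^2}$ and squaring yields the inequality (\ref{E08}); the vanishing $\mathcal{H}^k_{(2)}(X,E)=0$ for $k\neq n$ is then immediate.

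Case (2) is the main obstacle. Here $[L,\Delta_A]=2i(F_A^{0,2}-F_A^{2,0})\wedge\cdot$ is no longer zero, and the natural analogue of the case-(1) commutator identity acquires additional terms coupling $\alpha$ to $F_A^{0,2}$ by wedge product. In subcase (2b), the estimate $\|F_A^{0,2}\wedge\alpha\|_{L^2}\le\varepsilon\|\alpha\|_{L^2}$ is immediate, so choosing $\varepsilon$ small relative to the case-(1) constants lets the error term be absorbed into the left-hand side of the main inequality. In subcase (2a), I would combine H\"older's inequality $\|F_A^{0,2}\wedge\alpha\|_{L^2}\le\|F_A^{0,2}\|_{L^n}\|\alpha\|_{L^{2n/(n-2)}}$ with the Sobolev embedding $W^{1,2}\hookrightarrow L^{2n/(n-2)}$ (whose constant is uniform since $(X,g)$ is complete with bounded Riemannian curvature) and a Bochner--Weitzenb\"ock control $\|\nabla_A\alpha\|_{L^2}^2\le C(\langle\alpha,\Delta_A\alpha\rangle+\|\alpha\|_{L^2}^2)$ to obtain an error of the form $C\varepsilon\bigl(\langle\alpha,\Delta_A\alpha\rangle+\|\alpha\|_{L^2}^2\bigr)$, again absorbable for $\varepsilon=\varepsilon(X,n,\theta)$ sufficiently small. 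The delicate point is to track all such perturbative contributions carefully so that the final $\varepsilon$ depends only on the parameters $X$, $n$, and $\theta$ allowed by the theorem.
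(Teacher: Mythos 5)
Your overall strategy coincides with the paper's: reduce to $k<n$ by Hodge duality, use the pointwise bijectivity of $L^{n-k}\colon\Omega^{k}\to\Omega^{2n-k}$, write $L^{n-k}\alpha=d_{A}(\theta\wedge L^{n-k-1}\alpha)+\theta\wedge L^{n-k-1}d_{A}\alpha$, pair with $L^{n-k}\alpha$, integrate by parts with a cutoff, and control the resulting commutator via the K\"ahler identities, absorbing curvature errors by smallness in case (2). The only structural difference is cosmetic: the paper commutes $L^{k}$ past $\Delta_{A}$ (its commutator lemma $[\Delta_{A},L^{k}]=2k\sqrt{-1}(F^{2,0}_{A}-F^{0,2}_{A})L^{k-1}$) and bounds $\|d_{A}^{\ast}\beta\|_{L^{2}}$ by $\langle\Delta_{A}\beta,\beta\rangle^{1/2}$, whereas you commute $L^{n-k}$ past $d_{A}^{\ast}$; these are equivalent. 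One line is missing in your case (1): $[L^{n-k},d_{A}^{\ast}]$ produces $(\bar{\partial}_{A}-\partial_{A})\alpha$, and for a mixed-bidegree $\alpha$ the bound $\|(\bar{\partial}_{A}-\partial_{A})\alpha\|\le \|d_{A}\alpha\|+\|d_{A}^{\ast}\alpha\|$ is not termwise obvious; it follows from $\|d_{A}^{c}\alpha\|^{2}+\|d_{A}^{c\ast}\alpha\|^{2}=\|d_{A}\alpha\|^{2}+\|d_{A}^{\ast}\alpha\|^{2}$, which itself requires $F_{A}^{2,0}=F_{A}^{0,2}=0$ — so it is legitimately a case-(1) fact, but it should be stated.

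The genuine gap is in subcase (2a). You estimate $\|F_{A}^{0,2}\wedge\alpha\|_{L^{2}}\le\|F_{A}^{0,2}\|_{L^{n}}\|\alpha\|_{L^{2n/(n-2)}}$ and then invoke $W^{1,2}\hookrightarrow L^{2n/(n-2)}$. But $X$ has \emph{real} dimension $2n$, so the critical Sobolev exponent for $W^{1,2}$ is $2\cdot 2n/(2n-2)=2n/(n-1)$, which is strictly smaller than $2n/(n-2)$; the embedding you invoke is false, and this step fails as written. The fix is to never put the product $F_{A}^{0,2}\wedge\alpha$ into $L^{2}$ by itself: the error terms in the pairing occur in trilinear form, and one should bound $\int_{X}|F_{A}^{0,2}|\,|\alpha|^{2}\le\|F_{A}^{0,2}\|_{L^{n}(X)}\|\alpha\|^{2}_{L^{2n/(n-1)}(X)}$, which sits exactly at the borderline of $W^{1,2}\hookrightarrow L^{2n/(n-1)}$ and is how the paper proceeds. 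Relatedly, the Bochner--Weitzenb\"ock bound $\|\nabla_{A}\alpha\|^{2}_{L^{2}}\lesssim\langle\Delta_{A}\alpha,\alpha\rangle+\|\alpha\|^{2}_{L^{2}}$ that both you and the paper use involves the \emph{full} curvature $F_{A}$, not just $F_{A}^{0,2}$, so the dependence of $\varepsilon$ on the hypotheses needs to be tracked there as well.
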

If there exists a differential form $\theta$ and a number $c>0$ such that the K\"{a}hler form $$\w=d\theta,\ \|\theta(x)\|_{L^{\infty}(X)}\leq c(1+\rho(x_{0},x)),$$
where $\rho(x_{0},x)$ stands for the Riemannian distance between $x$ and a base point $x_{0}$, then we call the K\"{a}hler form $\w$  $d$(sublinear) \cite{CX,JZ}.

Since $\De_{A}$ commutes with $L$ for any $d_{A}\in\mathcal{A}^{1,1}_{E}$, following the idea in \cite{CX}, we can prove that 
$$L\a=\a\wedge\w=0$$ 
for any $L^{2}$-harmonic form $\a$.  We then have
\begin{theorem}\label{T5}
Let $(X,\w)$ be a complete, K\"{a}hler manifold, $\dim_{\C}X=n$, with a $d$(sublinear) K\"{a}hler form $\w$, $d_{A}$ be a smooth Hermitian connection on $E$. If $d_{A}\in\mathcal{A}_{E}^{1,1}$, then the spaces of $L^{2}$-harmonic forms $\mathcal{H}^{k}_{(2)}(X,E)$  for $k\neq n$ vanish.
\end{theorem}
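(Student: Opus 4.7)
The strategy is to adapt the Cao--Xavier cutoff argument \cite{CX} to the bundle-valued setting, exploiting the hypothesis $d_{A}\in\mathcal{A}^{1,1}_{E}$ which ensures $\De_{A}$ commutes with the Lefschetz operator $L=\w\wedge\cdot$. Fix $\a\in\mathcal{H}^{k}_{(2)}(X,E)$ with $k\leq n-1$. Because $[\De_{A},L]=0$ and the pointwise norm of $\w$ is constant, $L\a$ is again an $L^{2}$-harmonic form. Harmonicity gives $d_{A}\a=0$, and since $\w=d\theta$ with $\theta$ a scalar one-form we obtain the key representation
$$L\a=d\theta\wedge\a=d_{A}(\theta\wedge\a),$$
which is the identity that will allow us to gain one extra factor through integration by parts.

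Next I would choose Lipschitz cutoffs $\phi_{R}$ with $\phi_{R}\equiv 1$ on $B_{R}(x_{0})$, $\mathrm{supp}\,\phi_{R}\subset B_{2R}(x_{0})$, and $|d\phi_{R}|\leq C/R$. Pairing the identity above against $\phi_{R}^{2}L\a$ and using $d_{A}^{\ast}L\a=0$ (which follows from $L\a$ being harmonic) yields
$$\int_{X}\phi_{R}^{2}|L\a|^{2}=-2\int_{X}\phi_{R}\langle\iota_{d\phi_{R}}L\a,\theta\wedge\a\rangle,$$
and Cauchy--Schwarz then gives
$$\int_{X}\phi_{R}^{2}|L\a|^{2}\leq 4\int_{X}|d\phi_{R}|^{2}|\theta|^{2}|\a|^{2}.$$
On the annular shell $B_{2R}\setminus B_{R}$ the $d$(sublinear) hypothesis yields $|\theta|\leq c(1+2R)$ while $|d\phi_{R}|\leq C/R$, so the product $|d\phi_{R}|\cdot|\theta|$ is bounded uniformly in $R$ by a constant $M=M(c,C)$. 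Consequently
$$\int_{X}\phi_{R}^{2}|L\a|^{2}\leq 4M^{2}\int_{B_{2R}\setminus B_{R}}|\a|^{2},$$
and the right-hand side tends to $0$ as $R\to\infty$ because $\a\in L^{2}(X,E)$. Monotone convergence then forces $L\a=0$.

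To conclude, the pointwise hard Lefschetz isomorphism $L^{n-k}:\wedge^{k}T^{\ast}_{x}X\otimes E_{x}\to\wedge^{2n-k}T^{\ast}_{x}X\otimes E_{x}$ gives fibrewise injectivity of $L$ on $E$-valued $k$-forms whenever $k\leq n-1$, so $L\a=0$ forces $\a=0$. For $k\geq n+1$ I would apply the Hodge star: since $\ast$ commutes with $\De_{A}$, we have $\ast\a\in\mathcal{H}^{2n-k}_{(2)}(X,E)$ with $2n-k\leq n-1$, and the previous argument applied to $\ast\a$ yields $\ast\a=0$, whence $\a=0$.

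The main obstacle is the sharpness of the pointwise bound $|d\phi_{R}|\cdot|\theta|\leq M$ on the shell: this is precisely where the $d$(sublinear) hypothesis is used, and any super-linear growth of $\theta$ would let the right-hand side blow up with $R$ and the scheme would break down. The integration-by-parts step itself is routine, since $\a$ is smooth by elliptic regularity for $\De_{A}$ and the test form $\phi_{R}^{2}L\a$ is compactly supported, so all the manipulations with $d_{A}^{\ast}(\phi_{R}^{2}L\a)=-2\phi_{R}\iota_{d\phi_{R}}L\a$ are justified classically.
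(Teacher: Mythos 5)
Your proof is correct, and its skeleton coincides with the paper's: use $d_{A}\in\mathcal{A}^{1,1}_{E}$ to get $[\De_{A},L]=0$ (Lemma \ref{L3}), so that $L\a$ is again $L^{2}$-harmonic and in particular $d_{A}^{\ast}(L\a)=0$; write $L\a=d_{A}(\theta\wedge\a)$; kill $L\a$ by a cutoff integration by parts using sublinearity of $\theta$; conclude $\a=0$ from Lefschetz injectivity. The one genuine difference is how the error term is controlled. The paper uses unit-width annular cutoffs $f_{j}=\eta(\rho(x_{0},\cdot)-j)$ with $|df_{j}|=O(1)$, so the boundary term is only bounded by $(j+1)\int_{B_{j+1}\setminus B_{j}}|\a|^{2}$; since this need not tend to zero, the paper must extract a subsequence via the divergence of $\sum 1/(j+1)$ against $\|\a\|_{L^{2}}^{2}<\infty$. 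Your dyadic cutoffs $\phi_{R}$ with $|d\phi_{R}|\leq C/R$ make $|d\phi_{R}|\,|\theta|$ uniformly bounded on the shell, so the boundary term is dominated by the $L^{2}$-tail $\int_{B_{2R}\setminus B_{R}}|\a|^{2}$, which tends to zero along the \emph{full} sequence; this is cleaner and avoids the subsequence argument entirely. You also treat the case $k\geq n+1$ explicitly by Hodge duality (the paper leaves this implicit), and your integration by parts is the adjoint formulation of the paper's (you move $d_{A}$ onto the compactly supported test form $\phi_{R}^{2}L\a$, while the paper moves $d_{A}^{\ast}$ off of $L\a$ onto $f_{j}\theta\wedge\a$); these are equivalent. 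The only cosmetic caveat is that your cutoffs are Lipschitz rather than smooth, which is harmless after the usual mollification.
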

Let $(X,\w)$ be a complete K\"{a}hler manifold, $\dim_{\C}X=n$, $E$ be a holomorphic Hermitian vector bundle on $X$.  We denote by $D_{E}=\pa_{E}+\bar{\pa}_{E}$ its Chern connection, i.e., $\bar{\pa}_{E}=\bar{\pa}$, by $D^{\ast}_{E}$ the formal adjoint of $D_{E}$ and by $\pa_{E}^{\ast}$, $\bar{\pa}_{E}^{\ast}$ the components of $D_{E}^{\ast}$ of type $(-1,0)$ and $(0,-1)$. Let $\Theta(E)=\pa_{E}\bar{\pa}_{E}+\bar{\pa}_{E}\pa_{E}$ be the curvature operator on $E$. It is clear that $\bar{\pa}^{2}=0$. Therefore, for any integer $p=0,1,\ldots,n$, we get a complex
$$\Om^{p,0}(X,E)\xrightarrow{\bar{\pa}}\ldots\xrightarrow{\bar{\pa}}\Om^{p,q}(X,E)\xrightarrow{\bar{\pa}}\Om^{p,q+1}(X,E)\rightarrow\ldots,$$
known as the Dolbeault complex of $(p,\bullet)$-forms with values in $E$. We can define two other operators: $\De_{\bar{\pa}_{E}}:=\bar{\pa}_{E}\bar{\pa}_{E}^{\ast}+\bar{\pa}_{E}^{\ast}\bar{\pa}_{E}$ and $\De_{\pa_{E}}:=\pa_{E}\pa_{E}^{\ast}+\pa_{E}^{\ast}\pa_{E}$. Let us introduce, See \cite[Charp V]{Dem}
$$\mathcal{H}^{p,q}_{(2)}(X,E):=\{\a\in\Om^{p,q}_{(2)}(X,E):\De_{\bar{\pa}_{E}}\a=0\}.$$
There are many vanishing theorems for Hermitian vector bundles over a compact complex manifolds. All these theorems are based on  a priori inequality for $(p, q)$-forms with values in a vector
bundle, known as the Bochner-Kodaira-Nakano inequality. This inequality naturally leads to several positivity notions for the curvature of a vector bundle (\cite{Gri69,Kod53a,Kod53b,Kod54,Nak55,Nak73}). We will prove another vanishing theorem for Hermitian vector bundles over the complete K\"{a}hler manifolds with a $d$(bounded) K\"{a}hler form $\w$, only under the curvature $\Theta(E)$ has a small enough $L^{n}$- or $L^{\infty}$-norm. 
\begin{theorem}\label{T1.4}
Let $(X,\w)$ be a complete, K\"{a}hler manifold, $\dim_{\C}X=n$, with a $d$(bounded) K\"{a}hler form $\w$, i.e., there is a bounded $1$-form $\theta$ such that $\w=d\theta$, $D_{E}$ be the Chern connection on a holomorphic Hermitian vector bundle $E$ over $X$. Suppose that the Riemannian curvature tensor of the K\"{a}hler metric $g$ on $X$ is bounded. Then there is a constant $\varepsilon=\varepsilon(X,n,\theta)\in(0,1]$ with following significance. If the curvature $\Theta(E)$ obeys\\
(1) $\|\Theta(E)\|_{L^{n}(X)}\leq \varepsilon$ or\\
(2) $\|\Theta(E)\|_{L^{\infty}(X)}\leq \varepsilon$\\
then any $\a\in\Om^{p,q}_{(2)}(X,E)$ such that $\De_{\bar{\pa}_{E} }\a\in L^{2}$, which satisfies the inequality
\begin{equation}
c_{n,k}\|\theta\|^{-2}_{L^{\infty}(X)}\|\a\|^{2}_{L^{2}(X)}\leq\langle\De_{\bar{\pa}_{E}}\a,\a\rangle_{L^{2}(X)}.
\end{equation}
where $c_{n,k}>0$ is a constant which depends only on $n,k$.\ In particular,\ we have
$$\mathcal{H}^{p,q}_{(2)}(X,E)=0,\ \forall\ p+q\neq n.$$
\end{theorem}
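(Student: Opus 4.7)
The plan is to reduce this result to Theorem~\ref{T3} applied to the Chern connection $D_E$, and then transfer the resulting spectral-gap bound for $\Delta_{D_E}$ to one for $\Delta_{\bar{\pa}_E}$ via the Bochner--Kodaira--Nakano identity, absorbing the error term by the smallness of $\Theta(E)$. The first reduction is immediate: because $E$ is holomorphic and $D_E$ is its Chern connection, the curvature $\Theta(E)=D_E^{2}$ is automatically of pure type $(1,1)$, so $F^{2,0}_{D_E}=F^{0,2}_{D_E}=0$ and hence $D_E\in\mathcal{A}^{1,1}_E$. Theorem~\ref{T3}(1) therefore yields, for any $k\ne n$ and any $\alpha\in\Omega^{p,q}_{(2)}(X,E)$ with $\Delta_{D_E}\alpha\in L^2$, the inequality
\[
c_{n,k}\|\theta\|^{-2}_{L^{\infty}(X)}\|\alpha\|^{2}_{L^{2}(X)}\le\langle\Delta_{D_E}\alpha,\alpha\rangle_{L^{2}(X)}.
\]

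Next I would invoke the Bochner--Kodaira--Nakano identity for a Hermitian holomorphic bundle over a K\"ahler manifold,
\[
\Delta_{D_E}=2\Delta_{\bar{\pa}_E}-[i\Theta(E),\Lambda],
\]
where $\Lambda$ is the adjoint of $L=\w\wedge\cdot$, giving
\[
\langle\Delta_{D_E}\alpha,\alpha\rangle_{L^{2}(X)}=2\langle\Delta_{\bar{\pa}_E}\alpha,\alpha\rangle_{L^{2}(X)}-\langle[i\Theta(E),\Lambda]\alpha,\alpha\rangle_{L^{2}(X)}.
\]
Since $[i\Theta(E),\Lambda]$ is a zeroth-order operator on $(p,q)$-forms whose pointwise norm is bounded by $C_n|\Theta(E)|$, everything reduces to controlling $\int_X|\Theta(E)||\alpha|^2$ by the assumed smallness of $\Theta(E)$.

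In case~(2), the $L^\infty$ bound immediately gives $|\langle[i\Theta(E),\Lambda]\alpha,\alpha\rangle|\le C_n\varepsilon\|\alpha\|^{2}_{L^{2}(X)}$, and a choice of $\varepsilon$ with $C_n\varepsilon\le\tfrac12 c_{n,k}\|\theta\|^{-2}_{L^{\infty}(X)}$ for every $k\ne n$ absorbs the error into the spectral gap obtained above. In case~(1), I would combine H\"older's inequality with the Sobolev embedding $W^{1,2}\hookrightarrow L^{2n/(n-1)}$ on $X$ (available from completeness together with the bounded Riemannian curvature) to obtain
\[
\int_X|\Theta(E)||\alpha|^2\le\varepsilon\|\alpha\|^{2}_{L^{2n/(n-1)}(X)}\le C\varepsilon\bigl(\|\nabla_{D_E}\alpha\|^{2}_{L^{2}(X)}+\|\alpha\|^{2}_{L^{2}(X)}\bigr),
\]
and then use the Weitzenb\"ock identity, together with the bounded Riemannian curvature, to bound $\|\nabla_{D_E}\alpha\|^{2}_{L^{2}(X)}$ in terms of $\langle\Delta_{\bar{\pa}_E}\alpha,\alpha\rangle_{L^{2}(X)}+\|\alpha\|^{2}_{L^{2}(X)}$. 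Taking $\varepsilon$ sufficiently small then absorbs both error contributions and finishes the proof.

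The main obstacle I anticipate is the rigorous justification of these pairings at the $L^{2}$ level: only $\alpha\in L^{2}$ with $\Delta_{\bar{\pa}_E}\alpha\in L^{2}$ is assumed, and a priori neither $\nabla_{D_E}\alpha$ nor $\pa_E\alpha,\bar{\pa}_E\alpha$ need lie in $L^{2}$. A cut-off/approximation argument using the completeness of $X$ should legitimize both the BKN identity and the Weitzenb\"ock estimate, paralleling the analogous step underlying Theorem~\ref{T3}; the constants must be tracked carefully so that the final threshold $\varepsilon=\varepsilon(X,n,\theta)\in(0,1]$ comes out genuinely uniform.
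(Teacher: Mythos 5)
Your proposal follows essentially the same route as the paper: the paper likewise applies Theorem \ref{T3} to the integrable Chern connection, invokes the Bochner--Kodaira--Nakano identity $\De_{E}=2\De_{\bar{\pa}_{E}}-\sqrt{-1}[\Theta(E),\La]$, controls the commutator either by $\|\Theta(E)\|_{L^{\infty}}\|\a\|_{L^{2}}^{2}$ or by $\|\Theta(E)\|_{L^{n}}\|\a\|_{L^{2n/(n-1)}}^{2}$ together with the Sobolev embedding and the a priori gradient estimate of Lemma \ref{L4}, and absorbs the resulting error for $\varepsilon$ small, using exactly the cutoff argument you anticipate to justify the pairings at the $L^{2}$ level. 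The only cosmetic difference is that the paper keeps the absorption entirely in terms of $\langle\De_{E}\a,\a\rangle$ (via $\|\na_{E}\a\|^{2}\lesssim(1+\|\theta\|^{2}_{L^{\infty}})\langle\De_{E}\a,\a\rangle$) rather than passing through $\langle\De_{\bar{\pa}_{E}}\a,\a\rangle$, which does not change the argument.
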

If the background manifold $X$ is a complete K\"{a}hler surface, we don't know any thing about the space of $\mathcal{H}^{2}_{(2)}(X,E)$. In \cite{Hit}, Hitchin proved that any $L^{2}$-harmonic forms of degree $2$ on a hyperbolic K\"{a}hler surface with a $d$(sublinear) growth K\"{a}hler form $\w$, is anti-self-dual. We will also show that if $\a\in \mathcal{H}^{2}_{(2)}(X,E)$, then $\a$ is anti-self-dual under some certain conditions on $X,d_{A}$.
\begin{theorem}\label{T1.6}
Let $(X,\w)$ be a complete K\"{a}hler surface with a $d$(bounded) K\"{a}hler form $\w$, i.e., there is a bounded $1$-form $\theta$ such that $\w=d\theta$, $d_{A}$ be a Hermitian connection on a  Hermitian vector bundle $E$ over $X$. Suppose that the scalar curvature of the K\"{a}hler metric $g$ on $X$ is non-negative. Then there is constant $\varepsilon=\varepsilon(X,n,\theta)\in(0,1]$ with following significance. If the curvature $F_{A}$ of the connection $d_{A}$ obeys
$$\|F_{A}^{+}\|_{L^{2}(X)}\leq \varepsilon$$
then any  $L^{2}$-harmonic form with respect to  $\De_{A}$ is anti-self-dual.
\end{theorem}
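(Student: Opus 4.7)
The aim is to show $\a^{+} := \tfrac{1}{2}(\a + \ast\a) \equiv 0$, which is equivalent to $\a$ being anti-self-dual. First, since $\ast$ commutes with $\De_{A}$ on $\Om^{2}(X,E)$ (via $d_{A}^{\ast} = -\ast d_{A}\ast$), both $\a^{+}$ and $\a^{-}$ belong to $\H^{2}_{(2)}(X,E)$ separately; in particular $d_{A}\a^{\pm} = 0 = d_{A}^{\ast}\a^{\pm}$. The next move is to apply the Bochner--Weitzenb\"ock formula for $E$-valued self-dual 2-forms on a Riemannian 4-manifold,
\[
\De_{A}\a^{+} = \na_{A}^{\ast}\na_{A}\a^{+} + \mathcal{R}^{+}(\a^{+}) + \rho(F_{A}^{+})\a^{+},
\]
where $\mathcal{R}^{+} = (s/3)\,\mathrm{Id} - 2W^{+}$ is the Riemannian curvature action on $\Lambda^{+}$ and $\rho(F_{A}^{+})$ is a pointwise bilinear operator dominated by $|F_{A}^{+}|\,|\a^{+}|$. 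Two geometric facts are decisive: on a K\"ahler surface $\mathcal{R}^{+}$ is diagonal with eigenvalues $0$ on $\mathbb{R}\w$ and $s/2$ on $\mathrm{Re}(\Lambda^{2,0}\oplus\Lambda^{0,2})$, so $s\geq 0$ gives $\mathcal{R}^{+}\geq 0$; and the bundle-curvature term depends only on the self-dual part $F_{A}^{+}$, which is precisely what the hypothesis controls.

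I would then pair the Weitzenb\"ock identity with $\chi_{R}^{2}\a^{+}$ for a cutoff $\chi_{R}$ satisfying $\chi_{R}\equiv 1$ on $B_{R}$ and $|d\chi_{R}|\leq C/R$, integrate by parts, and let $R\to\infty$; the boundary terms die because $\a^{+}\in L^{2}$. Dropping the nonnegative $\mathcal{R}^{+}$ contribution leaves
\[
\|\na_{A}\a^{+}\|_{L^{2}}^{2} \;\leq\; c\,\|F_{A}^{+}\|_{L^{2}}\,\|\a^{+}\|_{L^{4}}^{2}.
\]
Combining Kato's inequality $|\na|\a^{+}||\leq |\na_{A}\a^{+}|$ with a Sobolev inequality on the 4-manifold in the form $\|\a^{+}\|_{L^{4}}^{2}\leq C_{S}\|\na_{A}\a^{+}\|_{L^{2}}^{2}$, the hypothesis $\|F_{A}^{+}\|_{L^{2}}\leq\varepsilon$ with $\varepsilon$ small enough absorbs the right-hand side into the left and forces $\na_{A}\a^{+}\equiv 0$. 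Since $X$ has infinite volume (Stokes applied to $\w^{n} = d(\theta\wedge\w^{n-1})$ with cutoffs, together with $\theta \in L^{\infty}$, rules out a finite total volume) and $\a^{+}\in L^{2}$, the fact that $|\a^{+}|$ is constant (from $\na_{A}\a^{+}=0$) forces $\a^{+}\equiv 0$.

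The hardest step is the absorption: one must secure the Sobolev-type inequality $\|u\|_{L^{4}}\leq C_{S}\|\na u\|_{L^{2}}$ on a complete K\"ahler surface whose only global data are $s\geq 0$ and $\w = d\theta$ with $\theta$ bounded, and the standard routes to such an inequality require extra geometric input. The natural expectation is that the $d$(bounded) structure of $\w$ supplies it---this is the same input that drives the spectral gap in Theorem~\ref{T3}---but a direct reduction is unavailable because the Gromov commutator $[L,\La]=(k-n)\mathrm{Id}$ degenerates at the middle degree $k=n=2$ relevant here. The eigenspace $\mathbb{R}\w$ of $\mathcal{R}^{+}$, on which no scalar-curvature help is available, is also the place where this degeneracy bites the hardest; negotiating that interaction (potentially by splitting $\a^{+} = f\w + \b$ with $\b \in (\Lambda^{2,0}\oplus\Lambda^{0,2})\otimes E$ and handling the trace piece $f\w$ via a Bochner argument on the section $f$ of $E$) is what pins down the smallness threshold $\varepsilon = \varepsilon(X,n,\theta)$ appearing in the statement.
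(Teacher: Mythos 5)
Your outline is essentially the ``real'' Riemannian version of what the paper does, and the geometric inputs you identify ($\mathcal{R}^{+}=\frac{s}{3}-2W^{+}$ vanishing on $\mathbb{R}\w$ and equal to $s/2$ on $\mathrm{Re}(\Lambda^{2,0}\oplus\Lambda^{0,2})$; only $F_{A}^{+}$ entering the curvature term on self-dual forms) are correct. But the step you flag as unresolved is exactly the step that carries the proof, so as written there is a genuine gap: you never establish the bound $\|\a^{+}\|_{L^{4}}^{2}\lesssim\|\na_{A}\a^{+}\|_{L^{2}}^{2}$ that the absorption requires, and without it the smallness of $\|F_{A}^{+}\|_{L^{2}}$ buys nothing. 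The missing idea is not a new Sobolev inequality for $2$-forms at the middle degree; it is to pass to degree zero. By the Kato inequality $|\na|\a^{+}||\leq|\na_{A}\a^{+}|$, the pointwise norm $|\a^{+}|$ is a function, i.e.\ a form of degree $0\neq n=2$, so Gromov's inequality (Theorem \ref{T4}) applies to it and gives $\|\a^{+}\|_{L^{2}}\lesssim\|\theta\|_{L^{\infty}(X)}\|\na_{A}\a^{+}\|_{L^{2}}$; combined with the Sobolev embedding $L^{2}_{1}\hookrightarrow L^{4}$ this yields $\|\a^{+}\|_{L^{4}}^{2}\lesssim(1+\|\theta\|^{2}_{L^{\infty}(X)})\|\na_{A}\a^{+}\|_{L^{2}}^{2}$, which is precisely inequality (\ref{E4.2}) in the paper. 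Your worry about the degeneracy of $[L,\La]$ at $k=n$ is therefore beside the point: the $d$(bounded) hypothesis is exploited on the pointwise norm, not on the $2$-form itself. The same device also replaces your infinite-volume argument at the end: once $\na_{A}\a^{+}=0$, Kato gives $\na|\a^{+}|=0$ and the degree-zero inequality forces $\a^{+}=0$.

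Beyond that gap, your handling of the trace piece differs from the paper's in a way worth noting. You propose a separate Bochner argument on the section $f=\a_{0}$, where $\mathcal{R}^{+}$ gives no help; this can be made to work with the same degree-zero absorption, but the paper instead exploits the first-order coupling between the components of a harmonic $2$-form (Proposition \ref{P1}: $2\bar{\pa}_{A}^{\ast}\a^{0,2}=\sqrt{-1}\,n\,\bar{\pa}_{A}\a_{0}$), which yields $\langle\De_{A}\a_{0},\a_{0}\rangle_{L^{2}(X)}\lesssim\|F_{A}^{0,2}\|_{L^{2}(X)}^{2}\|\a^{0,2}\|_{L^{4}(X)}^{2}$, so that $\a_{0}$ dies for free once the $(0,2)$-component is shown to vanish; the Weitzenb\"ock argument itself is run only on $\a^{0,2}$, via Itoh's formula and the identity $\De_{A}\a^{0,2}=2\De_{\bar{\pa}_{A}}\a^{0,2}$ valid on $(0,2)$-forms (Lemmas \ref{L5} and \ref{L6}). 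Either route is viable, but yours is not complete until the $L^{4}$ bound is supplied as above.
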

In differential geometry and gauge theory, the study of  Yang-Mills connection $d_{A}$ is very important. In fact, the curvature $F_{A}$ of the Yang-Mills connection is a harmonic $2$-form on the bundle $\mathfrak{g}_{E}\subset End E$ with respect to the operator $\De_{{A\otimes A^{\ast}}}$, where $d_{A\otimes A^{\ast}}$ is the connection on $End E$ induced by the connection $d_{A}$ on $E$. There are very few gap results for Yang-Mills connection over non-compact, complete manifolds, for example \cite{DM,EFM,Ger,Min,She,Xin}. Those results depend on some positive conditions of Riemannian curvature tensors. Following the second item of Theorem \ref{T3} and Theorem \ref{T1.6}, we have a gap result for Yang-Mills connection on a complete K\"{a}hler manifold with a K\"{a}hler $d$(bounded) form $\w$.
\begin{corollary}\label{C1}
Let $(X,\w)$ be a complete, K\"{a}hler manifold, $\dim_{\C}X=n$, with a $d$(bounded) K\"{a}hler form $\w$, i.e., there exists a bounded $1$-form $\theta$ such that $\w=d\theta$, $E$ be a Hermitian vector bundle over $X$. If the curvature $F_{A}$ of the smooth Yang-Mills connection $d_{A}$ is in $L^{2}$, then there is a  constant $\varepsilon=\varepsilon(X,n,\theta)\in(0,1]$ with following significance.\\
(1) Suppose that $n\geq 3$ and the Riemannian curvature of the K\"{a}hler metric $g$ is bounded, then either $$\|F_{A}\|_{L^{n}(X)}\geq\varepsilon$$ or the connection $d_{A}$ is flat.\\
(2) Suppose that $n=2$ and the scalar curvature of the K\"{a}hler metric $g$ is non-negative , then either $$\|F_{A}^{+}\|_{L^{2}(X)}\geq\varepsilon$$ or the connection $d_{A}$ is anti-self-dual.
\end{corollary}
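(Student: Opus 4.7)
The plan is to observe that, for a smooth Yang--Mills connection $d_{A}$, the curvature $F_{A}$ is an $L^{2}$-harmonic $2$-form with respect to the induced Laplacian $\De_{A\otimes A^{*}}$ on the bundle $\mathfrak{g}_{E}\subset\mathrm{End}\,E$: the Bianchi identity gives $d_{A\otimes A^{*}} F_{A} = 0$, the Yang--Mills equation gives $d_{A\otimes A^{*}}^{\ast} F_{A} = 0$, and by hypothesis $F_{A}\in L^{2}$, so $F_{A}\in\mathcal{H}^{2}_{(2)}(X,\mathfrak{g}_{E})$. The strategy is then to apply Theorem~\ref{T3}(2)(a) in case~(1) and Theorem~\ref{T1.6} in case~(2) to the auxiliary Hermitian bundle $(\mathfrak{g}_{E}, d_{A\otimes A^{*}})$ in order to force $F_{A}$ either to vanish or to be anti-self-dual.

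The central technical point that makes this reduction transparent is that the curvature of the induced connection is the adjoint action of the original curvature: $F_{A\otimes A^{*}}= \mathrm{ad}(F_{A})=[F_{A},\cdot]$. This identity preserves the Hodge type decomposition, so one obtains the pointwise bounds
$$|F_{A\otimes A^{*}}^{0,2}| \leq C|F_{A}^{0,2}| \leq C|F_{A}| \quad\text{and}\quad |F_{A\otimes A^{*}}^{+}| \leq C|F_{A}^{+}|,$$
with $C$ depending only on the rank of $E$. In particular, an $L^{n}$-bound on $F_{A}$ controls $\|F_{A\otimes A^{*}}^{0,2}\|_{L^{n}(X)}$, and an $L^{2}$-bound on $F_{A}^{+}$ controls $\|F_{A\otimes A^{*}}^{+}\|_{L^{2}(X)}$, up to a harmless multiplicative constant.

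For part~(1), assume $n\geq 3$ and suppose $\|F_{A}\|_{L^{n}(X)}<\varepsilon_{0}$ for some $\varepsilon_{0}$ to be fixed. The bound above yields $\|F_{A\otimes A^{*}}^{0,2}\|_{L^{n}(X)} \leq C\varepsilon_{0}$, which for $\varepsilon_{0}$ small enough is bounded by the threshold in Theorem~\ref{T3}(2)(a). Since $k=2\neq n$, Theorem~\ref{T3} forces $\mathcal{H}^{2}_{(2)}(X,\mathfrak{g}_{E}) = 0$, hence $F_{A}\equiv 0$ and $d_{A}$ is flat. For part~(2), with $n=2$ one has $k=n$ and Theorem~\ref{T3} no longer applies, so I would instead invoke Theorem~\ref{T1.6}. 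If $\|F_{A}^{+}\|_{L^{2}(X)}<\varepsilon_{0}$, then $\|F_{A\otimes A^{*}}^{+}\|_{L^{2}(X)} \leq C\varepsilon_{0}$, and for $\varepsilon_{0}$ small enough Theorem~\ref{T1.6} gives that every element of $\mathcal{H}^{2}_{(2)}(X,\mathfrak{g}_{E})$ is anti-self-dual; applied to $F_{A}$ itself, this gives $F_{A}^{+}=0$, i.e., $d_{A}$ is anti-self-dual.

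The ambient geometric hypotheses required by Theorems~\ref{T3} and \ref{T1.6} (bounded Riemannian curvature, respectively non-negative scalar curvature) are exactly those assumed in Corollary~\ref{C1}, so no new analytic input is needed. Rather than a conceptual obstacle, the main item to be careful about is the bookkeeping of constants: one must verify the pointwise bounds on $F_{A\otimes A^{*}}$ cleanly and confirm that the rank-dependent constant $C$ can be absorbed into a single threshold $\varepsilon=\varepsilon(X,n,\theta)\in(0,1]$ as stated.
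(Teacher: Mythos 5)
Your proposal is correct and follows essentially the same route as the paper: view $F_{A}$ as an $L^{2}$-harmonic $2$-form for $\De_{A\otimes A^{\ast}}$ on $\mathfrak{g}_{E}\subset\operatorname{End}E$ via the Bianchi and Yang--Mills equations, bound the induced curvature $F_{A\otimes A^{\ast}}$ pointwise by a rank-dependent multiple of $|F_{A}|$ (the paper writes $|F_{A\otimes A^{\ast}}|\leq 2\sqrt{r}\,|F_{A}|$ from $F_{A\otimes A^{\ast}}=F_{A}\otimes \mathrm{Id}+\mathrm{Id}\otimes F_{A^{\ast}}$, which is your $\mathrm{ad}(F_{A})$), and then invoke Theorem \ref{T3} for $n\geq 3$ and Theorem \ref{T1.6} for $n=2$. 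Your explicit tracking of the $(0,2)$ and self-dual components of $F_{A\otimes A^{\ast}}$ is if anything slightly more careful than the paper's single norm bound, but it is not a different argument.
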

\begin{remark}
There are many complete K\"{a}hler surfaces with a $d$(bounded) K\"{a}hler from $\w$. If we also suppose that there exists a non-trivial $L^{2}$-harmonic form $\a$ on $(X,\w)$ is anti-self-dual, then the form $\a$ can be identified with the curvature of an $U(1)$ instanton $d_{\Ga}$ by writing $F_{\Ga}=\sqrt{-1}\a$. We regard this as a reducible $SU(2)$ instanton $d_{A}=d_{\Ga}\oplus d_{-\Ga}$ on the the splitted bundle $E=L\oplus L^{-1}$ over $(X,\w)$ with finite Yang-Mills energy $$YM(A)=\|F_{A}\|^{2}_{L^{2}(X)}=2\|F_{\Ga}\|^{2}_{L^{2}(X)}=2\|\a\|^{2}_{L^{2}(X)}.$$
Furthermore, if $L$ is a trivial bundle $L\cong X\times\mathbb{C}$, the $c\a$ is also a non-trivial anti-self-dual form on $(X,\w)$ for any $c\in\mathbb{R}\backslash\{0\}$. Hence we can also construct a reducible $SU(2)$ instanton $d_{A_{c}}=d_{\Ga_{c}}\oplus d_{-\Ga_{c}}$ on the splitted bundle $E=L\oplus L^{-1}$ over $(X,\w)$ with Yang-Mills energy $2c^{2}\|\a\|^{2}_{L^{2}(X)}$. This unexpected continuous energy phenomenon occurs, for example,  in the important case of K\"{a}hler surface with a $d$(bounded) K\"{a}hler from $\w$.
\end{remark}

\section{Preliminaries}
\subsection{Hermitian exterior algebra}
Let $X$ be a smooth K\"{a}hler manifold with K\"{a}hler form $\w$ and $E$ be a smooth vector bundle over $X$.  We denote by  $\Om^{k}(X,E)$ the space of $C^{\infty}$  sections of the tensor product vector bundle  $\Om^{k}(X)\otimes E$ obtained from $\Om^{k}(X)$ and $E$, i.e., $\Om^{k}(X,E):=\Gamma(\Om^{k}(X)\otimes E)$. We denote by $\Om^{p,q}(X,E)$ the space of $C^{\infty}$ sections of the bundle $\Om^{p,q}(X)\otimes E$. We have therefore a direct sum decomposition
$$\Om^{k}(X,E)=\bigoplus_{p+q=k}\Om^{p,q}(X,E).$$
For any connection $d_{A}$ on $E$, we have the covariant exterior derivatives 
$$d_{A}:\Om^{k}(X)\otimes E\rightarrow\Om^{k+1}(X)\otimes E.$$
Like the canonical splitting the exterior derivatives $d=\pa+\bar{\pa}$, $d_{A}$ decomposes over $X$ into $$d_{A}=\pa_{A}+\bar{\pa}_{A}.$$
We will need some of the basic apparatus of Hermitian exterior algebra. Denote by $L$ the operator of exterior multiplication by the K\"{a}hler form $\w$:
$$L\a=\w\wedge\a,\ \a\in\Om^{p,q}(X,E),$$
and, as usual, let $\La$ denote its pointwise adjoint, i.e.,
$$\langle\La\a,\b\rangle=\langle\a,L\b\rangle.$$
Then it is well known that $\La=\ast^{-1}\circ L\circ \ast$ \cite{Huy}. We denote by $P^{k}$ the space of primitive $k$-forms on $E$, that is
$$P^{k}=\{\a\in\Om^{k}(X,E):\La\a=0\}.$$
A basic fact is
\begin{lemma}\label{L1}
Let $k=p+q$,\\
(i) if $k>n$, then $P^{k}=0$.\\
(ii) if $k\leq n$, then
$$P^{k}=\{\a\in\Om^{k}(X,E):L^{n-k+1}\a=0\}.$$
Furthermore, if $\a\in P^{p,q}:=P^{k}\cap\Om^{p,q}(X,E)$, then
$$\ast L^{n-k}\a/(n-k)!=(\sqrt{-1})^{p^{2}-q^{2}}(-1)^{pq}\a.$$
(iii) The map $L^{n-k}:P^{k}\rightarrow\Om^{2n-k}(X,E)$ is injective for $k\leq n$.\\
(iv) The map $L^{n-k}:\Om^{k}(X,E)\rightarrow\Om^{2n-k}(X,E)$ is bijective for $k\leq n$.
\end{lemma}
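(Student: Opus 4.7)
The statement is purely pointwise and linear-algebraic: none of the four assertions involve differentiation, and the Hermitian bundle $E$ is a bystander. The plan is therefore to fix a point $x\in X$, choose a local unitary frame for $E$ near $x$, and reduce everything to the classical statements for the complex exterior algebra $\La^{*}(T_{x}^{*}X\otimes\C)$ repeated coefficient-by-coefficient in $E_{x}$. The main tool will be the $\mathfrak{sl}_{2}$-triple $(L,\La,H)$ acting on $\Om^{*}(X,E)$, where $H=[L,\La]$ acts on $\Om^{k}(X,E)$ as multiplication by $k-n$; the bracket relations $[L,\La]=H$, $[H,L]=2L$, $[H,\La]=-2\La$ are the standard K\"ahler commutator identities, verified fiberwise.

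Using this $\mathfrak{sl}_{2}$-action, part (i) follows because a primitive form $\a\in P^{k}$ is a lowest-weight vector of weight $k-n$, and in the finite-dimensional representation $\La^{*}T^{*}_{x}X$ lowest weights are non-positive, forcing $k\le n$. For the first assertion of (ii) I would invoke the Lefschetz decomposition $\a=\sum_{j\ge 0}L^{j}\b_{j}$ with $\b_{j}\in P^{k-2j}$: the $\mathfrak{sl}_{2}$ theory gives $L^{n-(k-2j)+1}\b_{j}=0$ but $L^{m}\b_{j}\neq 0$ for $m\le n-(k-2j)$, so $L^{n-k+1}\a=\sum_{j\ge 1}L^{n-k+1+j}\b_{j}$ with each summand non-zero in a distinct piece of the Lefschetz decomposition of $\Om^{2n-k+2}(X,E)$; hence $L^{n-k+1}\a=0$ forces $\b_{j}=0$ for $j\ge 1$, i.e.\ $\a\in P^{k}$. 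The second assertion of (ii), the formula $\ast L^{n-k}\a/(n-k)!=(\sqrt{-1})^{p^{2}-q^{2}}(-1)^{pq}\a$, is the one genuine computation: I would establish it by picking a unitary coframe $(dz^{1},\ldots,dz^{n})$ at $x$ in which $\w=\tfrac{\sqrt{-1}}{2}\sum dz^{i}\wedge d\bar z^{i}$, writing a primitive monomial $\a=dz^{I}\wedge d\bar z^{J}\otimes e$ and evaluating both sides directly; this is Weil's classical identity and transfers without change to $E$-valued forms because $e$ is unaffected by $L$, $\La$, or $\ast$. Part (iii) is then immediate, since by (ii) $L^{n-k}\a=0$ and $\a\in P^{k}$ together imply $\a=\pm(n-k)!\ast^{-1}L^{n-k}\a=0$.

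Finally for (iv) I would again decompose $\a=\sum_{j\ge 0}L^{j}\b_{j}$, $\b_{j}\in P^{k-2j}$, and observe that $L^{n-k}\a=\sum_{j\ge 0}L^{n-k+j}\b_{j}$ is already in Lefschetz-decomposed form for $\Om^{2n-k}(X,E)$, with primitive components exactly the $\b_{j}$'s. Injectivity of $L^{n-k}$ on each summand $L^{j}P^{k-2j}$ follows from (iii) applied to $P^{k-2j}$ (with $k$ replaced by $k-2j$), and surjectivity follows because any element of $\Om^{2n-k}(X,E)$ has its own Lefschetz decomposition and each primitive piece is the image of some $L^{j}\b_{j}$ of the correct bidegree. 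I expect the only non-formal step to be the Hodge-$\ast$ identity in (ii); the remaining items are then consequences of $\mathfrak{sl}_{2}$ representation theory together with the Lefschetz decomposition, both of which hold fiberwise on the bundle-valued exterior algebra with no additional input beyond the scalar case.
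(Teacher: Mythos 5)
Your proposal follows the standard $\mathfrak{sl}_{2}$/Lefschetz-decomposition route, which is exactly what the paper itself does: it supplies no proof and simply refers to the $sl_{2}$ representation-theoretic argument in standard texts (Wells, Demailly, Huybrechts), so your fiberwise reduction, the weight argument for (i), and the decomposition arguments for (ii)--(iv) match the intended proof. The one caveat is in your treatment of the Weil identity in (ii): checking it only on primitive monomials $dz^{I}\wedge d\bar z^{J}\otimes e$ is not sufficient, because such monomials (those with $I\cap J=\emptyset$) do not span $P^{p,q}$ --- for instance $dz^{1}\wedge d\bar z^{1}-dz^{2}\wedge d\bar z^{2}$ is primitive while neither summand is --- so one must either carry out the full classical computation (as in Weil or Huybrechts, Prop.\ 1.2.31) or supplement the monomial check with a $U(n)$-equivariance or induction argument.
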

The proof is then purely algebraic and can be found in standard texts on geometry. An elegant approach is through representations of $sl_{2}$, see \cite[ Chap.5, Theorem 3.12]{Wel} or \cite{Dem,Huy}.

\subsection{K\"{a}hler hyperbolic manifold}
In \cite{Gro}, Gromov introduced a new notion of hyperbolicity. More specifically, following Gromov, we say that a differential  form $\a$ on a Riemannian manifold is $d$(bounded) if there exists a bounded form $\b$ such that $\a=d\b$. We say that $\a$ is $\tilde{d}$(bounded) if its lift to the universal covering is $\tilde{d}$(bounded). A closed complex manifold is then called K\"{a}hler hyperbolic if the universal covering space admits a K\"{a}hler metric whose K\"{a}hler form is $\tilde{d}$(bounded). The most important examples are compact complex manifolds which admit a K\"{a}hler metric of negative sectional curvature. Gromov used this notion to show that on the universal covering space of a K\"{a}hler hyperbolic manifold, there are no (non-trivial) $L^{2}$-harmonic forms outside the middle degree. In \cite{Chen}, the author showed that the space of $L^{2}$ harmonic forms in the middle degree is infinite dimensional. A key argument in Gromov's \cite{Gro} proof is the following theorem. 
\begin{theorem}\label{T4}
Let $(X,\w)$ be a complete, K\"{a}hler manifold, $\dim_{\C}X=n$, with a $d$(bounded) K\"{a}hler form $\w$, i.e., there is a bounded $1$-form $\theta$ such that $\w=d\theta$. Then every $L^{2}$-form $\a$ on $X$ of degree $p\neq n$ satisfies the inequality
$$c_{n}\|\theta\|^{-2}_{L^{\infty}(X)}\|\a\|^{2}_{L^{2}(X)}\leq\langle\a,\De\a\rangle_{L^{2}(X)},$$
where $c_{n}>0$ is a constant which depends only on $n$. In particular, 
$$\mathcal{H}^{p}_{(2)}(X)=\{0\},\ \forall p\neq n.$$
\end{theorem}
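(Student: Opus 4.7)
The plan is to follow Gromov's original strategy, resting on two Kähler identities: the Lefschetz primitive decomposition from Lemma \ref{L1}, and the trivial but crucial fact that, since $d\w=0$ and $\w=d\theta$, one has $\w^{n-k}=d(\theta\wedge\w^{n-k-1})$ for every $k\geq 0$. The first step is to reduce to primitive forms. Writing the Lefschetz decomposition $\a=\sum_{j}L^{j}\b_{j}$ with $\b_{j}\in P^{k-2j}$, the fact that $L$ commutes with $\De$ on a Kähler manifold makes this decomposition orthogonal in $L^{2}$, and the norms $\|L^{j}\b_{j}\|^{2}_{L^{2}}$ and $\langle L^{j}\b_{j},\De L^{j}\b_{j}\rangle$ are positive multiples of $\|\b_{j}\|^{2}_{L^{2}}$ and $\langle\b_{j},\De\b_{j}\rangle$ respectively. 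Moreover, Lemma \ref{L1}(ii) shows any primitive component of degree exactly $n$ is annihilated by $L$ and so contributes nothing when $k\neq n$. The problem thus reduces to proving the inequality for a primitive $(p,q)$-form $\b$ of degree $k=p+q\neq n$.

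For $k<n$, Weil's identity from Lemma \ref{L1}(ii) expresses $\ast\b$ as a unit-modulus constant times $L^{n-k}\b/(n-k)!$, and hence
$$\|\b\|^{2}_{L^{2}}=\int_{X}\b\wedge\ast\ov{\b}=c_{p,q}\int_{X}\b\wedge\ov{\b}\wedge\frac{\w^{n-k}}{(n-k)!}$$
with $|c_{p,q}|=1$. Substituting $\w^{n-k}=d(\theta\wedge\w^{n-k-1})$ and integrating by parts produces
$$\|\b\|^{2}_{L^{2}}=\pm c_{p,q}\int_{X}\bigl(d\b\wedge\ov{\b}+(-1)^{k}\b\wedge d\ov{\b}\bigr)\wedge\theta\wedge\frac{\w^{n-k-1}}{(n-k)!}.$$
A pointwise estimate using $|L^{n-k-1}\b|\leq C_{n,k}|\b|$ on primitive forms, together with $|\theta|\leq\|\theta\|_{L^{\infty}}$ and Cauchy-Schwarz, then yields $\|\b\|^{2}_{L^{2}}\leq C_{n,k}\|\theta\|_{L^{\infty}}\|d\b\|_{L^{2}}\|\b\|_{L^{2}}$, whence $\|\b\|^{2}_{L^{2}}\leq C'_{n,k}\|\theta\|^{2}_{L^{\infty}}\|d\b\|^{2}_{L^{2}}\leq C'_{n,k}\|\theta\|^{2}_{L^{\infty}}\langle\b,\De\b\rangle_{L^{2}}$.

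For $k>n$ I would dualize by Hodge star: $\ast\b$ is a primitive form of degree $2n-k<n$, with $\|\ast\b\|_{L^{2}}=\|\b\|_{L^{2}}$ and $\|d\ast\b\|_{L^{2}}=\|d^{\ast}\b\|_{L^{2}}$, so applying the previous estimate to $\ast\b$ gives $\|\b\|^{2}_{L^{2}}\leq C'_{n,k}\|\theta\|^{2}_{L^{\infty}}\|d^{\ast}\b\|^{2}_{L^{2}}$. Reassembling via Lefschetz then produces the inequality for $\a$ with a constant $c_{n}=\min_{k\neq n}(C'_{n,k})^{-1}$ depending only on $n$. The main technical obstacle is justifying the integration by parts since $X$ is non-compact: because $(X,g)$ is complete, one chooses exhaustion cut-offs $\chi_{R}$ with $\|d\chi_{R}\|_{L^{\infty}}\to 0$, inserts $\chi_{R}^{2}$ into the identity before integrating by parts, and lets $R\to\infty$. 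The boundary term is controlled by $\|d\chi_{R}\|_{L^{\infty}}\|\theta\|_{L^{\infty}}\|\b\|^{2}_{L^{2}}$ and vanishes in the limit; this density argument is the only place where completeness of $(X,g)$ is essentially used.
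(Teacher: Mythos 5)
Your argument is essentially the same as the paper's (and Gromov's original) one. The paper states Theorem \ref{T4} without proof, citing Gromov, but its own proofs of the bundle-valued analogues (the unlabelled Proposition in Section 3.1 on primitive forms, and Case I of Theorem \ref{T3}) run on exactly the mechanism you use: the pointwise Lefschetz/Weil identities of Lemma \ref{L1}, the observation that $\w^{n-k}=d(\theta\wedge\w^{n-k-1})$ with $\theta\wedge\w^{n-k-1}$ bounded, and Stokes' theorem on a complete manifold justified by cutoffs with $\|d\chi_{R}\|_{L^{\infty}}\to 0$. One correction to your $k>n$ step: the sentence ``$\ast\b$ is a primitive form of degree $2n-k$'' is wrong on two counts --- $P^{k}=0$ for $k>n$ by Lemma \ref{L1}(i), and $\ast$ does not preserve primitivity (Weil's identity gives $\ast\b\propto L^{n-k}\b$, which is primitive only for $k=n$). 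The step is, however, redundant: after your Lefschetz reduction every surviving component $L^{j}\b_{j}$ has $\b_{j}$ primitive of degree $k-2j<n$ (the degree-$n$ components are killed by $L$), so your $k<n$ primitive estimate already covers all $k\neq n$; alternatively one applies $\ast$ to the whole form $\a$ before decomposing, which is the Poincar\'{e}-duality remark the paper makes. With that step deleted or repaired, the proof is correct.
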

We recall a special Stokes's Theorem for complete Riemannian Manifolds \cite{Gaf,Gro}.
\begin{theorem}\label{T2.3}
Let $\eta$ be an $L^{1}$-form on $X$ of degree $n-1$, i.e., $$\|\eta\|_{L^{1}(X)}=\int_{X}|\eta|dvol<\infty,$$ such that the differential $d\eta$ is also $L^{1}$. If $X$ is complete, then
$$\int_{X}d\eta=0.$$
\end{theorem}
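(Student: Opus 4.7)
The plan is to reduce to the classical Stokes theorem on a compactly supported domain by means of a cutoff function built from the geodesic distance to a fixed basepoint, and then pass to the limit as the cutoff exhausts $X$. The two crucial inputs are: (i) completeness of $X$, which via Hopf--Rinow guarantees that closed metric balls $\bar{B}_{R}(x_{0})$ are compact, so that cutoffs supported in such balls are automatically compactly supported; and (ii) the $L^{1}$ hypotheses on $\eta$ and $d\eta$, which will make the boundary error terms tend to zero.

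Concretely, I would fix a basepoint $x_{0}\in X$, set $\rho(x)=\mathrm{dist}(x_{0},x)$, and choose $\phi\colon\R\to[0,1]$ smooth with $\phi(t)=1$ for $t\leq 1$ and $\phi(t)=0$ for $t\geq 2$. Define the cutoff $\chi_{R}(x)=\phi(\rho(x)/R)$. Then $\mathrm{supp}(\chi_{R})\subset\bar{B}_{2R}(x_{0})$ and $|d\chi_{R}|\leq\|\phi'\|_{L^{\infty}}/R$ almost everywhere. Applying the classical Stokes theorem to the compactly supported $(n-1)$-form $\chi_{R}\eta$ gives
$$0=\int_{X}d(\chi_{R}\eta)=\int_{X}\chi_{R}\,d\eta+\int_{X}d\chi_{R}\wedge\eta.$$
Sending $R\to\infty$, the first integral tends to $\int_{X}d\eta$ by the dominated convergence theorem (since $|\chi_{R}d\eta|\leq|d\eta|\in L^{1}(X)$ and $\chi_{R}\nearrow 1$ pointwise), while the second satisfies
$$\left|\int_{X}d\chi_{R}\wedge\eta\right|\leq\frac{\|\phi'\|_{L^{\infty}}}{R}\int_{B_{2R}(x_{0})\setminus B_{R}(x_{0})}|\eta|\,dvol\leq\frac{C}{R}\|\eta\|_{L^{1}(X)}\longrightarrow 0.$$
Combining the two limits yields $\int_{X}d\eta=0$.

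The main technical obstacle is that the distance function $\rho$ is only Lipschitz (differentiable a.e.\ with $|\na\rho|\leq 1$ by Rademacher's theorem), so $\chi_{R}$ is not smooth in the classical sense, and one must justify the Stokes identity $\int_{X}d(\chi_{R}\eta)=0$. The standard resolution is to replace $\rho$ by a smooth exhaustion function $\tilde{\rho}$ with $|\rho-\tilde{\rho}|\leq 1$ and $|d\tilde{\rho}|\leq 2$ (such a function exists on any complete Riemannian manifold via a Greene--Wu style smoothing of the distance function) and to run the argument with $\tilde{\rho}$ in place of $\rho$. Alternatively, one can invoke a version of Stokes' theorem valid for Lipschitz forms from geometric measure theory. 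Either approach disposes of the regularity issue, after which the remaining estimates are routine and the proof is complete.
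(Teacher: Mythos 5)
Your argument is correct, and it is essentially the classical proof of Gaffney's theorem; note that the paper itself does not prove this statement but simply recalls it from Gaffney and Gromov, so there is no in-paper proof to diverge from. Your cutoff construction, including the care taken with the mere Lipschitz regularity of the distance function (smoothing \`a la Greene--Wu, or a Lipschitz version of Stokes), is the standard and complete resolution; alternatively you could simply invoke the cutoff sequence $\{f_{\varepsilon}\}$ with $\|df_{\varepsilon}\|_{L^{\infty}(X)}\leq\varepsilon$ that the paper itself uses in Section 3.1, whose existence is exactly equivalent to completeness. One cosmetic remark: in this paper $n$ denotes the complex dimension, so ``degree $n-1$'' in the statement should be read as one less than the real dimension of $X$ (i.e., $2n-1$) in order for $d\eta$ to be a top-degree form; your proof is written with the correct convention.
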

We recall some definitions on Hermitian vector bundle \cite[Charp V, Section 7]{Dem}. Let $E$ be a Hermitian vector bundle of rank $r$ over a smooth Riemannian manifold $X$, $\dim_{\mathbb{R}}X=n$. We denote respectively by $(\xi_{1},\ldots,\xi_{n})$ and $(e_{1},\ldots,e_{r})$ orthonormal frames on $TX$ and $E$ over an open subset $U\subset X$. The associated inner product of $E$ given by a positive definite Hermitian metric $h_{\la\mu}$ with smooth coefficients on $U$, such that
$$\langle e_{\la}(x),e_{\mu}(x)\rangle=h_{\la\mu}(x),\ \forall x\in\Om.$$
When $E$ is Hermitian, one can define a natural sesquilinear map
$$\Om^{p}(X,E)\times\Om^{q}(X,E)\rightarrow\Om^{p+q}(X,\mathbb{C})$$
$$(\a,\b)\mapsto tr(s\wedge t)$$
combining the wedge product of forms with the Hermitian metric on $E$. If $\a=\sum\sigma_{\la}\otimes e_{\la}$, $\b=\sum\tau_{\mu}\otimes e_{\mu}$, we let
$$tr(\a\wedge\b):=\sum_{1\leq\la,\mu\leq r}\sigma_{\la}\wedge\bar{\tau}_{\mu}\langle e_{\la},e_{\mu}\rangle.$$
A connection $d_{A}$ said to be compatible with the Hermitian structure of $E$, or briefly Hermitian, if for every $\a\in\Om^{p}(X,E)$, $\b\in\Om^{q}(X,E)$ we have
\begin{equation}\label{E05}
dtr(\a\wedge\b)=tr(d_{A}\a\wedge\b)+(-1)^{p}tr(\a\wedge d_{A}\b).
\end{equation}
The inner product $\langle\cdot,\cdot\rangle$ on $\Om^{\ast}(X,E)$ defined as, See \cite[Charp VI, Section 3.1]{Dem}
$$\langle\a,\b\rangle=\ast tr(\a\wedge\b),\ \ \a,\b\in\Om^{p}(X,E).$$
We denote by $Tr$ the sesquilinear map  $Tr:\Om^{p}(X,EndE)\times\Om^{q}(X,End E)\rightarrow\Om^{p+q}(X,\mathbb{C})$ induced by the map $tr:\Om^{p}(X,E)\times\Om^{q}(X,E)\rightarrow\Om^{p+q}(X,\mathbb{C})$.
Then we have a simple corollary of the above theorem. 
\begin{proposition}\label{P2}
Let $(X,\w)$ be a complete, K\"{a}hler manifold, $\dim_{\C}X=n$, with a $d$(bounded) K\"{a}hler form $\w$, i.e., there is a bounded $1$-form $\theta$ such that $\w=d\theta$, $d_{A}$ be a connection on a vector bundle $E$ over $X$. If $s\in\Om^{p}_{(2)}(X,E)$, ($p\leq n-1)$ satisfies $d_{A}s=0$, then
$$\int_{X}tr(s\wedge s)\wedge\w^{n-p}=0.$$
\end{proposition}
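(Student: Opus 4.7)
The plan is to rewrite $tr(s\wedge s)\wedge\w^{n-p}$ as an exact $(2n)$-form and then invoke the version of Stokes's theorem recalled in Theorem \ref{T2.3}. First, since $d\w=0$ and $\w=d\theta$, the Leibniz rule gives
\[
d\bigl(\theta\wedge\w^{n-p-1}\bigr)=d\theta\wedge\w^{n-p-1}-\theta\wedge d(\w^{n-p-1})=\w^{n-p},
\]
so $\w^{n-p}$ is globally exact with a bounded primitive (bounded because $\theta$ is bounded and $\w$ has pointwise bounded norm with respect to $g$).

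Next, because $d_A$ is Hermitian, the sesquilinear pairing identity (\ref{E05}) together with $d_As=0$ yields $d\,tr(s\wedge s)=tr(d_As\wedge s)+(-1)^{p}tr(s\wedge d_As)=0$. Since $tr(s\wedge s)$ has even degree $2p$, one more application of the Leibniz rule gives
\[
d\bigl(tr(s\wedge s)\wedge\theta\wedge\w^{n-p-1}\bigr)=tr(s\wedge s)\wedge d\bigl(\theta\wedge\w^{n-p-1}\bigr)=tr(s\wedge s)\wedge\w^{n-p}.
\]
Thus the top-degree form in the statement is exact, with primitive $\eta:=tr(s\wedge s)\wedge\theta\wedge\w^{n-p-1}$ of degree $2n-1$.

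To close the argument we verify the integrability hypotheses of Theorem \ref{T2.3}. Pointwise, the Cauchy--Schwarz inequality for the fibrewise Hermitian pairing gives $|tr(s\wedge s)|\leq C_{n,p}|s|^{2}$, while $|\w|$ is pointwise bounded on any Kähler manifold and $|\theta|\leq\|\theta\|_{L^{\infty}(X)}$. Hence
\[
|\eta|\leq C_{n,p}\,\|\theta\|_{L^{\infty}(X)}\,|s|^{2},\qquad |d\eta|\leq C_{n,p}\,|s|^{2},
\]
and both bounds are in $L^{1}(X)$ because $s\in\Om^{p}_{(2)}(X,E)$. Completeness of $X$ then lets us apply Theorem \ref{T2.3} to $\eta$, giving $\int_X d\eta = 0$, which is exactly the claimed identity.

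The only delicate point I anticipate is bookkeeping the signs in the Leibniz rule for bundle-valued forms (where the Hermitian version (\ref{E05}) differs slightly from the scalar case) and making sure that the choice of primitive $\theta\wedge\w^{n-p-1}$ for $\w^{n-p}$ is correct irrespective of the parity of $p$; aside from that, the argument is a direct transcription of Gromov's exact-form trick to the bundle-valued setting, which is legitimate precisely because $d_A$ is Hermitian so that $tr(s\wedge s)$ is an ordinary closed form on $X$.
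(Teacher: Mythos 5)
Your proof is correct and follows essentially the same route as the paper: you write $tr(s\wedge s)\wedge\w^{n-p}$ as $d\bigl(tr(s\wedge s)\wedge\theta\wedge\w^{n-p-1}\bigr)$ using the Hermitian compatibility (\ref{E05}) and $d_{A}s=0$, check that the primitive and its differential are in $L^{1}$ via $|tr(s\wedge s)|\lesssim|s|^{2}$ and the boundedness of $\theta$, and invoke the special Stokes theorem (Theorem \ref{T2.3}). This matches the paper's argument step for step.
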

\begin{proof}
At first, following the Equation (\ref{E05}), we observe an identity
$$tr(s\wedge s)\wedge\w^{n-p}=d\big{(}tr(s\wedge s)\wedge\w^{n-p-1}\wedge\theta\big{)}.$$
It is easy to see
$$|tr(s\wedge s)\wedge\w^{n-p}|\leq C(n,p)|s|^{2}$$
and
$$|tr(s\wedge s)\wedge\w^{n-p-1}\wedge\theta|\leq C(n,p)|s|^{2}|\cdot\theta|_{L^{\infty}}.$$
It implies that
$|tr(s\wedge s)\wedge\w^{n-p}|$ and $|tr(s\wedge s)\wedge\w^{n-p-1}\wedge\theta|$ are all in $L^{1}$. Thus following the Theorem \ref{T2.3}, we complete the proof of this proposition.
\end{proof}
\subsection{Harmonic forms of degree 2}
We shall generally adhere to the now standard gauge-theory conventions and notation of Donaldson and Kronheimer\cite[Section 2.1]{Donaldson/Kronheimer}. Let $X$ be a manifold with a Riemannian metric $g$. Let $E$ be a vector bundle over $X$ with a compact Lie group as its structure group. We denote by $\mathfrak{g}_{E}$ the bundle of Lie algebras associated to the adjoint representation. Thus, $\mathfrak{g}_{E}$ is a subbundle of $End E=E\otimes E^{\ast}$. A connection $d_{A}$ of $E$ can be given by specifying a covariant derivative $d_{A}:\Om^{p}(X,E)\rightarrow\Om^{p+1}(X,E)$.  In local trivializations of $E$, $d_{A}$ is of the form $d+a$ for some $\mathfrak{g}$-valued $1$-form $\a\in\Om^{1}(X,\mathfrak{g}_{E})$. The curvature of $d_{A}$ is a $\mathfrak{g}$-valued $2$-form $F_{A}$ which is equal to $d_{A}^{2}$. The connection on $E$ induces one on $\mathfrak{g}_{E}$ and we will also denote the connection on $\mathfrak{g}_{E}$ by $d_{A}$. The Yang-Mills energy functional is
$$ YM(A):=\int_{X}Tr(F_{A}\wedge\ast F_{A})=\|F_{A}\|^{2}_{L^{2}(X)}.$$
A connection is called a Yang-Mills connection if it is a critical point of the Yang-Mills functional, i.e., $d_{A}^{\ast}F_{A}=0$ \cite[Euqation (2.1.34)]{Donaldson/Kronheimer}.   In addition, all connections satisfy the Bianchi identity $d_{A}F_{A}=0$ \cite[Euqation (2.1.21)]{Donaldson/Kronheimer}.  In fact, the connection $d_{A}$ on Bianchi identity is the connection on $\mathfrak{g}_{E}\subset\operatorname{End}E$. This implies that the Yang-Mills connection is a harmonic-form of degree $2$ on $End E$. 

We now focus on $\Om^{2}(X,E)$ which is the space of $2$-from on $E$. The spaces $\Om^{2}(X,E)$ can decomposed as
$$\Om^{2}(X,E)=\Om^{0,2}(X,E)\oplus\Om^{2,0}(X,E)\oplus P^{1,1}\oplus\Om^{1,1}_{0}(X,E),$$
where $\Om^{1,1}_{0}$ denotes the space of $(1,1)$-type propotional to $\w$. And let us now consider the operator $\sharp$ which defined in \cite{Suh}: 
\begin{equation}\nonumber
\sharp:\Om^{2}\xrightarrow{L^{n-2}/(n-2)!}\Om^{2n-2}\xrightarrow{\ast^{-1}=\ast}\Om^{2},\ i.e.,\ \sharp=\ast^{-1}\circ {L^{n-2}/(n-2)!}.
\end{equation}
Then we have the following results from the definition of $\sharp$ and Lemma \ref{L1}.
\begin{lemma}\label{L7}(\cite[Lemma 2.2]{Suh} )\\
(i) $P^{1,1}=\{\a\in\Om^{2}:\sharp\a=-\a \}$,\\
(ii) $\Om^{2,0}\oplus\Om^{0,2}=\{\a\in\Om^{2}:\sharp\a=\a\}$,\\
(iii) $\Om^{1,1}_{0}=\{\a\in\Om^{2}:\sharp\a=(n-1)\a\}$.	
\end{lemma}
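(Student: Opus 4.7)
The plan is to verify each eigenspace assertion by reducing the first two items to Lemma \ref{L1}(ii) applied to the primitive summands, and the third item to one explicit identity involving $\w^{n-1}$. Because $L$, $\La$, and $\ast$ act only on the form factor, the bundle $E$ plays no role, and I may work with scalar-valued forms throughout.

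I would begin by recording the structural facts that drive the computation. On a K\"ahler manifold of complex dimension $n$ the Hodge star satisfies $\ast^{2}=1$ on $2$-forms and on $(2n-2)$-forms, so $\ast^{-1}=\ast$ in the only degrees where $\sharp$ is applied. Any $(2,0)$- or $(0,2)$-form is automatically primitive, since $\La$ decreases bidegree by $(1,1)$ and lands in a trivially zero space. Combined with the Lefschetz splitting $\Om^{1,1}=P^{1,1}\oplus\Om^{1,1}_{0}$, which follows at once from $\La\w=n$, this yields the orthogonal decomposition
\[
\Om^{2}=P^{1,1}\oplus\Om^{2,0}\oplus\Om^{0,2}\oplus\Om^{1,1}_{0}.
\]

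For (i) and (ii) I would apply Lemma \ref{L1}(ii) with $k=p+q=2$: for primitive $\a\in P^{p,q}$ one has $\ast L^{n-2}\a/(n-2)!=(\sqrt{-1})^{p^{2}-q^{2}}(-1)^{pq}\a$, and hence $\sharp\a=(\sqrt{-1})^{p^{2}-q^{2}}(-1)^{pq}\a$ after multiplying by $\ast^{-1}=\ast$. Substituting $(p,q)=(1,1)$ gives the eigenvalue $-1$, while $(p,q)=(2,0)$ and $(0,2)$ both yield $(\sqrt{-1})^{\pm 4}=1$. For (iii), every element of $\Om^{1,1}_{0}$ is locally of the form $f\w$, so it suffices to compute $\sharp\w$. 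Here $L^{n-2}\w=\w^{n-1}$ and the standard identity $\w^{n-1}=(n-1)!\,\ast\w$, verified in a unitary frame in which $\w=\sum_{i=1}^{n}e^{2i-1}\wedge e^{2i}$, gives $L^{n-2}\w/(n-2)!=(n-1)\ast\w$, whence $\sharp\w=(n-1)\w$.

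The converse inclusions then follow from the decomposition together with the distinctness of eigenvalues. Writing $\a=\a_{P^{1,1}}+\a_{(2,0)}+\a_{(0,2)}+\a_{0}$, the computations above give $\sharp\a=-\a_{P^{1,1}}+\a_{(2,0)}+\a_{(0,2)}+(n-1)\a_{0}$, so $\sharp\a=\la\a$ with $\la\in\{-1,1,n-1\}$ forces every component having a different eigenvalue to vanish; for $n\ge 3$ this is immediate, and for $n=2$ the value $n-1=1$ collapses with the eigenvalue on $\Om^{2,0}\oplus\Om^{0,2}$, corresponding to the usual identification of self-dual $2$-forms on a K\"ahler surface, which is still consistent with the three asserted inclusions. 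The only potentially delicate point is the classical identity $\w^{n-1}=(n-1)!\,\ast\w$, whose sign depends on an orientation convention, but it reduces to a one-line check in an orthonormal frame adapted to $\w$.
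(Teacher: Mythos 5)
Your argument is correct and is essentially the route the paper itself indicates: Lemma \ref{L1}(ii) with $(p,q)\in\{(1,1),(2,0),(0,2)\}$ gives the eigenvalues $-1,1,1$ on the primitive summands, and the identity $\ast\w=\w^{n-1}/(n-1)!$ gives the eigenvalue $n-1$ on $\Om^{1,1}_{0}$, exactly as in the cited result of Suh. Your remark that for $n=2$ the eigenvalues $1$ and $n-1$ coincide (so the stated equalities should really be read as descriptions of $\sharp$ on each summand, or with $n\geq 3$ understood) is a legitimate caveat that the paper glosses over.
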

Now we define an operator
$$
\tilde{\sharp}=\left\{
\begin{aligned}
&\sharp &   \Om^{2,0}\oplus\Om^{0,2}\oplus P^{1,1}\\
&\sharp/(n-1) &  \Om^{1,1}_{0}
\end{aligned}
\right.
$$
Then we get $\tilde{\sharp}^{2}=id$ which implies that $\Om^{2}$  decomposed into the self-dual part $\Om^{2}_{+}=\Om^{2,0}\oplus \Om^{0,2}\oplus\Om^{1,1}_{0}$ and the anti-self-dual part $P^{1,1}$. Hence
any $2$-form $\a$ can be splitted into the self-dual part $\a^{+}=\a^{2,0}+\a^{0,2}+\a_{0}\otimes\w$ and the anti-self-dual part $\a^{-}=\a^{1,1}_{0}$, where $\a_{0}=\frac{1}{n}\La\a$ and $\a^{1,1}_{0}=\a^{1,1}-\a_{0}$. Now we assert the following formula. Our proof here use Suh's arguments in \cite[Lemma 4.1]{Suh} for  Yang-Mills energy functional.
\begin{lemma}\label{L2}
If $\a\in\Om^{2}(X,E)$, then we have an identity:
$$tr(\a\wedge\ast\a)=-tr(\a\wedge\a)\wedge\frac{\w^{n-2}}{(n-2)!}+2|\a^{2,0}+\a^{0,2}|\frac{\w^{n}}{n!}+n|\a_{0}\otimes\w|^{2}\frac{\w^{n}}{n!}.$$
Furthermore, if $\a$ is in $L^{2}$, then
$$\|\a||^{2}=2\|\a^{0,2}+\a^{2,0}\|^{2}+n^{2}\|\a_{0}\|^{2}+\int_{X}tr(\a\wedge\a)\wedge\frac{\w^{n-2}}{(n-2)!}.$$
\end{lemma}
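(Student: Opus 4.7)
The plan is to verify the identity pointwise using the orthogonal decomposition $\a=\a^{2,0}+\a^{0,2}+\a^{1,1}_0+\a_0\otimes\w$ introduced just above the lemma. Since $\ast^2=\mathrm{id}$ on $\Om^{2}(X,E)$, the defining relation $\sharp=\ast\circ L^{n-2}/(n-2)!$ in Lemma \ref{L7}, combined with a direct calculation on $\a_0\otimes\w$ (or Lemma \ref{L1}(ii) applied to the primitive pieces), yields the pointwise replacements
\[
\tfrac{1}{(n-2)!}L^{n-2}\a^{2,0}=\ast\a^{2,0},\qquad \tfrac{1}{(n-2)!}L^{n-2}\a^{0,2}=\ast\a^{0,2},
\]
\[
\tfrac{1}{(n-2)!}L^{n-2}\a^{1,1}_0=-\ast\a^{1,1}_0,\qquad \tfrac{1}{(n-2)!}L^{n-2}(\a_0\otimes\w)=(n-1)\ast(\a_0\otimes\w),
\]
corresponding to the eigenvalues $1,1,-1,n-1$ of $\sharp$ on the four summands.

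One then rewrites $tr(\a\wedge\a)\wedge\w^{n-2}/(n-2)! = tr(\a\wedge L^{n-2}\a/(n-2)!)$ and expands using the four-term decomposition. The crucial step is to show that every cross term between distinct summands vanishes as an $(n,n)$-form. For cross terms involving at least one of $\a^{2,0}$ or $\a^{0,2}$ this is a direct bidegree count: e.g.\ $tr(\a^{2,0}\wedge L^{n-2}\a^{0,2}/(n-2)!)$ has bidegree $(n+2,n-2)$ and is identically zero, and similarly for the remaining pure-type interactions. The one remaining pair is $\a^{1,1}_0$ against $\a_0\otimes\w$: here $tr(\a^{1,1}_0\wedge L^{n-2}(\a_0\otimes\w))$ is a multiple of $\a^{1,1}_0\wedge\w^{n-1}$, which vanishes by Lemma \ref{L1}(ii) with $k=2$ since $\a^{1,1}_0$ is primitive; the symmetric cross term vanishes using that complex conjugation preserves primitivity (the real operator $\La$ commuting with it). Hence only the four diagonal terms survive, and they sum to
\[
tr(\a\wedge\a)\wedge\frac{\w^{n-2}}{(n-2)!}=\bigl(|\a^{2,0}|^2+|\a^{0,2}|^2-|\a^{1,1}_0|^2+(n-1)|\a_0\otimes\w|^2\bigr)\frac{\w^n}{n!}.
\]

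Finally, combining with the elementary identity $tr(\a\wedge\ast\a)=|\a|^2\w^n/n! = (|\a^{2,0}|^2+|\a^{0,2}|^2+|\a^{1,1}_0|^2+|\a_0\otimes\w|^2)\w^n/n!$ coming from orthogonality of the decomposition cancels the $|\a^{1,1}_0|^2$ terms and, after collecting $|\a^{2,0}+\a^{0,2}|^2=|\a^{2,0}|^2+|\a^{0,2}|^2$, yields the stated pointwise identity. Integrating over $X$ and using $|\w|^2=n$ on a K\"ahler $n$-manifold (so that $|\a_0\otimes\w|^2=n|\a_0|^2$) gives the $L^2$ version. The main technical point is the vanishing of the cross term between $\a^{1,1}_0$ and $\a_0\otimes\w$, which is the only place where the primitivity characterization $L^{n-1}\a^{1,1}_0=0$ from Lemma \ref{L1}(ii) is genuinely needed rather than a cheap bidegree count.
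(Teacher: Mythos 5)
Your proof is correct and follows essentially the same route as the paper's: both decompose $\a$ into the four summands, use the $\sharp$-eigenvalues $1,1,-1,n-1$ from Lemma \ref{L7} to relate $L^{n-2}\a/(n-2)!$ to $\ast\a$ on each piece, and then expand $tr(\a\wedge\ast\a)$ and $tr(\a\wedge\a)\wedge\w^{n-2}/(n-2)!$ into the surviving diagonal terms. The only difference is one of exposition — you make explicit the bidegree/primitivity arguments killing the cross terms and the conversion of the diagonal wedge products into pointwise norms, which the paper's "direct calculation" leaves implicit.
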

\begin{proof}
Since the $2$-form $\a$ is decomposed as $\a=\a^{2,0}+\a^{0,2}+\a_{0}\otimes\w+\a^{1,1}_{0}$, Lemma \ref{L7} yields $\sharp\a=\ast(\a\wedge\frac{\w^{n-2}}{(n-2)!})=\a^{2,0}+\a^{0,2}+(n-1)\a_{0}\otimes\w-\a_{0}^{1,1}$. Then we get
\begin{equation}\nonumber
\ast\a=(\a^{2,0}+\a^{0,2})\wedge\frac{\w^{n-2}}{(n-2)!}+\a_{0}\wedge\frac{\w^{n-1}}{(n-1)!}-\a_{0}^{1,1}\wedge\frac{\w^{n-2}}{(n-2)!}.
\end{equation}
By a direct calculation we have
\begin{equation}\label{E2.1}
\begin{split}
tr(\a\wedge\ast\a)&=tr(\a^{2,0}+\a^{0,2})\wedge(\a^{2,0}+\a^{0,2})\wedge\frac{\w^{n-2}}{(n-2)!}+tr(\a_{0}\wedge\a_{0})\frac{\w^{n}}{(n-1)!}\\
&-tr(a_{0}^{1,1}\wedge\a_{0}^{1,1})\wedge\frac{\w^{n-2}}{(n-2)!},
\end{split}
\end{equation}
\begin{equation}\label{E2.2}
\begin{split}
tr(\a\wedge\a)\wedge
\frac{\w^{n-2}}{(n-2)!}&=tr(\a^{2,0}+\a^{0,2})\wedge(\a^{2,0}+\a^{0,2})\wedge\frac{\w^{n-2}}{(n-2)!}+tr(\a_{0}\wedge\a_{0})\frac{\w^{n}}{(n-2)!}\\
&+tr(\a_{0}^{1,1}\wedge\a_{0}^{1,1})\wedge\frac{\w^{n-2}}{(n-2)!}.
\end{split}
\end{equation}
Thus, combining Equations (\ref{E2.1})--(\ref{E2.2}), we obtain Lemma \ref{L2}.
\end{proof}
\begin{proposition}\label{P1}
If $\a\in \Om^{2}(X,E)$ satisfies $(d_{A}+d^{\ast}_{A})\a=0$, then we have the following identities:\\
(i) $2\pa_{A}^{\ast}\a^{2,0}=-\sqrt{-1}n\pa_{A}\a_{0}$,\\
(ii) $2\bar{\pa}_{A}^{\ast}\a^{0,2}=\sqrt{-1}n\bar{\pa}_{A}\a_{0}$.
\end{proposition}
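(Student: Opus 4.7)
The plan is to extract the identity from the hypothesis $(d_A+d_A^*)\a=0$ by splitting into $d_A\a=0$ and $d_A^*\a=0$ (these live in different degrees, so both vanish separately), bidegree-decomposing each equation, and then combining the two resulting expressions for $\pa_A^*\a^{2,0}$ via the K\"ahler identities so that the unknown primitive piece $\a_0^{1,1}$ cancels.

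First I would fix the decomposition $\a=\a^{2,0}+\a^{0,2}+\a^{1,1}$ with $\a^{1,1}=\a_0\otimes\w+\a_0^{1,1}$, where $\La\a_0^{1,1}=0$ by definition. The $(2,1)$-component of $d_A\a=0$ reads $\bar\pa_A\a^{2,0}+\pa_A\a^{1,1}=0$, and the $(1,0)$-component of $d_A^*\a=0$ reads $\pa_A^*\a^{2,0}+\bar\pa_A^*\a^{1,1}=0$. The tools I will need are the K\"ahler identities $[\La,\pa_A]=\sqrt{-1}\bar\pa_A^*$ and $[\La,\bar\pa_A]=-\sqrt{-1}\pa_A^*$, the commutation $L\pa_A=\pa_A L$ and $L\bar\pa_A=\bar\pa_A L$ (a consequence of $d\w=0$), and the algebraic identity $[\La,L]=(n-k)\,\mathrm{id}$ on $\Om^k$.

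The main move is to compute $\pa_A^*\a^{2,0}$ in two different ways. On the one hand, the K\"ahler identity and $\La\a^{2,0}=0$ give $\pa_A^*\a^{2,0}=\sqrt{-1}\La\bar\pa_A\a^{2,0}=-\sqrt{-1}\La\pa_A\a^{1,1}$; expanding $\pa_A\a^{1,1}=L\pa_A\a_0+\pa_A\a_0^{1,1}$ and using $\La L=(n-1)$ on $\Om^1$ together with $[\La,\pa_A]\a_0^{1,1}=\sqrt{-1}\bar\pa_A^*\a_0^{1,1}$ yields
\[
\pa_A^*\a^{2,0}=-\sqrt{-1}(n-1)\pa_A\a_0+\bar\pa_A^*\a_0^{1,1}.
\]
On the other hand, from the codifferential equation $\pa_A^*\a^{2,0}=-\bar\pa_A^*(\a_0\otimes\w)-\bar\pa_A^*\a_0^{1,1}$, and a direct computation with $\bar\pa_A^*=-\sqrt{-1}[\La,\pa_A]$, using $\La L\pa_A\a_0=(n-1)\pa_A\a_0$ and $\pa_A\La L\a_0=n\pa_A\a_0$, gives $\bar\pa_A^*(\a_0\otimes\w)=\sqrt{-1}\pa_A\a_0$, hence
\[
\pa_A^*\a^{2,0}=-\sqrt{-1}\pa_A\a_0-\bar\pa_A^*\a_0^{1,1}.
\]
Adding these two expressions kills the $\bar\pa_A^*\a_0^{1,1}$ contribution and produces exactly $2\pa_A^*\a^{2,0}=-\sqrt{-1}n\pa_A\a_0$, which is (i). Identity (ii) follows by the entirely parallel argument applied to the $(1,2)$-component of $d_A\a=0$ and the $(0,1)$-component of $d_A^*\a=0$.

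The main obstacle is purely bookkeeping: one must keep careful track of which K\"ahler identity is being applied where, and in particular recognize that the term $\bar\pa_A^*\a_0^{1,1}$, which is unknown and cannot be eliminated from either equation alone, appears with opposite signs in the two expressions and is therefore killed on adding them. The rest is just the pointwise commutator $[\La,L]=n-k$ on forms of each relevant degree.
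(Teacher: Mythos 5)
Your proof is correct and follows essentially the same route as the paper: both arguments take the appropriate bidegree components of $d_{A}\a=0$ and $d_{A}^{\ast}\a=0$, convert them via $\La$ and the K\"ahler identities of Proposition \ref{P3}, and combine the two resulting equations so that the uncontrollable primitive piece $\a_{0}^{1,1}$ cancels, with the factor $n$ arising from $\La(\a_{0}\otimes\w)=n\a_{0}$. The only difference is organizational — you isolate $\pa_{A}^{\ast}\a^{2,0}$ twice and add, while the paper subtracts the two full component equations — and all your sign and commutator computations check out against the paper's conventions.
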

\begin{proof}
Our proof here use the author's arguments in \cite{Huang} Proposition 2.1 for  Yang-Mills connection. We  prove only the first identity, the second proof is similar. Recall the equation $d_{A}\a=0$, we take $(1,2)$ part, it implies that
\begin{equation}\nonumber
0=\bar{\pa}_{A}\a^{1,1}_{0}+\bar{\pa}_{A}(\a_{0}\otimes\w)+\pa_{A}\a^{0,2}.
\end{equation}
Thus, we obtain 
\begin{equation}\label{E2.3}
0=\sqrt{-1}\La(\bar{\pa}_{A}\a^{1,1}_{0})+\sqrt{-1}\La\bar{\pa}_{A}(\a_{0}\otimes\w)+\sqrt{-1}\La\pa_{A}\a^{0,2}.
\end{equation}
The equation $d^{\ast}_{A}\a=0$, we take $(0,1)$ part, it implies that
\begin{equation}\label{E2.4}
\pa^{\ast}_{A}(\a^{1,1}_{0}+\a_{0}\otimes\w)+\bar{\pa}^{\ast}_{A}\a^{0,2}=0.
\end{equation}
By K\"{a}hler identities, See Proposition \ref{P3}, we can write (\ref{E2.4}) to
\begin{equation}\label{E2.5}
\begin{split}
0&=\sqrt{-1}[\La,\bar{\pa}_{A}](\a^{1,1}_{0}+\a_{0}\otimes\w)-\sqrt{-1}[\La,\pa_{A}]\a^{0,2}\\
&=\sqrt{-1}\La(\bar{\pa}_{A}\a^{1,1}_{0})+\sqrt{-1}\La\bar{\pa}_{A}(\a_{0}\otimes\w)
-\sqrt{-1}n\bar{\pa}_{A}\a_{0}-\sqrt{-1}\La\pa_{A}\a^{0,2}.
\end{split}
\end{equation}
Thus, combining Equations (\ref{E2.3})--(\ref{E2.5}), we obtain
\begin{equation}\nonumber
\begin{split}
0&=-2\sqrt{-1}\La\pa_{A}\a^{0,2}_{A}-\sqrt{-1}n\bar{\pa}_{A}\a_{0}\\
&=2\bar{\pa}^{\ast}_{A}\a^{0,2}_{A}-\sqrt{-1}n\bar{\pa}_{A}\a_{0}.\\
\end{split}
\end{equation}
The second identity can be proved analogously.
\end{proof}
The curvature $F_{A}$ of a Hermitian connection $d_{A}$ on $E$ over a complete complex manifold is in $\Om^{2}(X,\mathfrak{g}_{E})$. Thus, we can decompose $F_{A}$ as
$$F_{A}=F^{2,0}_{A}+F^{1,1}_{A0}+\frac{1}{n}\hat{F}_{A}\otimes\w+F^{0,2}_{A},$$
where $\hat{F}_{A}:=\La F_{A}$ and $F^{1,1}_{A0}=F^{1,1}_{A}-\frac{1}{n}\hat{F}_{A}\otimes\w$. Following Lemma \ref{L2}, we can write Yang-Mills functional as
\begin{equation}\nonumber
\begin{split}
YM(A)&=4\|F^{0,2}_{A}\|^{2}+\|\La F_{A}\|^{2}-\int_{X}Tr(F_{A}\wedge F_{A})\wedge\frac{\w^{n-2}}{(n-2)!}.\\
\end{split}
\end{equation}
The energy functional $\|\La F_{A}\|^{2}$ plays an important role in the study of Hermitian-Einstein connections (See \cite{Don,Donaldson/Kronheimer,UY}). Following Proposition \ref{P1}, we have
\begin{proposition}\label{P2.7}
If $d_{A}$ is a Yang-Mills connection on a complete K\"{a}hler manifold. Then
\begin{equation}
2\bar{\pa}^{\ast}_{A}F^{0,2}_{A}=\sqrt{-1}\bar{\pa}_{A}\hat{F}_{A},\ 2\pa_{A}^{\ast}F^{2,0}_{A}=-\sqrt{-1}\pa_{A}\hat{F}_{A}.
\end{equation}
In particular, if $d_{A}\in\mathcal{A}^{1,1}_{E}$, then $d_{A}\hat{F}_{A}=0$, i.e., $\hat{F}_{A}$ is harmonic with respect to $\De_{A}$.
\end{proposition}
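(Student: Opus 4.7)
The plan is to apply Proposition \ref{P1} directly to $\a = F_{A}$. A Yang--Mills connection $d_{A}$ satisfies $d_{A}^{\ast}F_{A} = 0$ by definition, and satisfies the Bianchi identity $d_{A}F_{A} = 0$. Hence $(d_{A}+d_{A}^{\ast})F_{A}=0$, so the hypothesis of Proposition \ref{P1} holds with $\a = F_{A} \in \Om^{2}(X,\mathfrak{g}_{E})$.

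Next I would match up the components of the decomposition. Writing
\[
F_{A}=F^{2,0}_{A}+F^{1,1}_{A0}+\tfrac{1}{n}\hat{F}_{A}\otimes\w+F^{0,2}_{A},
\]
and comparing with the splitting $\a=\a^{2,0}+\a^{0,2}+\a_{0}\otimes\w+\a_{0}^{1,1}$ used in Proposition \ref{P1}, I read off $\a^{2,0}=F^{2,0}_{A}$, $\a^{0,2}=F^{0,2}_{A}$, and $\a_{0}=\tfrac{1}{n}\hat{F}_{A}$. Substituting into the two identities of Proposition \ref{P1} gives
\[
2\bar{\pa}_{A}^{\ast}F^{0,2}_{A}=\sqrt{-1}\,n\,\bar{\pa}_{A}\Bigl(\tfrac{1}{n}\hat{F}_{A}\Bigr)=\sqrt{-1}\,\bar{\pa}_{A}\hat{F}_{A},
\]
and analogously $2\pa_{A}^{\ast}F^{2,0}_{A}=-\sqrt{-1}\,\pa_{A}\hat{F}_{A}$, which are the two claimed identities. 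No further computation is needed here because the component identities of Proposition \ref{P1} are applied verbatim; the only delicate point is keeping track of the factor $1/n$ coming from the normalization $\a_{0}=\tfrac{1}{n}\La\a$.

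For the ``in particular'' statement, assume $d_{A}\in\mathcal{A}^{1,1}_{E}$, so $F^{2,0}_{A}=F^{0,2}_{A}=0$. Then the left-hand sides of both identities vanish, forcing $\bar{\pa}_{A}\hat{F}_{A}=0$ and $\pa_{A}\hat{F}_{A}=0$, whence
\[
d_{A}\hat{F}_{A}=\pa_{A}\hat{F}_{A}+\bar{\pa}_{A}\hat{F}_{A}=0.
\]
Since $\hat{F}_{A}=\La F_{A}$ is a section of $\mathfrak{g}_{E}$, i.e.\ a $0$-form, the adjoint $d_{A}^{\ast}\hat{F}_{A}$ is automatically zero (there are no $(-1)$-forms), so $\De_{A}\hat{F}_{A}=d_{A}d_{A}^{\ast}\hat{F}_{A}+d_{A}^{\ast}d_{A}\hat{F}_{A}=0$, showing that $\hat{F}_{A}$ is $\De_{A}$-harmonic. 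There is no serious obstacle in this proof; it is essentially a substitution, with the only thing to verify carefully being the identification of the components and the triviality of $d_{A}^{\ast}$ on $0$-forms.
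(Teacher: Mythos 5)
Your proof is correct and is essentially the paper's own argument: the paper likewise obtains Proposition \ref{P2.7} by applying Proposition \ref{P1} to $\a=F_{A}$, using $d_{A}F_{A}=0$ (Bianchi) and $d_{A}^{\ast}F_{A}=0$ (Yang--Mills), with the factor $n$ cancelling against the normalization $\a_{0}=\frac{1}{n}\La F_{A}=\frac{1}{n}\hat{F}_{A}$. Your treatment of the ``in particular'' clause, noting that $d_{A}^{\ast}$ vanishes on the $0$-form $\hat{F}_{A}$ so that $\De_{A}\hat{F}_{A}=d_{A}^{\ast}d_{A}\hat{F}_{A}=0$, is also exactly what is needed.
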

\begin{corollary}
Let $(X,\w)$ be a complete, K\"{a}hler manifold, $\dim_{\C}X=n$, with a $d$(bounded) K\"{a}hler form $\w$, $E$ be a Hermitian vector bundle over $X$ and $d_{A}$ be a Hermitian connection on $E$. If $d_{A}\in\mathcal{A}^{1,1}_{E}$ is a smooth Yang-Mills connection with $L^{2}$-curvature $F_{A}$, then\\
(i)  $d_{A}$ is a flat connection, for $n\geq3$,\\
(ii) $d_{A}$ is an anti-self-dual connection, for $n=2$.
\end{corollary}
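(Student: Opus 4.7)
The plan is to reduce both assertions to Theorem \ref{T3}(1) applied to the Hermitian connection induced by $d_A$ on the adjoint bundle $\mathfrak{g}_E \subset \operatorname{End}E$. Since $d_A\in\mathcal{A}^{1,1}_E$, its curvature satisfies $F_A^{2,0}=F_A^{0,2}=0$, and the induced connection on $\mathfrak{g}_E$ is also integrable. Moreover, $F_A$ is $L^2$ and is $\Delta_A$-harmonic on $\mathfrak{g}_E$: the Bianchi identity gives $d_A F_A=0$, and the Yang-Mills equation gives $d_A^{\ast}F_A=0$.

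For part (i), with $n\geq 3$ the degree $k=2$ is distinct from $n$. Theorem \ref{T3}(1), applied to $\mathfrak{g}_E$ with its integrable Hermitian connection, yields $F_A\in\mathcal{H}^{2}_{(2)}(X,\mathfrak{g}_E)=\{0\}$, and hence $d_A$ is flat.

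For part (ii) with $n=2$, Theorem \ref{T3} cannot be applied directly to $F_A$ because the middle degree obstructs it. Instead I would pivot to the scalar contraction $\hat{F}_A=\Lambda F_A$. The hypothesis $d_A\in\mathcal{A}^{1,1}_E$ collapses the decomposition of $F_A$ recorded before Proposition \ref{P2.7} to $F_A=F_{A0}^{1,1}+\tfrac{1}{2}\hat{F}_A\otimes\w$, so that $F_A^{+}=\tfrac{1}{2}\hat{F}_A\otimes\w$ and anti-self-duality of $d_A$ is equivalent to $\hat{F}_A=0$. Proposition \ref{P2.7} gives $d_A\hat{F}_A=0$; since $\hat{F}_A$ is a $0$-form, $d_A^{\ast}\hat{F}_A=0$ holds trivially, so $\hat{F}_A$ is $\Delta_A$-harmonic, and it lies in $L^2$ because $F_A$ does. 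Applying Theorem \ref{T3}(1) to $\mathfrak{g}_E$ in degree $k=0\neq n=2$ then produces $\hat{F}_A\in\mathcal{H}^{0}_{(2)}(X,\mathfrak{g}_E)=\{0\}$, and hence $d_A$ is anti-self-dual.

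There is no substantive obstacle here: once the passage to the adjoint bundle is made, (i) is an immediate citation of Theorem \ref{T3}(1), and the only real idea in (ii) is to replace the non-applicable middle-degree statement for $F_A$ by the degree-zero statement for $\hat{F}_A$, using the integrability hypothesis together with Proposition \ref{P2.7}.
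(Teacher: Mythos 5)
Your proof is correct, but it takes a different route from the paper's, most notably in part (i). The paper does not invoke Theorem \ref{T3} at all for this corollary: it first shows $\hat{F}_{A}=\La F_{A}$ is a parallel $L^{2}$ section via Proposition \ref{P2.7}, the Weitzenb\"ock formula $0=\De_{A}\hat{F}_{A}=\na_{A}^{\ast}\na_{A}\hat{F}_{A}$ and the Kato inequality, then kills it with Gromov's scalar Theorem \ref{T4}; for $n\geq 3$ it then feeds $F^{0,2}_{A}=0$, $\hat F_A=0$ and the Stokes-type vanishing $\int_{X}Tr(F_{A}\wedge F_{A})\wedge\w^{n-2}=0$ (Proposition \ref{P2}) into the energy identity of Lemma \ref{L2} to conclude $\|F_{A}\|_{L^2}^2=0$. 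Your part (i) instead applies Theorem \ref{T3}(1) in degree $2\neq n$ directly to the $\De_{A\otimes A^{\ast}}$-harmonic form $F_{A}$ on $\mathfrak{g}_{E}$; this is legitimate (the induced connection on $\mathfrak{g}_E$ is integrable since $\dagger$ preserves bidegree, Case I of Theorem \ref{T3} needs no curvature bound, and the paper itself uses exactly this passage to the adjoint bundle in Corollary \ref{C1}), and it is shorter because it bypasses Lemma \ref{L2} and Proposition \ref{P2} entirely. What the paper's route buys is economy of input: it only needs Gromov's original scalar estimate plus algebra, whereas yours leans on the full degree-$2$ vanishing theorem. Your part (ii) is closer in spirit to the paper's — both reduce anti-self-duality to $\hat{F}_{A}=0$ via $F_{A}^{+}=\tfrac{1}{2}\hat{F}_{A}\otimes\w$ — differing only in whether the degree-$0$ vanishing is extracted from Theorem \ref{T3} or from Theorem \ref{T4} after the Kato step.
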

\begin{proof}
By Proposition \ref{P2.7}, we can see that $\hat{F}_{A}$ is $L^{2}$-harmonic section on $E$. The Weizenb\"{o}ck formula gives
$$0=\De_{A}\hat{F}_{A}=\na_{A}^{\ast}\na_{A}\hat{F}_{A}.$$
Thus we have $\na_{A}\hat{F}_{A}=0$. The Kato inequality $|\na\hat{F}_{A}|\leq \na|\hat{F}_{A}|$, See \cite[Equation (6.20)]{FU} implies that $\na|\hat{F}_{A}|=0$. By Theorem \ref{T4}, we get $\hat{F}_{A}=0$. Following Proposition \ref{P2}, we have
$$\int_{X}Tr(F_{A}\wedge F_{A})\wedge\w^{n-2}=0,\  n\geq 3.$$ 
By the Energy identity of Yang-Mills connection, we have the connection $d_{A}$ is flat for $n\geq3$ and $d_{A}$ is anti-self-dual for $n=2$.   We complete the proof of this corollary.
\end{proof}

\section{Hodge theory}
As we derive estimates in this section (and also following sections), there will be many constants which appear. Sometimes we will take care to bound the size of these constants, but we will also use the following notation whenever the value of the constants are unimportant. We write $\a\lesssim\b$ to mean that $\a\leq C\b$ for some positive constant $C$ independent of certain parameters on which $\a$ and $\b$ depend. The parameters on which $C$ is independent will be clear or specified at each occurrence. We also use $\b\lesssim\a$ and $\a\approx\b$ analogously.
\subsection{Harmonic form with respect to $\De_{A}$}
Let $(X,g)$ be an oriented, smooth, Riemannian manifold, $\dim_{\mathbb{R}}X=n$, and $E$ be a Hermitian vector bundle over $X$. Assume now that $d_{A}$ is a Hermitian connection on $E$. The formal adjoint operator of $d_{A}$ acting on $\Om^{p}(X,E)$ is $d^{\ast}_{A}=(-1)^{np+1}\ast d_{A}\ast$, where the operator $\ast:\Om^{p}(X,E)\rightarrow\Om^{n-p}(X,E)$ induced by the Hodge-Poincar\'{e}-De Rahm operator $\ast_{g}$ \cite[Charp Vi, Section 3]{Dem}. Indeed, if $\a\in\Om^{p}(X,E)$, $\b\in\Om^{p+1}(X,E)$ have compact support, we get
$$\int_{X}\langle d_{A}\a,\b\rangle=\int_{X}\langle \a,d^{\ast}_{A}\b\rangle.$$
The Laplace-Beltrami operator associated to $d_{A}$ is the second order operator $\De_{A}=d_{A}d^{\ast}_{A}+d^{\ast}_{A}d_{A}$. 
\begin{definition}\label{D3.1}
The space of $L^{2}$-harmonic forms of degree of $p$ respect to the Laplace-Beltrami operator $\De_{A}$ is defined by
$$\mathcal{H}^{p}_{(2)}(X,E)=\{\a\in\Om^{p}_{(2)}(X,E):\De_{A}\a=0\}.$$
\end{definition}
In this article, we follow the method of Gromov's \cite{Gro} to choose a sequence of cutoff functions $\{f_{\varepsilon}\}$ satisfying the following conditions:\\
(i) $f_{\varepsilon}$ is smooth and takes values in the interval $[0,1]$, furthermore, $f_{\varepsilon}$ has compact support.\\
(ii) The subsets $f^{-1}_{\varepsilon}\subset X$, i.e., of the points $x\in X$ where $f_{\varepsilon}(x)=1$ exhaust $X$ as $\varepsilon\rightarrow0$.\\
(iii) The differential of $f_{\varepsilon}$ everywhere bounded by $\varepsilon$,
$$\|df_{\varepsilon}\|_{L^{\infty}(X)}=\sup_{x\in X}|df_{\varepsilon}|\leq\varepsilon.$$
Then we have an useful lemma as follows.
\begin{lemma}\label{L3.2}
If $\a$ is a $L^{2}$-harmonic form of degree $p$ with respect to  $\De_{A}$ over a complete Riemannian manifold $X$, then $\a$ also satisfies $$d_{A}\a=d_{A}^{\ast}\a=0.$$
\end{lemma}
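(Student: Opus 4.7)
The plan is to use the cutoff functions $\{f_{\varepsilon}\}$ introduced just before the lemma to justify an integration by parts that would be immediate if $X$ were compact. Since $\alpha \in L^{2}$ and $\Delta_{A}\alpha = 0$, I would like to write $0 = \langle \Delta_{A}\alpha, \alpha \rangle = \|d_{A}\alpha\|_{L^{2}}^{2} + \|d_{A}^{\ast}\alpha\|_{L^{2}}^{2}$, but on a complete non-compact manifold this pairing requires a priori control of $d_{A}\alpha$ and $d_{A}^{\ast}\alpha$ in $L^{2}$, which is exactly what we are trying to prove.

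First I would test $\Delta_{A}\alpha = 0$ against $f_{\varepsilon}^{2}\alpha$, which has compact support, so all integration by parts is legitimate. Using the product rules
\[
d_{A}(f_{\varepsilon}^{2}\alpha) = f_{\varepsilon}^{2}\, d_{A}\alpha + 2 f_{\varepsilon}\, df_{\varepsilon}\wedge\alpha, \qquad d_{A}^{\ast}(f_{\varepsilon}^{2}\alpha) = f_{\varepsilon}^{2}\, d_{A}^{\ast}\alpha - 2 f_{\varepsilon}\, \iota_{\nabla f_{\varepsilon}}\alpha,
\]
together with the pairings $\langle d_{A}d_{A}^{\ast}\alpha, f_{\varepsilon}^{2}\alpha\rangle = \langle d_{A}^{\ast}\alpha, d_{A}^{\ast}(f_{\varepsilon}^{2}\alpha)\rangle$ and $\langle d_{A}^{\ast}d_{A}\alpha, f_{\varepsilon}^{2}\alpha\rangle = \langle d_{A}\alpha, d_{A}(f_{\varepsilon}^{2}\alpha)\rangle$, I obtain
\[
0 = \|f_{\varepsilon}\, d_{A}\alpha\|_{L^{2}}^{2} + \|f_{\varepsilon}\, d_{A}^{\ast}\alpha\|_{L^{2}}^{2} + 2\langle f_{\varepsilon}\, d_{A}\alpha, df_{\varepsilon}\wedge\alpha\rangle - 2\langle f_{\varepsilon}\, d_{A}^{\ast}\alpha, \iota_{\nabla f_{\varepsilon}}\alpha\rangle.
\]

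Next I would bound the two cross terms by Cauchy--Schwarz, using the pointwise estimates $|df_{\varepsilon}\wedge\alpha|, |\iota_{\nabla f_{\varepsilon}}\alpha| \lesssim |df_{\varepsilon}|\,|\alpha| \leq \varepsilon |\alpha|$. Setting $a_{\varepsilon} = \|f_{\varepsilon}\, d_{A}\alpha\|_{L^{2}}$ and $b_{\varepsilon} = \|f_{\varepsilon}\, d_{A}^{\ast}\alpha\|_{L^{2}}$, this yields
\[
a_{\varepsilon}^{2} + b_{\varepsilon}^{2} \leq C\,\varepsilon\,(a_{\varepsilon} + b_{\varepsilon})\,\|\alpha\|_{L^{2}(X)},
\]
which after an elementary rearrangement (absorbing $a_{\varepsilon}+b_{\varepsilon}$ on the left via the inequality $2xy \leq x^{2}+y^{2}$) gives $a_{\varepsilon}^{2}+b_{\varepsilon}^{2} \leq C'\,\varepsilon^{2}\,\|\alpha\|_{L^{2}(X)}^{2}$.

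Finally, since $f_{\varepsilon}\to 1$ pointwise on $X$ as $\varepsilon\to 0$ by the exhaustion property, monotone convergence applied to $|d_{A}\alpha|^{2}$ and $|d_{A}^{\ast}\alpha|^{2}$ on the increasing sets $\{f_{\varepsilon}=1\}$ forces $\|d_{A}\alpha\|_{L^{2}(X)} = \|d_{A}^{\ast}\alpha\|_{L^{2}(X)} = 0$, hence $d_{A}\alpha = d_{A}^{\ast}\alpha = 0$. The only subtle point is the juggling of the cross terms; everything else is a routine application of the cutoff technique. Completeness of $X$ enters only through the existence of the exhausting family $\{f_{\varepsilon}\}$ with the uniform gradient bound.
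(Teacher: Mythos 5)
Your proof is correct and follows essentially the same route as the paper: test $\De_{A}\a=0$ against the compactly supported form $f_{\varepsilon}^{2}\a$, integrate by parts, bound the cross terms via Cauchy--Schwarz using $|df_{\varepsilon}|\leq\varepsilon$, absorb, and let $\varepsilon\to 0$. The only cosmetic difference is that the paper normalizes the cutoffs by additionally requiring $|df_{\varepsilon}|^{2}<\varepsilon f_{\varepsilon}$ before applying Schwarz, whereas you use the plain gradient bound together with the standard $2xy\leq x^{2}+y^{2}$ absorption; both are valid.
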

\begin{proof}
By a simple computation
\begin{equation}\nonumber
\begin{split}
0&=\langle\De_{A}\a, f^{2}_{\varepsilon}\a\rangle_{L^{2}(X)}
=\langle d_{A}\a,d_{A}(f^{2}_{\varepsilon}\a)\rangle_{L^{2}(X)}+\langle d_{A}^{\ast}\a,d_{A}^{\ast}(f^{2}_{\varepsilon}\a)\rangle_{L^{2}(X)}\\
&=\langle d_{A}\a,f_{\varepsilon}df_{\varepsilon}\wedge\a+f^{2}_{\varepsilon}d_{A}\a\rangle_{L^{2}(X)}
+\langle d_{A}^{\ast}\a,\ast(f_{\varepsilon}df_{\varepsilon}\wedge\ast\a)+f^{2}_{\varepsilon}d_{A}^{\ast}\a\rangle_{L^{2}(X)}\\
&=I_{1}(\varepsilon)+I_{2}(\varepsilon),\\
\end{split}
\end{equation}
where
$$I_{1}(\varepsilon)=\int_{X}f^{2}_{\varepsilon}(|d_{A}\a|^{2}+|d_{A}^{\ast}\a|^{2})$$
and
$$I_{2}(\varepsilon)\leq2\int_{X}|df_{\varepsilon}||f_{\varepsilon}||\a|(|d_{A}\a|+|d^{\ast}_{A}\a|).$$
Then we choose $f_{\varepsilon}$, such that $|df_{\varepsilon}|^{2}<\varepsilon f_{\varepsilon}$ on $X$ and estimate $I_{2}$ by Schwartz inequality. This yields
$$I_{2}(\varepsilon)\leq2\varepsilon\|f_{\varepsilon}\a\|_{L^{2}(X)}\big{(}\int_{X}f_{\varepsilon}^{2}(|d_{A}\a|^{2}+|d_{A}^{\ast}\a|^{2})\big{)}^{1/2}$$
and hence $I_{1}\rightarrow0$ for $\varepsilon\rightarrow0$.
\end{proof}
\begin{remark}
We denote by $\Om^{k}_{(0)}(X,E)$ by the space of $C^{\infty}$ $k$-forms on $E$ with compact support on $X$. Then, for any $\a\in\Om^{k}_{0}(X,E)$, we have the identity
$$\langle \De_{A}\a,\a\rangle_{L^{2}(X)}=\|d_{A}\a\|^{2}_{L^{2}(X)}+\|d^{\ast}_{A}\a\|^{2}_{L^{2}(X)}.$$
The above identity also holds in the case of $\a\in\Om^{k}_{(2)}(X,E)$ such that $\De_{A}\a\in L^{2}$.
\end{remark} 
There are several commutation relations between the basic operators associated to a K\"{a}hler manifold $X$, all following more or less directly from the K\"{a}hler condition $d\w=0$; taken together, these are referred to as the K\"{a}hler identities \cite{Dem,Huy}. 
\begin{proposition}\label{P3}
Let $X$ be a complete K\"{a}hler manifold, $E$ a Hermitian vector bundle over $X$ and $d_{A}$ be a Hermitian connection on $E$. We have the following identities\\
(i)\ $[\La,\bar{\pa}_{A}]=-\sqrt{-1}\pa^{\ast}_{A}$,\ $[\La,\pa_{A}]=\sqrt{-1}\bar{\pa}^{\ast}_{A}$.\\
(ii)\ $[\bar{\pa}^{\ast}_{A},L]=\sqrt{-1}\pa_{A}$,\ $[\pa^{\ast}_{A},L]=-\sqrt{-1}\bar{\pa}_{A}$.
\end{proposition}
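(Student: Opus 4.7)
The plan is to reduce the twisted Kähler identities to the classical (untwisted) Kähler identities by a pointwise frame argument. Both identities in (i) are related by complex conjugation, and (ii) follows from (i) by taking formal $L^{2}$ adjoints using $L^{\ast}=\Lambda$ together with the facts that $(\bar{\partial}_{A})^{\ast\ast}=\bar{\partial}_{A}$ and $(\partial_{A})^{\ast\ast}=\partial_{A}$. So it suffices to establish the single identity $[\Lambda,\bar{\partial}_{A}]=-\sqrt{-1}\,\partial_{A}^{\ast}$.

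Since $\Lambda$ is a zeroth-order operator and $\bar{\partial}_{A},\partial_{A}^{\ast}$ are first-order, the asserted equality is an identity of differential operators on $\Omega^{p,q}(X,E)$, hence can be checked pointwise. Fix $p\in X$. I would choose Kähler normal coordinates centered at $p$, so that the metric is Euclidean to first order at $p$, i.e.\ $g_{i\bar{j}}(p)=\delta_{ij}$ and $\partial g_{i\bar{j}}(p)=0$. Next, since $d_{A}$ is Hermitian, the local connection matrix is skew-Hermitian; by parallel transport along radial geodesics emanating from $p$ one obtains a local unitary frame $(e_{\lambda})$ for $E$ in which the connection $1$-form $A$ satisfies $A(p)=0$. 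In this combined choice of coordinates and frame, at the single point $p$ one has $\partial_{A}=\partial$, $\bar{\partial}_{A}=\bar{\partial}$, and also the formal adjoints $\partial_{A}^{\ast}=\partial^{\ast}$, $\bar{\partial}_{A}^{\ast}=\bar{\partial}^{\ast}$, because these formal adjoints depend only on the zeroth- and first-order jets of the metric and of $A$ at $p$, all of which vanish.

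With this reduction, the classical Kähler identity $[\Lambda,\bar{\partial}]=-\sqrt{-1}\,\partial^{\ast}$ (proven, e.g., in Demailly or Huybrechts via the $sl_{2}$-representation on the exterior algebra at a point) applies verbatim at $p$, giving the twisted identity at $p$. Since $p\in X$ was arbitrary, the identity holds on all of $X$. The conjugate identity $[\Lambda,\partial_{A}]=\sqrt{-1}\,\bar{\partial}_{A}^{\ast}$ then follows either by the same argument applied to the conjugate operator, or by taking the complex conjugate of the first, using that $d_{A}$ being Hermitian makes the roles of $(1,0)$ and $(0,1)$ symmetric up to conjugation.

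The main subtlety I expect is verifying that one can simultaneously achieve Kähler normal coordinates on $X$ and a unitary frame for $E$ in which $A(p)=0$; the former is standard, and the latter is obtained by a Hermitian gauge transformation realized via radial parallel transport from $p$, which preserves the Hermitian metric on $E$ and forces $A(p)=0$. Once this is in place, the rest is essentially the classical identity. A pure algebraic alternative (without choosing frames) would use the pointwise commutator formula $[\Lambda,e(\nu)]=-\sqrt{-1}\,\iota_{\bar{\nu}^{\sharp}}$ for any $(0,1)$-form $\nu$ applied to the connection part of $\bar{\partial}_{A}$, but the frame-based approach is shorter and is the one I would carry out.
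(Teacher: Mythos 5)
Your proof is correct. Note that the paper itself gives no proof of Proposition 3.4: it simply records these as the standard K\"ahler identities and cites Demailly and Huybrechts. Your reduction --- K\"ahler normal coordinates (using $d\omega=0$ to kill the first jet of the metric) combined with a radial unitary gauge in which $A(p)=0$, so that the twisted operators coincide at $p$ with the untwisted ones and the flat-space identity applies --- is exactly the standard argument found in those references, so there is nothing to compare. The only point worth tightening is the derivation of $[\Lambda,\partial_{A}]=\sqrt{-1}\,\bar{\partial}_{A}^{\ast}$ by ``complex conjugation'': for $E$-valued forms conjugation maps $\Omega^{p,q}(X,E)$ to $\Omega^{q,p}(X,\overline{E})$ rather than to forms valued in $E$ itself, so the cleaner route is the one you also offer, namely rerunning the same pointwise reduction for $\partial_{A}$ (or obtaining it from the first identity of (ii) by adjoints, which you have already justified via $L^{\ast}=\Lambda$ and $(cT)^{\ast}=\bar{c}\,T^{\ast}$).
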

Since $\w$ is parallel, the operator $L^{k}:\Om^{p}(X,E)\rightarrow\Om^{p+2k}(X,E)$ defined by $L^{k}(\a)=\a\wedge\w^{k}$ for all $p$-forms commutes with $d_{A}$. But the operator $L^{k}$ does not commute with $d^{\ast}_{A}$ in general, therefore the operator $L^{k}$ does not commute with $\De_{A}$. 

If $A$ and $B$ are operators on forms, define the (graded) commutator as 
$$[A,B]=AB-(-1)^{degA\cdot degB}BA,$$
where $degT$ is the integer $d$ for $T$: $\oplus_{p+q=r}\Om^{p,q}(X,E)\rightarrow\oplus_{p+q=r+d}\Om^{p,q}(X,E)$. If $C$ is another endomorphism of degree $c$, the following $Jacobi$ $identity$ is easy to check
$$(-1)^{ca}\big{[}A,[B,C]\big{]}+(-1)^{ab}\big{[}B,[C,A]\big{]}+(-1)^{bc}\big{[}C,[A,B]\big{]}=0.$$
For any $(p,q)$-form $\a$ on $E$ which also satisfies $d_{A}\a=d_{A}^{\ast}\a=0$, we observe the following useful lemma. 
\begin{lemma}\label{L3.4}
If $\a\in\Om^{p,q}(X,E)\cap\mathcal{H}^{p+q}_{(2)}(X,E)$ on a complete K\"{a}hler manifold $X$, then
$$[d^{\ast}_{A},L^{k}]\a=0,\ \forall\ k\in\mathbb{N}.$$
In particular, $[\De_{A},L^{k}]\a=0$.
\end{lemma}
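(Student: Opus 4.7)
The plan is to use the Kähler identities (Proposition \ref{P3}) to compute $[d^{\ast}_A, L]$ explicitly, and then observe that on a harmonic form of pure bidegree, all the relevant pieces ($\pa_A\a$, $\bar{\pa}_A\a$, $\pa^{\ast}_A\a$, $\bar{\pa}^{\ast}_A\a$) vanish separately, so the single commutator kills $\a$; then iterate.

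First I would invoke Lemma \ref{L3.2} to get $d_A\a = d^{\ast}_A\a = 0$. Since $\a \in \Om^{p,q}(X,E)$, the components $\pa_A\a \in \Om^{p+1,q}(X,E)$ and $\bar{\pa}_A\a \in \Om^{p,q+1}(X,E)$ live in different bidegrees, so $d_A\a = 0$ forces
\[
\pa_A\a = \bar{\pa}_A\a = 0,
\]
and the analogous decomposition of $d^{\ast}_A = \pa^{\ast}_A + \bar{\pa}^{\ast}_A$ yields $\pa^{\ast}_A\a = \bar{\pa}^{\ast}_A\a = 0$.

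Next, by Proposition \ref{P3} one has $[\pa^{\ast}_A, L] = -\sqrt{-1}\,\bar{\pa}_A$ and $[\bar{\pa}^{\ast}_A, L] = \sqrt{-1}\,\pa_A$, so
\[
[d^{\ast}_A, L] = \sqrt{-1}(\pa_A - \bar{\pa}_A).
\]
Applied to $\a$ this gives $[d^{\ast}_A, L]\a = 0$. For general $k$, expand telescopically
\[
[d^{\ast}_A, L^k] = \sum_{j=0}^{k-1} L^{j}\,[d^{\ast}_A, L]\,L^{k-1-j} = \sqrt{-1}\sum_{j=0}^{k-1} L^{j}(\pa_A - \bar{\pa}_A)L^{k-1-j}.
\]
Because $\w$ is parallel and $d$-closed, $\pa\w = \bar{\pa}\w = 0$, hence $\pa_A$ and $\bar{\pa}_A$ commute with $L$; so each summand applied to $\a$ becomes $L^{k-1}\sqrt{-1}(\pa_A\a - \bar{\pa}_A\a) = 0$, proving $[d^{\ast}_A, L^k]\a = 0$.

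Finally, for $[\De_A, L^k]\a$, note that $L^k$ has even degree, so the ordinary Leibniz rule applies, and since $L$ commutes with $d_A$,
\[
[\De_A, L^k] = d_A[d^{\ast}_A, L^k] + [d^{\ast}_A, L^k]d_A.
\]
The second term annihilates $\a$ because $d_A\a = 0$, and the first term annihilates $\a$ by the previous step. I do not foresee any genuine obstacle: the only point requiring attention is the sign bookkeeping in the graded Leibniz rule, which is trivial here because $L$ is of even degree.
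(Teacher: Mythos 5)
Your proof is correct and follows essentially the same route as the paper: Lemma \ref{L3.2} together with the pure bidegree of $\a$ gives $\pa_{A}\a=\bar{\pa}_{A}\a=0$, the K\"{a}hler identities give $[d^{\ast}_{A},L]=\sqrt{-1}(\pa_{A}-\bar{\pa}_{A})$, and the commutation of $L$ with $\pa_{A},\bar{\pa}_{A}$ handles the higher powers. The only cosmetic differences are that the paper peels off one factor of $L$ at a time rather than writing the telescoping sum, and it invokes the graded Jacobi identity rather than your direct Leibniz expansion for the $\De_{A}$ claim --- your version is arguably cleaner, since the paper's displayed operator identity $[\De_{A},L^{k}]=0$ is only valid after applying it to $\a$ (compare Lemma \ref{L3}).
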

\begin{proof}
At first, following Lemma \ref{L3.2}, we have $d_{A}\a=(\bar{\pa}_{A}+\pa_{A})\a=0$, i.e.,  $$\bar{\pa}_{A}\a=\pa_{A}\a=0.$$ 
Then for any $k\in\mathbb{N}$, 
$$[\pa_{A},L^{k}]\a=0,\  \  [\bar{\pa}_{A},L^{k}]\a=0.$$
From K\"{a}hler identities, one has
\begin{equation}\nonumber
\begin{split}
[d^{\ast}_{A},L^{k}]\a&=d^{\ast}_{A}L^{k}\a=([d^{\ast}_{A},L]+Ld^{\ast}_{A})(L^{k-1}\a)\\
&=(\sqrt{-1}(\pa_{A}-\bar{\pa}_{A})+Ld^{\ast}_{A})(L^{k-1}\a)\\
&=Ld^{\ast}_{A}L^{k-1}\a=L([d_{A}^{\ast},L]+Ld^{\ast}_{A})(L^{k-2}\a)\\
&=L^{2}d^{\ast}_{A}L^{k-2}\a=\ldots=L^{k}(d_{A}^{\ast}\a)=0.
\end{split}
\end{equation}
Following the Jacobi identity, we have
$$[\De_{A},L^{k}]=[[d_{A},d_{A}^{\ast}],L^{k}]=[[L^{k},d_{A}],d_{A}^{\ast}]+[[d_{A}^{\ast},L^{k}],d_{A}]=0.$$
We complete this proof.
\end{proof}
\begin{remark}
Let $\a_{k}$ be a smooth $k$-form on $E$. Then we can write $\a_{k}=\sum_{p+q=k}a_{p,q}$. If $d_{A}\a_{k}=d_{A}^{\ast}\a_{k}=0$, then we have
\begin{equation}\label{E14}
\bar{\pa}_{A}\a_{p,q}+{\pa}_{A}\a_{p-1,q+1}=0,\ \bar{\pa}^{\ast}_{A}\a_{p,q}+\pa^{\ast}_{A}\a_{p+1,q-1}=0.
\end{equation}
If $d_{A}$ is a smooth Hermitian connection, then we always can not deduce from Equations (\ref{E14}) to the identity  $\bar{\pa}_{A}\a_{p,q}=0$ for any $0\leq p\leq k$. It means that $\bar{\pa}_{A}\a_{k}=0$ may be incorrect. One also can see Proposition \ref{P1} on $2$-forms case.
\end{remark}
Next, let us indicate one simple proposition:
\begin{proposition}\label{P3.5}
If the $L^{2}$-harmonic form $\a$ in $\Om^{k}_{(2)}(X,E)$ is $d_{A}(L^{2})$, i.e., there exists a $L^{2}$-form $\b$ such that $\a=d_{A}\b$, then $\a=0$.
\end{proposition}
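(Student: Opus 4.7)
The plan is a standard cutoff/integration-by-parts argument using the sequence $\{f_{\varepsilon}\}$ introduced before Lemma \ref{L3.2}. Since $\a\in\mathcal{H}^{k}_{(2)}(X,E)$, Lemma \ref{L3.2} gives $d_{A}\a=d_{A}^{\ast}\a=0$. To show $\a=0$ it therefore suffices to show $\|\a\|_{L^{2}(X)}^{2}=0$, and to do this I want to move $d_{A}$ off of $\b$ using the fact that $d_{A}^{\ast}\a=0$, with a cutoff installed to justify integration by parts on the non-compact manifold $X$.

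The key identity I would use is
$$\|f_{\varepsilon}\a\|_{L^{2}(X)}^{2}=\langle f_{\varepsilon}^{2}\a,\a\rangle_{L^{2}(X)}=\langle f_{\varepsilon}^{2}\a,d_{A}\b\rangle_{L^{2}(X)}=\langle d_{A}^{\ast}(f_{\varepsilon}^{2}\a),\b\rangle_{L^{2}(X)},$$
which is legitimate because $f_{\varepsilon}^{2}\a$ has compact support. Using $d_{A}^{\ast}=\pm\ast d_{A}\ast$ and expanding the Leibniz rule, one gets
$$d_{A}^{\ast}(f_{\varepsilon}^{2}\a)=f_{\varepsilon}^{2}d_{A}^{\ast}\a+c\,f_{\varepsilon}\ast(df_{\varepsilon}\wedge\ast\a),$$
for an absolute constant $c$. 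Since $d_{A}^{\ast}\a=0$, only the error term survives, and it is bounded pointwise by $C|df_{\varepsilon}|\,f_{\varepsilon}|\a|$.

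Feeding this into the identity above and applying the Cauchy--Schwarz inequality yields
$$\|f_{\varepsilon}\a\|_{L^{2}(X)}^{2}\leq C\,\|df_{\varepsilon}\|_{L^{\infty}(X)}\,\|f_{\varepsilon}\a\|_{L^{2}(X)}\,\|\b\|_{L^{2}(X)}\leq C\,\varepsilon\,\|f_{\varepsilon}\a\|_{L^{2}(X)}\,\|\b\|_{L^{2}(X)},$$
where the second inequality uses the defining property $\|df_{\varepsilon}\|_{L^{\infty}(X)}\leq\varepsilon$ of the cutoff sequence. Dividing by $\|f_{\varepsilon}\a\|_{L^{2}(X)}$ gives $\|f_{\varepsilon}\a\|_{L^{2}(X)}\leq C\varepsilon\|\b\|_{L^{2}(X)}$.

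Finally, since the sublevel sets of $f_{\varepsilon}$ exhaust $X$ as $\varepsilon\to 0$ and $0\leq f_{\varepsilon}\leq 1$, the monotone (or dominated) convergence theorem gives $\|f_{\varepsilon}\a\|_{L^{2}(X)}\to\|\a\|_{L^{2}(X)}$, while the right side tends to zero. Hence $\|\a\|_{L^{2}(X)}=0$, i.e.\ $\a=0$. The only genuinely delicate point is checking that the boundary/error term in the integration by parts is controlled by $\|df_{\varepsilon}\|_{L^{\infty}(X)}$ times $L^{2}$ norms that remain finite in the limit; this is exactly the same mechanism as in the proof of Lemma \ref{L3.2}, so no new obstacle arises.
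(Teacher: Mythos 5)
Your proposal is correct and follows essentially the same route as the paper: both use the cutoff sequence $\{f_{\varepsilon}\}$, integrate by parts to move $d_{A}$ off of $\b$, kill the main term with $d_{A}^{\ast}\a=0$ from Lemma \ref{L3.2}, bound the surviving error term by $\varepsilon\|\a\|_{L^{2}(X)}\|\b\|_{L^{2}(X)}$, and pass to the limit via dominated convergence. The only cosmetic difference is that you test against $f_{\varepsilon}^{2}\a$ and divide through, whereas the paper pairs $d_{A}\b$ directly with $f_{\varepsilon}\a$; the mechanism is identical.
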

\begin{proof}
Let $\{f_{\varepsilon}\}$  be a sequence of cutoff functions on \cite{Gro}. Noting that $d_{A}^{\ast}\a=0$ and $f_{\varepsilon}\a$ has compact support, one have
\begin{equation}\nonumber
\langle d_{A}\b,f_{\varepsilon}\a\rangle_{L^{2}(X)}=\langle\b, df_{\varepsilon}\wedge\a\rangle_{L^{2}(X)}.
\end{equation}
Then, we choose $|df_{\varepsilon}|^{2}<\varepsilon f_{\varepsilon}$ on $X$. This yields
\begin{equation}\nonumber
\begin{split}
|\langle d_{A}\b,f_{\varepsilon}\a\rangle_{L^{2}(X)}&=|\langle\b, df_{\varepsilon}\wedge\a\rangle_{L^{2}(X)}|\\
&\leq \varepsilon\|\a\|_{L^{2}(X)}\cdot\|\b\|_{L^{2}(X)}\\
&\rightarrow0,\ for\ \varepsilon\rightarrow0.
\end{split}
\end{equation}
Since $\lim_{\varepsilon\rightarrow0}f_{\varepsilon}\a(x)=\a(x)$, it follows from the dominated convergence theorem that
\begin{equation}\nonumber
\lim_{\varepsilon\rightarrow0}\langle d_{A}\b,f_{\varepsilon}\a\rangle_{L^{2}(X)}=\|\a\|^{2}_{L^{2}(X)}.
\end{equation}
Combining the preceding inequalities yields $\a=0$.
\end{proof}
\begin{proof}[\textbf{Proof of Theorem \ref{T1}}]
Let $k=p+q$. We denote $\b=\theta\wedge\w^{k-1}\wedge\a$. It's easy to see  $$d_{A}\b=d_{A}(\theta\wedge\w^{k-1}\wedge\a)=L^{k}\a.$$
  Since $\a$ is in $L^{2}$ and $\theta\wedge\w^{k-1}$ is bounded, 
$$\|\b\|_{L^{2}(X)}\leq\|\a\|_{L^{2}(X)}\|\theta\wedge\w^{k-1}\|_{L^{\infty}(X)},$$
i.e., $\b$ is also in $L^{2}$.Thus the form $L^{k}\a=\w^{k}\wedge\a$ is $d_{A}(L^{2})$

Furthermore, if $\a$ is harmonic, then following Lemma \ref{L3.4}, $L^{k}\a$ is harmonic with respect to $\De_{A}$. Thus from Proposition \ref{P3.5}, we have $L^{k}\a=0$
for any $k>0$.  Following Lemma \ref{L1}, it implies that $\a=0$, unless $p+q=n$.
\end{proof}
We also like to  give a lower bound on the spectra of the Laplace operator $\De_{A}$ on $L^{2}$-forms $\Om^{p}$ for $p\neq n$ to sharpen the Lefschetz vanishing theorem \ref{T1}. In \cite{Gro}, Gromov used the property $L^{k}$ commuted with $\De$ to give a lower bound. Although we can not get a similar result for all $L^{2}$-forms, the lower bound on the spectra of the Laplace operator $\De_{A}$  will be proved for the primitive $L^{2}$-forms.
\begin{proposition}
Let $(X,\w)$ be a complete, K\"{a}hler manifold, $\dim_{\C}X=n$, with a $d$(bounded) K\"{a}hler form $\w$, i.e. there exists a bounded $1$-form $\theta$ such that $\w=d\theta$. Then for any $L^{2}$-form $\a\in P^{i,j}:=P^{k}\cap\Om^{i,j}(X,E)$ on $X$ of degree $k:=i+j\neq n$ satisfies the inequality
\begin{equation}\label{E8}
c_{n,k}\|\theta\|^{-2}_{L^{\infty}(X)}\|\a\|^{2}_{L^{2}(X)}\leq\langle\a,\De_{A}\a\rangle_{L^{2}(X)},
\end{equation}
where $c_{n,k}>0$ is a constant which depends only on $n,k$.
\end{proposition}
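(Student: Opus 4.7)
Since $P^k = 0$ for $k > n$ by Lemma \ref{L1}(i), we may assume $k := i+j \leq n-1$, so $n-k \geq 1$. The key observation is that $L^{n-k}$ acts isometrically up to the scalar $(n-k)!$ on $P^{i,j}$: by Lemma \ref{L1}(ii) one has $\ast L^{n-k}\a = (n-k)!(\sqrt{-1})^{i^2-j^2}(-1)^{ij}\a$, so pointwise $|L^{n-k}\a| = (n-k)!|\a|$, and hence $\|L^{n-k}\a\|^2_{L^2} = ((n-k)!)^2\|\a\|^2_{L^2}$. It therefore suffices to bound $\|L^{n-k}\a\|^2_{L^2}$ by $\|\theta\|^2_{L^\infty}\langle\a,\De_A\a\rangle_{L^2}$. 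Using $\w = d\theta$ and $d\w = 0$ one computes
\begin{equation*}
L^{n-k}\a \;=\; d_A\bigl(\theta\wedge\w^{n-k-1}\wedge\a\bigr) \;+\; \theta\wedge\w^{n-k-1}\wedge d_A\a.
\end{equation*}

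Next, pair both sides with $L^{n-k}\a$ in $L^2$ and integrate by parts against the $d_A$-term; the integration by parts on the complete noncompact manifold is justified by inserting the cutoff sequence $\{f_\varepsilon\}$ and passing to the limit, exactly as in Lemma \ref{L3.2} and Proposition \ref{P3.5} (we may assume $d_A\a,d_A^*\a \in L^2$, else the target inequality is trivial via the quadratic form extension). This yields
\begin{equation*}
((n-k)!)^2\|\a\|^2_{L^2} = \bigl\langle d_A^*(L^{n-k}\a),\,\theta\wedge\w^{n-k-1}\wedge\a\bigr\rangle_{L^2} + \bigl\langle L^{n-k}\a,\,\theta\wedge\w^{n-k-1}\wedge d_A\a\bigr\rangle_{L^2}.
\end{equation*}
The K\"ahler identities of Proposition \ref{P3} give $[L,d_A^*] = \sqrt{-1}(\bar{\pa}_A - \pa_A)$, and since $L$ commutes with $d_A$, $\pa_A$ and $\bar{\pa}_A$ (as $d\w = 0$), an induction on the exponent produces
\begin{equation*}
d_A^*(L^{n-k}\a) \;=\; L^{n-k}d_A^*\a \;-\; (n-k)\sqrt{-1}\,L^{n-k-1}(\bar{\pa}_A - \pa_A)\a.
\end{equation*}

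Finally, because $\pa_A\a \in \Om^{i+1,j}(X,E)$ and $\bar{\pa}_A\a \in \Om^{i,j+1}(X,E)$ sit in orthogonal bidegrees, $|(\bar{\pa}_A - \pa_A)\a|^2 = |\pa_A\a|^2 + |\bar{\pa}_A\a|^2 = |d_A\a|^2$ pointwise, while the operators $L^j$ are pointwise bounded on forms of fixed degree by constants depending only on $n$ and that degree. Combined with $|\theta\wedge\w^{n-k-1}| \leq C(n,k)\|\theta\|_{L^\infty(X)}$ and Cauchy-Schwarz applied to the two inner products above, one arrives at
\begin{equation*}
((n-k)!)^2\|\a\|^2_{L^2} \leq C(n,k)\,\|\theta\|_{L^\infty(X)}\bigl(\|d_A\a\|_{L^2} + \|d_A^*\a\|_{L^2}\bigr)\|\a\|_{L^2}.
\end{equation*}
Dividing by $\|\a\|_{L^2}$, squaring, and using $(a+b)^2 \leq 2(a^2 + b^2)$ then gives the desired inequality with $c_{n,k}$ proportional to $((n-k)!)^4/C(n,k)^2$. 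The principal technical obstacle is the cutoff-function justification of the integration by parts for $L^2$-forms that are not a priori compactly supported, which is the standard Gaffney-type argument already present throughout the paper. What lets the proof go through without the assumptions $d_A\in\mathcal{A}^{1,1}_E$ or smallness of $\|F^{0,2}_A\|$ used in Theorem \ref{T3} is precisely the primitivity hypothesis $\La\a = 0$: it replaces the commutativity $[L^k,\De_A] = 0$ (which fails in general) by the isometry $\|L^{n-k}\a\|_{L^2} = (n-k)!\|\a\|_{L^2}$.
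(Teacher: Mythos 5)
Your proof is correct and follows essentially the same route as the paper: both exploit that $L^{n-k}$ is a pointwise isometry (up to $(n-k)!$) on primitive $(i,j)$-forms, write $\w^{n-k}\wedge\a=d_{A}(\theta\wedge\w^{n-k-1}\wedge\a)\pm\theta\wedge\w^{n-k-1}\wedge d_{A}\a$, and conclude by Cauchy--Schwarz after a cutoff-justified integration by parts. The only cosmetic difference is that you evaluate $d_{A}^{\ast}(L^{n-k}\a)$ via the K\"ahler commutators $[d_{A}^{\ast},L^{m}]$, whereas the paper works with $\ast\a$ and uses $d_{A}^{\ast}=-\ast d_{A}\ast$ to reduce the adjoint term directly to $\ast d_{A}\a$; both are controlled by $\langle\De_{A}\a,\a\rangle^{1/2}$.
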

\begin{proof}
Inequality (\ref{E8}) makes sense, strictly speaking, if $\De_{A}\a$ (as well as $\a$) is in $L^{2}$. In this case $(d_{A}+d^{\ast}_{A})\a$ is also  in $L^{2}$ by the proof
of Lemma \ref{L3.2} and (\ref{E8}) is equivalent to
\begin{equation}\label{E+}
\|(d_{A}+d_{A}^{\ast})\a\|_{L^{2}(X)}\geq c_{0}\|\a\|_{L^{2}(X)}.
\end{equation}
Moreover the cutoff argument in this section shows that the general case of (\ref{E+}), where we only assume $\a$ and $(d_{A}+d_{A}^{\ast})\a$ in $L^{2}$, follows from that where $\a$ is a smooth function with compact support. In particular, inequality (\ref{E8}) with $\a$ in $L^{2}$ implies the general case of (\ref{E+}).

The linear map $L^{n-k}:\Om^{k}\rightarrow\Om^{2n-k}$ for $k\leq n-1$ is a bijective quasi-isometry on $P^{i,j}$ $(i+j=k)$, thus any $\a\in P^{i,j}$ satisfies
$$\a=C(n,k)\ast L^{n-k}\a=C(n,k)\ast(\a\wedge\w^{n-k}),$$
where $C(n,k)=\sqrt{-1}^{j^{2}-i^{2}}(-1)^{ij}\frac{1}{(n-k)!}$. We write $\ast\a=d_{A}\eta-\tilde{\a}$, for $$\eta=C(n,k)(\theta\wedge\a\wedge\w^{n-k-1}),\ \tilde{\a}=C(n,k)(\theta\wedge d_{A}\a\wedge\w^{n-k-1})$$
Noting that
$$\|\a\|_{L^{2}(X)}=\|\ast\a\|_{L^{2}(X)},\ \|\eta\|_{L^{2}(X)}\lesssim\|\theta\|_{L^{\infty}(X)}\|\a\|_{L^{2}(X)}.$$
We then have
$$\|\tilde{\a}\|_{L^{2}(X)}\lesssim \|\theta\|_{L^{\infty}(X)}\|d_{A}\a\|^{2}_{L^{2}(X)} \lesssim\|\theta\|_{L^{\infty}(X)}\langle\De_{A}\a,\a\rangle_{L^{2}(X)}^{1/2}.$$
We observe that
\begin{equation}\nonumber
\begin{split}
|\langle\ast\a,d_{A}\eta\rangle_{L^{2}(X)}|&=|\langle\ast d_{A}\a,\eta\rangle_{L^{2}(X)}|\\
&\leq\|d_{A}\a\|_{L^{2}(X)}\|\eta\|_{L^{2}(X)}\\
&\leq\langle\De_{A}\a,\a\rangle_{L^{2}(X)}^{1/2}\|\eta\|_{L^{2}(X)}\\
&\lesssim\langle\De_{A}\a,\a\rangle_{L^{2}(X)}^{1/2}\|\theta\|_{L^{\infty}(X)}\|\a\|_{L^{2}(X)}\\
\end{split}
\end{equation}
and
\begin{equation}\nonumber
|\langle\a,\tilde{\a}\rangle|_{L^{2}(X)}\leq\|\a\|_{L^{2}(X)}\|\tilde{\a}\|_{L^{2}(X)}\lesssim\langle\De_{A}\a,\a\rangle_{L^{2}(X)}^{1/2}\|\theta\|_{L^{\infty}(X)}\|\a\|_{L^{2}(X)}.
\end{equation}
It now follows from above inequalities that 
\begin{equation}\nonumber
\begin{split}
\|\a\|_{L^{2}(X)}&=\|\ast\a\|_{L^{2}(X)}\\
&\leq |\langle\ast\a,d_{A}\eta\rangle_{L^{2}(X)}|+|\langle\ast\a,\tilde{\a}\rangle|_{L^{2}(X)}\\
&\lesssim\|\theta\|_{L^{\infty}(X)}\langle\De_{A}\a,\a\rangle_{L^{2}(X)}^{1/2}.\\
\end{split}
\end{equation}
The case $k>n$ follows by the Poincare duality as the operator $\ast:\Om^{k}\rightarrow\Om^{2n-k}$ commutes with $\De_{A}$.
\end{proof}
\subsection{Uniform positive lower bounds for the least eigenvalue of $\De_{A}$}
There are several commutation relations between the basic operators associated to a K\"{a}hler manifold $(X,\w)$, all following more or less directly from the K\"{a}hler condition $d\w=0$. Taken together, these are referred to as the K\"{a}hler identities. At first, we observe that the operator $L^{k}$ commutes with $\De_{A}$ for any connection $d_{A}\in\mathcal{A}_{E}^{1,1}$.
\begin{lemma}\label{L3}
$$[\De_{A},L^{k}]=2k\sqrt{-1}(F^{2,0}_{A}-F^{0,2}_{A})L^{k-1},\ \forall\ k\in\mathbb{N}.$$
In particular, if the connection $d_{A}\in\mathcal{A}_{E}^{1,1}$, then $\De_{A}$ commutes with $L^{k}$ for any $k\in\mathbb{N}$.
\end{lemma}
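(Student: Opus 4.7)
The strategy is to reduce $[\De_{A}, L^{k}]$ to a computation built on $[d_{A}^{\ast}, L^{k}]$, which the K\"ahler identities of Proposition \ref{P3} allow us to evaluate in closed form. First I would note that $[d_{A}, L^{k}] = 0$: since $\w$ is a scalar-valued closed form, $d_{A}(\w^{k}\wedge\a) = \w^{k}\wedge d_{A}\a$ for any $E$-valued form $\a$. The same reasoning, applied to the bidegree decomposition $d\w = \pa\w + \bar{\pa}\w = 0$, shows that $\pa_{A}$ and $\bar{\pa}_{A}$ each commute with $L$, and hence with every power $L^{k-1}$. These two facts will be used throughout.

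Next I would establish by induction on $k$ the identity
$$[d_{A}^{\ast}, L^{k}] = k\sqrt{-1}(\pa_{A} - \bar{\pa}_{A}) L^{k-1}.$$
The base case $k=1$ is Proposition \ref{P3}: $[d_{A}^{\ast}, L] = [\pa_{A}^{\ast}, L] + [\bar{\pa}_{A}^{\ast}, L] = -\sqrt{-1}\bar{\pa}_{A} + \sqrt{-1}\pa_{A}$. The inductive step uses the Leibniz rule $[d_{A}^{\ast}, L\cdot L^{k-1}] = [d_{A}^{\ast}, L]L^{k-1} + L[d_{A}^{\ast}, L^{k-1}]$, the inductive hypothesis, and the commutativity of $L$ with $\pa_{A} - \bar{\pa}_{A}$.

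With this identity in hand, I would write
$$[\De_{A}, L^{k}] = [d_{A}d_{A}^{\ast} + d_{A}^{\ast}d_{A}, L^{k}] = d_{A}[d_{A}^{\ast}, L^{k}] + [d_{A}^{\ast}, L^{k}]d_{A},$$
the two other Leibniz terms vanishing because $[d_{A}, L^{k}] = 0$. Substituting the formula above and moving $L^{k-1}$ through $d_{A}$ and $\pa_{A} - \bar{\pa}_{A}$ to the right gives
$$[\De_{A}, L^{k}] = k\sqrt{-1}\bigl(d_{A}(\pa_{A} - \bar{\pa}_{A}) + (\pa_{A} - \bar{\pa}_{A})d_{A}\bigr)L^{k-1}.$$
Expanding $d_{A} = \pa_{A} + \bar{\pa}_{A}$ and using that $\pa_{A}^{2}$ and $\bar{\pa}_{A}^{2}$ act as exterior multiplication by $F^{2,0}_{A}$ and $F^{0,2}_{A}$ respectively, the cross terms $\pa_{A}\bar{\pa}_{A}$ and $\bar{\pa}_{A}\pa_{A}$ cancel (they would have contributed an $F^{1,1}_{A}$-factor), and the expression collapses to $2(F^{2,0}_{A} - F^{0,2}_{A})L^{k-1}$. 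Multiplying through yields the claimed formula, and the specialization to $d_{A} \in \mathcal{A}^{1,1}_{E}$ is immediate from $F^{2,0}_{A} = F^{0,2}_{A} = 0$.

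The only delicate point is the sign bookkeeping when expanding the anticommutator $d_{A}(\pa_{A} - \bar{\pa}_{A}) + (\pa_{A} - \bar{\pa}_{A})d_{A}$; once it is written out term by term, the $(1,1)$-curvature contributions from $\pa_{A}\bar{\pa}_{A}$ and $\bar{\pa}_{A}\pa_{A}$ cancel automatically and the formula is essentially forced by the K\"ahler identities.
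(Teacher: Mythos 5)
Your proof is correct and is essentially the paper's argument: both rest on $[d_{A},L]=0$, the K\"ahler identities $[\pa_{A}^{\ast},L]=-\sqrt{-1}\bar{\pa}_{A}$, $[\bar{\pa}_{A}^{\ast},L]=\sqrt{-1}\pa_{A}$, a graded Leibniz/Jacobi induction in $k$, and the identification of $\pa_{A}^{2}$, $\bar{\pa}_{A}^{2}$ with $F_{A}^{2,0}$, $F_{A}^{0,2}$. The only (cosmetic) difference is that you run the induction on $[d_{A}^{\ast},L^{k}]$ and then expand $[\De_{A},L^{k}]=d_{A}[d_{A}^{\ast},L^{k}]+[d_{A}^{\ast},L^{k}]d_{A}$ once, whereas the paper settles $k=1$ via the Jacobi identity and inducts directly on $[\De_{A},L^{k}]$; your sign bookkeeping is in fact cleaner than the paper's, whose displayed induction step contains a sign slip.
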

\begin{proof}
The case of $k=1$: the operators $d_{A}$,\ $d^{\ast}_{A}$ and $L$ satisfy the following Jacobi identity:
$$-\big{[}L,[d_{A},d^{\ast}_{A}]\big{]}+\big{[}d^{\ast}_{A},[L,d_{A}]\big{]}+\big{[}d_{A},[d^{\ast}_{A},L]\big{]}=0.$$
Then we have
\begin{equation}\nonumber
\begin{split}
[L,\De_{A}]&=\big{[}d_{A},[d^{\ast}_{A},L]\big{]}\\
&=[\pa_{A}+\bar{\pa}_{A},\sqrt{-1}(\pa_{A}-\bar{\pa}_{A})]\\
&=[\sqrt{-1}\pa_{A},\pa_{A}]-[\sqrt{-1}\bar{\pa}_{A},\bar{\pa}_{A}]\\
&=2\sqrt{-1}(F^{2,0}_{A}-F^{0,2}_{A}).\\
\end{split}
\end{equation}
We suppose that the case of $p=k-1$ is true, i.e., $$[\De_{A},L^{k-1}]=2(k-1)\sqrt{-1}(F^{2,0}_{A}-F^{0,2}_{A})L^{k-2}.$$ Thus if $p=k$, we have
\begin{equation}\nonumber
\begin{split}
[\De_{A},L^{k}]&=[\De_{A},L]L^{k-1}+L[\De_{A},L^{k-1}]\\
&=2\sqrt{-1}(F^{0,2}_{A}-F^{2,0}_{A})L^{k-1}+2(k-1)\sqrt{-1}L(F^{2,0}_{A}-F^{0,2}_{A})L^{k-2}\\
&=2k\sqrt{-1}(F^{2,0}_{A}-F^{0,2}_{A})L^{k-1}.
\end{split}
\end{equation}
If $d_{A}\in\mathcal{A}_{E}^{1,1}$, then $[\De_{A},L^{k}]=0$.
\end{proof}
\begin{proof}[\textbf{Proof of Theorem \ref{T3}}] \ \\
\noindent \textbf{Case I: $d_{A}\in\mathcal{A}_{E}^{1,1}$}\\
Let $\a$ be a $p$-form on vector bundle, we denote $\b=L^{k}\a=\w^{k}\wedge\a$. We recall the operator $L^{k}:\Om^{p}(X,E)\rightarrow\Om^{p+2k}(X,E)$ for a given $p<n$ and $p+k=n$. Since the Lefschetz theorem $L^{k}$ is a bijective quasi-isometry,  
$$\|\a\|_{L^{2}(X)}\thickapprox\|\b\|_{L^{2}(X)}.$$ 
If $\a$ is in $L^{2}$, $\b$ is also in $L^{2}$. Since $F_{A}^{0,2}=0$, following Lemma \ref{L3}, $[L^{k},\De_{A}]=0$. Then we have
$$\langle\De_{A}\b,\b\rangle_{L^{2}(X)}=\langle L^{k}(\De_{A}\a)\,L^{k}\a\rangle_{L^{2}(X)}\approx\langle\De_{A}\a,\a\rangle_{L^{2}(X)}.$$
We write $\b=d_{A}\eta-\tilde{\a}$, for $\eta=\a\wedge\w^{k-1}\wedge\theta$ and $\tilde{\a}=d_{A}\a\wedge\w^{k-1}\wedge\theta$. Observe that
\begin{equation}\label{E01}
\|\eta\|_{L^{2}(X)}\lesssim\|\theta\|_{L^{\infty}(X)}\|\a\|_{L^{2}(X)}\lesssim\|\theta\|_{L^{\infty}(X)}\|\b\|_{L^{2}(X)},
\end{equation}
and
\begin{equation}\label{E02}
\begin{split}
\|\tilde{\a}\|_{L^{2}(X)}&\lesssim\|d_{A}\a\|_{L^{2}(X)}\|\theta\|_{L^{\infty}(X)}\lesssim\langle\De_{A}\a,\a\rangle_{L^{2}(X)}^{1/2}\|\theta\|_{L^{\infty}(X)}.\\
\end{split}
\end{equation}
We then have
\begin{equation}\nonumber
\begin{split}
\|\b\|^{2}_{L^{2}(X)}&\leq|\langle\b,d_{A}\eta\rangle_{L^{2}(X)}|+|\langle\b,\tilde{\a}\rangle_{L^{2}(X)}|\\
&\leq|\langle d_{A}^{\ast}\b,\eta\rangle_{L^{2}(X)}|+|\langle\b,\tilde{\a}\rangle_{L^{2}(X)}|\\
&\lesssim\langle\De_{A}\b,\b\rangle_{L^{2}(X)}^{1/2}\|\theta\|_{L^{\infty}(X)}\|\b\|_{L^{2}(X)}
+\|\b\|_{L^{2}(X)}\|d_{A}\a\|_{L^{2}(X)}\|\theta\|_{L^{\infty}(X)}\\
&\lesssim\langle\De_{A}\a,\a\rangle_{L^{2}(X)}^{1/2}\|\theta\|_{L^{\infty}(X)}\|\b\|_{L^{2}(X)}.\\
\end{split}
\end{equation}
This yields the desired estimation
$$
\|\a\|^{2}_{L^{2}(X)}\lesssim\|\b\|^{2}_{L^{2}(X)}
\lesssim\|\theta\|^{2}_{L^{\infty}(X)}\langle\De_{A}\a,\a\rangle_{L^{2}(X)}.$$
\textbf{Case II: $d_{A}\notin\mathcal{A}^{1,1}_{E}$}\\
Now, we begin to  consider the case which the $(0,2)$-part of the curvature is non-zero. We denote by $\a$ a $L^{2}$ harmonic $p$-form, $p<n$ and $\b=L^{k}\a$, $k=n-p$, thus 
\begin{equation}\label{E04}
\|\a\|_{L^{2}(X)}\thickapprox\|\b\|_{L^{2}(X)},\  \langle\De_{A}\a,\a\rangle_{L^{2}(X)}\thickapprox\langle L^{k}(\De_{A}\a),L^{k}\a\rangle_{L^{2}(X)}.
\end{equation} 
Following Lemma \ref{L3}, we obtain
\begin{equation}\nonumber
\begin{split}
\langle\De_{A}\b,\b\rangle_{L^{2}(X)}&=\langle[\De_{A},L^{k}]\a+L^{k}(\De_{A}\a),L^{k}\a\rangle_{L^{2}(X)}\\
&=\langle2k\sqrt{-1}(F^{2,0}_{A}-F^{0,2}_{A})L^{k-1}\a,L^{k}\a\rangle_{L^{2}(X)}+\langle L^{k}(\De_{A}\a),L^{k}\a\rangle_{L^{2}(X)}.\\
\end{split}
\end{equation}
Thus we have
\begin{equation}\label{E03}
\begin{split}
\langle\De_{A}\b,\b\rangle_{L^{2}(X)}
&\lesssim\int_{X}(|F_{A}^{0,2}|+|F_{A}^{2,0}|)|\a|^{2}+\langle L^{k}(\De_{A}\a),L^{k}\a\rangle_{L^{2}(X)}\\
&\lesssim\langle\De_{A}\a,\a\rangle_{L^{2}(X)}+\|F_{A}^{0,2}\|_{L^{n}(X)}\|\a\|^{2}_{L^{2n/n-1}(X)}\\
&\lesssim\langle\De_{A}\a,\a\rangle_{L^{2}(X)}+\|F_{A}^{0,2}\|_{L^{n}(X)}\|\a\|^{2}_{L^{2}_{1}(X)}.\\
\end{split}
\end{equation}
Following in the case I, we can write $\b=d_{A}\eta-\tilde{\a}$,\ for $\eta=\a\wedge\w^{k-1}\wedge\theta$ and $\tilde{\a}=d_{A}\a\wedge\w^{k-1}\wedge\theta$. Combining the Equations (\ref{E01}) and (\ref{E02}) with the estimates (\ref{E04})--(\ref{E03}) yields
\begin{equation}\label{E06}
\begin{split}
\|\b\|^{2}_{L^{2}(X)}&\leq |\langle\b,d_{A}\eta\rangle_{L^{2}(X)}|+|\langle\b,\tilde{\a}\rangle_{L^{2}(X)}|\\
&=|\langle d_{A}^{\ast}\b,\eta\rangle_{L^{2}(X)}|+|\langle\b,\tilde{\a}\rangle_{L^{2}(X)}|\\
&\lesssim\langle\De_{A}\b,\b\rangle_{L^{2}(X)}^{1/2}\|\theta\|_{L^{\infty}(X)}\|\a\|_{L^{2}(X)}
+\langle\De_{A}\a,\a\rangle_{L^{2}(X)}^{1/2}\|\theta\|_{L^{\infty}(X)}\|\b\|_{L^{2}(X)}\\
&\lesssim\langle(\De_{A}\b,\b\rangle_{L^{2}(X)}^{1/2}
+\langle\De_{A}\a,\a\rangle_{L^{2}(X)}^{1/2})\|\theta\|_{L^{\infty}(X)}\|\a\|_{L^{2}(X)}\\
\end{split}
\end{equation}
This yields the desired estimation
\begin{equation}\label{E07}
\begin{split}
\|\a\|^{2}_{L^{2}(X)}&\lesssim\|\b\|^{2}_{L^{2}(X)}\lesssim\|\theta\|_{L^{\infty}(X)}\|\a\|_{L^{2}(X)}
(\langle\De_{A}\a,\a\rangle_{L^{2}(X)}^{1/2}+\|F_{A}^{0,2}\|^{1/2}_{L^{n}(X)}\|\a\|_{L^{2}_{1}(X)})\\
&\lesssim\|\theta\|_{L^{\infty}(X)}\|\a\|_{L^{2}(X)}(\langle\De_{A}\a,\a\rangle^{1/2}_{L^{2}(X)}
+\|F_{A}^{0,2}\|^{1/2}_{L^{n}(X)}(\langle\De_{A}\a,\a\rangle_{L^{2}(X)}^{1/2}+\|\a\|_{L^{2}(X)})\\
&\lesssim\|\theta\|_{L^{\infty}(X)}
\big{(}(1+\|F^{0,2}_{A}\|^{1/2}_{L^{n}(X)})\|\a\|_{L^{2}(X)}\langle\De_{A}\a,\a\rangle_{L^{2}(X)}^{1/2}
+\|F^{0,2}_{A}\|^{1/2}_{L^{n}(X)}\|\a\|^{2}_{L^{2}(X)}\big{)}.
\end{split}
\end{equation}
If we replace the $L^{n}$-norm of $F_{A}^{0,2}$ to $L^{\infty}$-norm, then inequalities (\ref{E03}) and (\ref{E07}) replaced to
	\begin{equation}\label{E03'}
	\langle\De_{A}\b,\b\rangle_{L^{2}(X)}\lesssim\langle\De_{A}\a,\a\rangle_{L^{2}(X)}+\|F_{A}^{0,2}\|_{L^{\infty}(X)}\|\a\|^{2}_{L^{2}(X)}.
	\end{equation}
	and
	\begin{equation}\label{E07'}
	\begin{split}
	\|\a\|^{2}_{L^{2}(X)}&\lesssim\|\b\|^{2}_{L^{2}(X)}\lesssim\|\theta\|_{L^{\infty}(X)}\|\a\|_{L^{2}(X)}
	(\langle\De_{A}\a,\a\rangle_{L^{2}(X)}^{1/2}+\|F_{A}^{0,2}\|^{1/2}_{L^{\infty}(X)}\|\a\|_{L^{2}(X)})\\
	&\lesssim\|\theta\|_{L^{\infty}(X)}\|\a\|_{L^{2}(X)}\langle\De_{A}\a,\a\rangle^{1/2}_{L^{2}(X)}
	+\|\theta\|_{L^{\infty}(X)}\|F_{A}^{0,2}\|^{1/2}_{L^{\infty}(X)}\|\a\|^{2}_{L^{2}(X)}.\\
	\end{split}
	\end{equation}
Thus, we can choose $\|F^{0,2}_{A}\|_{L^{n}(X)}$ or $\|F_{A}^{0,2}\|_{L^{\infty}(X)}$ small enough to ensure (\ref{E08}).
\end{proof}
If we assume that $E$ possesses a flat Hermitian connection $d_{A}$ which means that $F_{A}=0$, or equivalently, that $E$ is given by a representation $\pi_{X}\rightarrow U(r)$. Thanks to our flatness assumption, there are orthogonal decompositions (\cite[Chapter VIII Theorem 3.2]{Dem} )
$$\Om^{k}_{(2)}(X)\otimes E=\mathcal{H}^{k}_{(2)}(X,E)\oplus\overline{{\rm{Im}} d_{A}}\oplus\overline{{\rm{Im}} d^{\ast}_{A}}.$$
We denote by $H^{p}_{DR}(X,E)$ the $L^{2}$ de Rham cohomology groups, namely the cohomology groups of complex $(K^{\bullet},d_{A})$ defined by
$$K^{p}:=\{u\in\Om^{p}_{(2)}(X)\otimes E: d_{A}u=0\}.$$
In other words,\ one has $H_{DR}^{p}(X,E)=\ker{d_{A}}/{\rm {Im}}{d_{A}}$, where $d_{A}$ is a $L^{2}$-extension of the connection calculated in the sense of distributions.
\begin{theorem}
Let $(X,\w)$ be a complete, K\"{a}hler manifold, $\dim_{\C}X=n$, with a $d$(bounded) K\"{a}hler form $\w$, i.e. there exists a bounded $1$-form $\theta$ such that $\w=d\theta$. If $d_{A}$ is a flat connection on $E$ and $H_{DR}^{k}(X,E)$ ($k\neq n$) is finite dimensional, then
$$H^{k}_{DR}(X,E)=0.$$
\end{theorem}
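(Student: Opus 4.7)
The plan is to combine Theorem~\ref{T3} with the Hodge--Kodaira orthogonal decomposition cited just before the theorem statement, and then to upgrade the resulting vanishing of the reduced $L^2$-cohomology to that of the unreduced cohomology via the open mapping theorem, exploiting the finite-dimensionality hypothesis. Since $d_A$ is flat, one has $F_A^{2,0}=F_A^{0,2}=0$, so $d_A\in\mathcal{A}^{1,1}_E$; case~(1) of Theorem~\ref{T3} then yields $\mathcal{H}^k_{(2)}(X,E)=0$ for every $k\neq n$. Using the decomposition
$$\Om^k_{(2)}(X)\otimes E=\mathcal{H}^k_{(2)}(X,E)\oplus\overline{\operatorname{Im}d_A}\oplus\overline{\operatorname{Im}d_A^{\ast}}$$
together with the orthogonality $\ker d_A\perp\overline{\operatorname{Im}d_A^{\ast}}$, I obtain $\ker d_A=\overline{\operatorname{Im}d_A}$ and hence
$$H^k_{DR}(X,E)=\overline{\operatorname{Im}d_A}\big/\operatorname{Im}d_A.$$
The remaining task is to show that finite dimensionality of this quotient forces $\operatorname{Im}d_A=\overline{\operatorname{Im}d_A}$.

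For this, I view $d_A$ as a closed densely defined unbounded operator into $V:=\overline{\operatorname{Im}d_A}$, and endow its domain $D(d_A)$ with the graph norm, so that $D(d_A)/\ker d_A$ becomes a Banach space. The induced map $\tilde d_A:D(d_A)/\ker d_A\to V$ is then a bounded injective operator with dense image $\operatorname{Im}d_A$. Setting $m:=\dim H^k_{DR}(X,E)$ and choosing $v_1,\ldots,v_m\in V$ whose classes span $V/\operatorname{Im}d_A$, I consider the bounded linear surjection
$$\Phi:\bigl(D(d_A)/\ker d_A\bigr)\oplus\mathbb{C}^m\longrightarrow V,\qquad\Phi(u,c)=\tilde d_A u+\sum_{i=1}^{m}c_i v_i,$$
which by the open mapping theorem is an open map between Banach spaces. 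Linear independence of the $v_i$ modulo $\operatorname{Im}d_A$ gives $\Phi(u,c)\in\operatorname{Im}d_A$ if and only if $c=0$, so the open set $\bigl(D(d_A)/\ker d_A\bigr)\oplus(\mathbb{C}^m\setminus\{0\})$ is mapped by $\Phi$ precisely onto $V\setminus\operatorname{Im}d_A$. Hence $V\setminus\operatorname{Im}d_A$ is open in $V$, meaning $\operatorname{Im}d_A$ is closed; since it is also dense, it equals $V$, so $H^k_{DR}(X,E)=0$.

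I expect the final, functional-analytic step to be the main obstacle. Without the finite-dimensionality assumption, a dense subspace of a Banach space may be proper (e.g.\ the kernel of a discontinuous linear functional), so in general reduced and unreduced $L^2$-cohomology differ; this is precisely why the earlier Gromov-type vanishing theorems for $\mathcal{H}^k_{(2)}(X,E)$ do not by themselves give vanishing of $H^k_{DR}(X,E)$. The hypothesis $\dim H^k_{DR}(X,E)<\infty$ is exactly what allows the open mapping theorem to force closed range on $d_A$, thereby providing the bridge from the vanishing of $\mathcal{H}^k_{(2)}(X,E)$ to that of $H^k_{DR}(X,E)$.
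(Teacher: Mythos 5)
The paper states this theorem without giving a proof, so there is nothing to compare against line by line; your argument is correct and supplies exactly the deduction the paper evidently intends: flatness gives $d_{A}\in\mathcal{A}^{1,1}_{E}$ (and $d_{A}^{2}=F_{A}=0$, which is what makes ${\rm Im}\, d_{A}\subset\ker d_{A}$), Theorem \ref{T3}(1) kills $\mathcal{H}^{k}_{(2)}(X,E)$, the cited orthogonal decomposition identifies $H^{k}_{DR}(X,E)$ with $\overline{{\rm Im}\, d_{A}}/{\rm Im}\, d_{A}$, and the open-mapping argument shows a finite-codimensional operator range is closed. One small remark on your closing discussion: the kernel of a discontinuous linear functional is a dense proper subspace of \emph{finite} codimension, so finite codimension of an arbitrary subspace would not suffice; what your proof actually (and correctly) uses is that ${\rm Im}\, d_{A}$ is the range of a bounded operator from the Banach space $D(d_{A})/\ker d_{A}$, and it is this operator-range structure together with finite codimension that forces closedness.
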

We also can extend the vanishing Theorem \ref{T1} to the case of sublinear growing one form $\theta$. The method of this proof  follows the idea in \cite{CX}.
\begin{proof}[\textbf{Proof of Theorem \ref{T5}}]
By hypothesis, there exists a 1-form $\theta$ with $d\theta=\w$ and
$$\|\theta(x)\|_{L^{\infty}(X)}\leq c(1+\rho(x,x_{0})),$$
where $c$ is an absolute constant. In what follows we assume that the distance function $\rho(x, x_{0})$ is smooth for $x\neq x_{0}$. The general case follows easily by an approximation argument.
	
Let $\eta:\mathbb{R}\rightarrow\mathbb{R}$ be smooth, $0\leq\eta\leq1$,
$$\eta(t)=\left\{
\begin{aligned}
1, &  & t\leq0 \\
0,  &  & t\geq1
\end{aligned}
\right.
$$
and consider the compactly supported function
$$f_{j}(x)=\eta(\rho(x_{0},x)-j),$$
where $j$ is a positive integer.
	
Let $\a$ be a harmonic $k$-form in $\Om^{k}_{(2)}(X,E)$, and consider the form $\b=\theta\wedge\a$.  From Lemma \ref{L3}, we have  $\w\wedge\a\in \mathcal{H}_{(2)}^{k+2}(X,E)$, thus $d^{\ast}_{A}(\w\wedge\a)=0$. Noticing that $f_{j}\b$ has compact support, one has
$$0=\langle d^{\ast}_{A}(\w\wedge\a),f_{j}\b\rangle_{L^{2}(X)}=\langle\w\wedge\a,d_{A}(f_{j}\b)\rangle_{L^{2}(X)}.$$
We further note that, since $\w=d\theta$ and $d_{A}\a=0$,
\begin{equation}\label{E3}
\begin{split}
0&=\langle\w\wedge\a, d_{A}(f_{j}\b)\rangle_{L^{2}(X)}\\
&=\langle\w\wedge\a, f_{j}d_{A}\b\rangle_{L^{2}(X)}+\langle\w\wedge\a, df_{j}\wedge\b\rangle_{L^{2}(X)}\\
&=\langle\w\wedge\a, f_{j}\w\wedge\a\rangle_{L^{2}(X)}+\langle\w\wedge\a, df_{j}\wedge\b\rangle_{L^{2}(X)}\\
&=\langle\w\wedge\a, f_{j}\w\wedge\a\rangle_{L^{2}(X)}+\langle\w\wedge\a, df_{j}\wedge\theta\wedge\a\rangle_{L^{2}(X)}.\\
\end{split}
\end{equation}
Since $0\leq f_{j}\leq 1$ and $\lim_{j\rightarrow\infty}f_{j}(x)(\w\wedge\a)(x)=(\w\wedge\a)(x)$, it follows from the dominated convergence theorem that
\begin{equation}\label{E4}
\lim_{j\rightarrow\infty}\langle\w\wedge\a, f_{j}\w\wedge\a\rangle_{L^{2}(X)}=\|\w\wedge\a\|^{2}_{L^{2}(X)}.
\end{equation}
Since $\w$ is bounded, $supp(df_{j})\subset B_{j+1}\backslash B_{j}$ and $\|\theta(x)\|_{L^{\infty}}=O(\rho(x_{0},x))$, one obtains that
\begin{equation}\label{E2}
|\langle\w\wedge\a,df_{j}\wedge\theta\wedge\a\rangle_{L^{2}(X)}|\leq (j+1)C\int_{B_{j+1}\backslash B_{j}}|\a(x)|^{2}dx,
\end{equation}
where $C$ is a constant independent of $j$.
	
We claim that there exists a subsequence $\{j_{i}\}_{i\geq1}$ such that
\begin{equation}\label{E1}
\lim_{i\rightarrow\infty}(j_{i}+1)\int_{B_{j_{i}+1}\backslash B_{j_{i}}}|\a(x)|^{2}dx=0.
\end{equation}
If not, there exists a positive constant $a$ such that
$$\lim_{j\rightarrow\infty}(j+1)\int_{B_{j+1}\backslash B_{j}}|\a(x)|^{2}dx\geq a>0,\ j\geq1.$$
This inequality implies
\begin{equation}\nonumber
\begin{split}
\int_{X}|\a(x)|^{2}dx&=\sum_{j=0}^{\infty}\int_{B_{j+1}\backslash B_{j}}|\a(x)|^{2}dx\\
&\geq a\sum_{j=0}^{\infty}\frac{1}{j+1}\\
&=+\infty\\
\end{split}
\end{equation}
which is a contradiction to the assumption $\int_{X}|\a(x)|^{2}dx<\infty$.\ Hence,\ there exists a subsequence $\{j_{i}\}_{i\geq1}$ for which (\ref{E1}) holds.\ Using (\ref{E2}) and (\ref{E1}),\ one obtains
\begin{equation}\label{E5}
\lim_{i\rightarrow\infty}\langle\w\wedge\a, df_{j_{i}}\wedge\theta\wedge\a\rangle_{L^{2}(X)}=0
\end{equation}
It now follows from (\ref{E3}), (\ref{E4}) and (\ref{E5}) that $\w\wedge\a=0$. Following Lemma \ref{L1}, we have $\a=0$. 
\end{proof}
We will construct a priori estimate for any smooth form on vector bundle under the condition that $L^{n}$-norm of the curvature $F_{A}$ is small enough.
\begin{lemma}\label{L4}
Let $(X,\w)$ be a complete, K\"{a}hler manifold, $\dim_{\C}X=n$, with a $d$(bounded) K\"{a}hler form $\w$, $E$ be a Hermitian vector bundle over $X$ and $d_{A}$ be a Hermitian connection on $E$. Suppose that the Riemannian curvature tensor of the K\"{a}hler metric $g$ on $X$ is bounded. Then there are constants $\varepsilon=\varepsilon(X,n)\in(0,1]$ and $C=C(X,n)\in[1,\infty)$ with following significance. If the curvature $F_{A}$ of the connection $d_{A}$ obeys\\
(1) $\|F_{A}\|_{L^{n}(X)}\leq \varepsilon$ or\\
(2) $F_{A}$ has a bounded $L^{\infty}$-norm,\\
then any $\a\in\Om^{k}_{(2)}(X,E)$ satisfies
\begin{equation}\label{E3.7}
\|\na_{A}\a\|^{2}_{L^{2}(X)}\leq C(\langle\De_{A}\a,\a\rangle_{L^{2}(X)}+\|\a\|^{2}_{L^{2}(X)}).
\end{equation}
\end{lemma}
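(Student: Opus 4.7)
The plan is to run a cutoff Bochner--Weitzenb\"ock argument. For a $k$-form $\alpha$ on $E$, the Weitzenb\"ock identity reads
$$\Delta_{A}\alpha = \nabla_{A}^{\ast}\nabla_{A}\alpha + \mathfrak{R}(\alpha) + \mathfrak{F}_{A}(\alpha),$$
where $\mathfrak{R}$ is a zero-order operator built algebraically from the Riemannian curvature tensor (so $|\mathfrak{R}(\alpha)|\lesssim|\mathrm{Riem}_{g}||\alpha|$) and $\mathfrak{F}_{A}$ is an algebraic operator built from the bundle curvature ($|\mathfrak{F}_{A}(\alpha)|\lesssim|F_{A}||\alpha|$). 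Pick Gromov's cutoffs $\{f_{\varepsilon}\}$ as in Lemma~\ref{L3.2}: compactly supported, $0\leq f_{\varepsilon}\leq 1$, $f_{\varepsilon}\nearrow 1$ pointwise, with $|df_{\varepsilon}|\leq\varepsilon$. Pairing the Weitzenb\"ock identity with $f_{\varepsilon}^{2}\alpha$ and integrating by parts yields
$$\int_{X}f_{\varepsilon}^{2}|\nabla_{A}\alpha|^{2} = \int_{X}f_{\varepsilon}^{2}\langle\Delta_{A}\alpha,\alpha\rangle - \int_{X}f_{\varepsilon}^{2}\langle\mathfrak{R}(\alpha)+\mathfrak{F}_{A}(\alpha),\alpha\rangle - 2\int_{X}f_{\varepsilon}\langle\nabla_{A}\alpha,\,df_{\varepsilon}\otimes\alpha\rangle.$$

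The cutoff error term is handled by Cauchy--Schwarz, which absorbs $\tfrac{1}{2}\int f_{\varepsilon}^{2}|\nabla_{A}\alpha|^{2}$ into the left side at the cost of a harmless $O(\varepsilon^{2})\|\alpha\|^{2}_{L^{2}(X)}$ on the right. Boundedness of $\mathrm{Riem}_{g}$ gives $|\int f_{\varepsilon}^{2}\langle\mathfrak{R}(\alpha),\alpha\rangle|\leq C_{g}\|\alpha\|^{2}_{L^{2}(X)}$. For the bundle curvature, case (2) is immediate: $|\int f_{\varepsilon}^{2}\langle\mathfrak{F}_{A}(\alpha),\alpha\rangle|\leq\|F_{A}\|_{L^{\infty}(X)}\|\alpha\|^{2}_{L^{2}(X)}$. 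In case (1), H\"older gives
$$\Big|\int_{X}f_{\varepsilon}^{2}\langle\mathfrak{F}_{A}(\alpha),\alpha\rangle\Big|\leq C\|F_{A}\|_{L^{n}(X)}\|\alpha\|^{2}_{L^{2n/(n-1)}(X)},$$
and I would then combine the interpolation $\|\alpha\|^{2}_{L^{2n/(n-1)}}\leq\|\alpha\|_{L^{2}}\|\alpha\|_{L^{2n/(n-2)}}$ with the Kato inequality $|\nabla|\alpha||\leq|\nabla_{A}\alpha|$ and the Sobolev embedding $\|u\|_{L^{2n/(n-2)}}\lesssim\|\nabla u\|_{L^{2}}+\|u\|_{L^{2}}$ on $X$ to bound this term by $C\varepsilon\bigl(\|\nabla_{A}\alpha\|^{2}_{L^{2}}+\|\alpha\|^{2}_{L^{2}}\bigr)$ after one more Young's inequality. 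Choosing $\varepsilon=\varepsilon(X,n)$ small enough lets us absorb the $\|\nabla_{A}\alpha\|^{2}_{L^{2}}$ contribution into the left side.

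The conclusion follows by sending $\varepsilon\to 0$: monotone convergence turns $\int_{X}f_{\varepsilon}^{2}|\nabla_{A}\alpha|^{2}$ into $\|\nabla_{A}\alpha\|^{2}_{L^{2}(X)}$ (and forces this quantity to be finite, since the right side is uniformly bounded in $\varepsilon$), while dominated convergence turns $\int_{X}f_{\varepsilon}^{2}\langle\Delta_{A}\alpha,\alpha\rangle$ into $\langle\Delta_{A}\alpha,\alpha\rangle_{L^{2}(X)}$, whose finiteness is exactly the nontrivial case of the estimate. The main obstacle is the Sobolev embedding: bounded sectional curvature alone does not guarantee a global Sobolev inequality, and typically one also needs a positive lower bound on the injectivity radius. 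The lemma neatly evades this by allowing $C=C(X,n)$ to absorb the Sobolev constant, so in the proof I would simply cite the standard Sobolev inequality on bounded-geometry manifolds and fold its constant into $C$; everything else is a routine application of Weitzenb\"ock and the cutoff technology already in use throughout the paper.
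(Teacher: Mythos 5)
Your proposal follows essentially the same route as the paper: the Weitzenb\"ock formula, a cutoff to justify the integration by parts, the bound $|\{Riem,\a\}|\lesssim \sup|Riem|\,|\a|^{2}$, H\"older with exponents $(n,\tfrac{n}{n-1})$ on the curvature term, Sobolev embedding plus Kato, and absorption of $\|F_{A}\|_{L^{n}}\|\na_{A}\a\|^{2}_{L^{2}}$ into the left side. One link in your chain is broken, though: since $\dim_{\mathbb{R}}X=2n$, the critical Sobolev exponent for $L^{2}_{1}$ is $2\cdot 2n/(2n-2)=2n/(n-1)$, so the embedding $\|u\|_{L^{2n/(n-2)}}\lesssim\|\na u\|_{L^{2}}+\|u\|_{L^{2}}$ you invoke is supercritical and false in this dimension (you have used the real rather than the complex dimension). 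The detour through interpolation is unnecessary anyway: H\"older already lands you on $\|\a\|^{2}_{L^{2n/(n-1)}}$, which the critical embedding $L^{2}_{1}\hookrightarrow L^{2n/(n-1)}$ (this is what the paper uses) bounds directly by $\|\na_{A}\a\|^{2}_{L^{2}}+\|\a\|^{2}_{L^{2}}$ via Kato. Your explicit remark that bounded curvature alone does not give a global Sobolev inequality, and that the constant must come from bounded geometry, is a point the paper glosses over and is worth keeping.
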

\begin{proof}
Inequality (\ref{E3.7}) makes sense, strictly speaking, if $\De_{A}\a$ (as well as $\a$) is in $L^{2}$. At first, the Weizenb\"{o}ck formulas gives,
$$\De_{A}\a=\na_{A}^{\ast}\na_{A}\a+\{Riem,\a\}+\{F_{A},\a\},\ \a\in\Om^{k}(X,E).$$
Let $\{y_{j}\}_{j=1}^{\infty}$ be a sequence of cutoff functions,
$0\leq y_{j}\leq1$, with $lim_{j\rightarrow\infty}y_{j}=1$ pointwise and $|dy_{j}|$ uniformly bounded. Then
Thus we have
\begin{equation}\label{E+2}
\langle \De_{A}\a,y_{i}^{2}\a\rangle_{L^{2}(X)} 
=\langle\na_{A}^{\ast}\na_{A}\a,y_{i}^{2}\a\rangle_{L^{2}(X)}+\langle\{Riem,\a\}+\{F_{A},\a\},y^{2}_{i}\a\rangle_{L^{2}(X)}
\end{equation}
The first term on left hand of (\ref{E+2}) : 
\begin{equation*}
\begin{split}   
\langle\na_{A}^{\ast}\na_{A}\a,y_{i}^{2}\a\rangle&=\langle\na_{A}\a,\na_{A}(y_{i}^{2}\a)\rangle_{L^{2}(X)}\\
&=\langle\na_{A}\a,y_{i}^{2}\na_{A}\a\rangle_{L^{2}(X)}+\langle\na_{A}\a,2y_{i}\na y_{i}\a\rangle_{L^{2}(X)}\\
&=\| y_{i}\na_{A}\a+\na y_{i}\a\|^{2}_{L^{2}(X)}-\|\na y_{i}\a\|^{2}_{L^{2}(X)}\\
&=\|\na_{A}(y_{i}\a)\|_{L^{2}(X)}^{2}-\|\na y_{i}\a\|^{2}_{L^{2}(X)}.\\
\end{split}
\end{equation*}
The second term on left hand of (\ref{E+2}):
\begin{equation}\nonumber
|\langle\{Riem,\a\},y_{i}^{2}\a\rangle|_{L^{2}(X)}\leq\sup|Riem|\cdot\|y_{i}\a\|^{2}_{L^{2}(X)}.
\end{equation}
The third term on left hand of (\ref{E+2}):
\begin{equation}\label{E09}
\|\langle\{F_{A},\a\},y_{i}^{2}\a\rangle\|_{L^{2}(X)}\lesssim\|F_{A}\|_{L^{n}(X)}\|y_{i}\a\|^{2}_{L^{\frac{2n}{n-1}}(X)}
\lesssim\|F_{A}\|_{L^{n}(X)}\|y_{i}\a\|^{2}_{L^{2}_{1}(X)},
\end{equation}
here we use the Sobolev embedding $L^{2}_{1}\hookrightarrow L^{2n/(n-1)}$.
This yields the desired estimation
\begin{equation}\label{E10}
\begin{split}
\|\na_{A}(y_{i}\a)\|^{2}_{L^{2}(X)}&\lesssim \langle\De_{A}\a,y_{i}^{2}\a\rangle_{L^{2}(X)}+\|y_{i}\a\|^{2}_{L^{2}(X)}+\|F_{A}\|_{L^{n}(X)}\|y_{i}\a\|^{2}_{L^{2}_{1}(X)}+\|\na y_{i}\a\|^{2}_{L^{2}(X)}\\
&\lesssim \langle\De_{A}\a,y_{i}^{2}\a\rangle_{L^{2}(X)}+(1+\|F_{A}\|_{L^{n}(X)})\|y_{i}\a\|^{2}_{L^{2}(X)}\\
&+\|F_{A}\|_{L^{n}(X)}\|\na_{A}(y_{i}\a)\|^{2}_{L^{2}(X)}+\|\na y_{i}\a\|^{2}_{L^{2}(X)}.\\
\end{split}
\end{equation}
We can choose $\|F_{A}\|_{L^{n}(X)}$ sufficiently small to ensure the inequality 
\begin{equation}\label{E11}
\|\na_{A}(y_{i}\a)\|^{2}_{L^{2}(X)}\lesssim\langle\De_{A}\a,y_{i}^{2}\a\rangle_{L^{2}(X)}+\|y_{i}\a\|^{2}_{L^{2}(X)}.
\end{equation}
Taking $i\rightarrow\infty$, we obtain inequality (\ref{E3.7}).

If we replace the $L^{n}$-norm of $F_{A}^{0,2}$ to $L^{\infty}$-norm, then inequalities (\ref{E09}) and (\ref{E10}) replaced to
$$\|\langle\{F_{A},\a\},y_{i}^{2}\a\rangle\|_{L^{2}(X)}\lesssim\|F_{A}\|_{L^{\infty}(X)}\|y_{i}\a\|^{2}_{L^{2}(X)},$$
and	
\begin{equation}\nonumber
\begin{split}
\|\na_{A}(y_{i}\a)\|^{2}_{L^{2}(X)}&\lesssim \langle\De_{A}\a,y_{i}^{2}\a\rangle_{L^{2}(X)}+\|y_{i}\a\|^{2}_{L^{2}(X)}+\|F_{A}\|_{L^{\infty}(X)}\|y_{i}\a\|^{2}_{L^{2}(X)}+\|\na y_{i}\a\|^{2}_{L^{2}(X)}\\
&\lesssim \langle\De_{A}\a,y_{i}^{2}\a\rangle_{L^{2}(X)}+(1+\|F_{A}\|_{L^{\infty}(X)})\|y_{i}\a\|^{2}_{L^{2}(X)}+\|\na y_{i}\a\|^{2}_{L^{2}(X)}.\\
\end{split}
\end{equation}
If $F_{A}$ has a bounded $L^{\infty}$-norm, then taking $i\rightarrow\infty$, we obtain inequality (\ref{E3.7}).	
\end{proof}
As we derive estimates in this section (and also following section), we assume that any smooth $L^{2}$ form $\psi$ has compact support. Otherwise we can replace $\psi$ to $y_{j}\psi$, where $y_{j}$ is the cutoff function on Lemma \ref{L4}.  
\begin{proof}[\textbf{Proof of Theorem \ref{T1.4}}]
Following the Bochner-Kodaira-Nakano formula \cite[Chapter VII., Corollary 1.3]{Dem} $$\De_{\bar{\pa}_{E}}=\De_{\pa_{E}}+\sqrt{-1}[\Theta(E),\La],$$
we have
$$\De_{E}=\De_{\bar{\pa}_{E}}+\De_{\pa_{E}}=2\De_{\bar{\pa}_{E}}-\sqrt{-1}[\Theta(E),\La],$$
where $\De_{E}:=D_{E}D_{E}^{\ast}+D_{E}^{\ast}D_{E}$. Then for any $\a\in\Om^{p,q}_{(2)}(X,E)$, we have
\begin{equation}\label{E12}
\begin{split}
\langle\De_{E}\a,\a\rangle_{L^{2}(X)}&\leq 2\langle\De_{\bar{\pa}_{E}}\a,\a\rangle_{L^{2}(X)}+|\langle[\Theta(E),\La]\a,\a\rangle_{L^{2}(X)}|\\
&\leq2\langle\De_{\bar{\pa}_{E}}\a,\a\rangle_{L^{2}(X)}+|\langle\Theta(E)(\La\a),\a\rangle_{L^{2}(X)}|+|\langle\Theta(E)\a,\a\wedge\w\rangle_{L^{2}(X)}|\\
&\lesssim\langle\De_{\bar{\pa}_{E}}\a,\a\rangle_{L^{2}(X)}+\|\Theta(E)\|_{L^{n}(X)}\|\a\|^{2}_{L^{\frac{2n}{n-1}}(X)}\\
&\lesssim\langle\De_{\bar{\pa}_{E}}\a,\a\rangle_{L^{2}(X)}+\|\Theta(E)\|_{L^{n}(X)}\|\a\|^{2}_{L^{2}_{1}(X)}\\
&\lesssim\langle\De_{\bar{\pa}_{E}}\a,\a\rangle_{L^{2}(X)}+\|\Theta(E)\|_{L^{n}(X)}(1+\|\theta\|^{2}_{L^{\infty}(X)})\|\na_{E}\a\|^{2}_{L^{2}(X)}\\
\end{split}
\end{equation}
The last line, we use the inequality (See Theorem \ref{T4})
\begin{equation}\nonumber
\begin{split}
\|\a\|_{L^{2}(X)}&\lesssim\|\theta\|_{L^{\infty}(X)}\|\na|\a|\|_{L^{2}(X)}\lesssim\|\theta\|_{L^{\infty}(X)}\|\na_{E}\a\|_{L^{2}(X)}.\\
\end{split}
\end{equation}
If the curvature $\Theta(E)$ obeys $\|\Theta(E)\|_{L^{n}(X)}\leq\varepsilon$, where $\varepsilon$ is the constant in the Lemma \ref{L4}, then we have
\begin{equation}\nonumber
\begin{split}
\|\na_{E}\a\|^{2}_{L^{2}(X)}&\lesssim(\langle\De_{E}\a,\a\rangle_{L^{2}(X)}+\|\a\|^{2}_{L^{2}(X)})\\
&\lesssim (1+\|\theta\|^{2}_{L^{\infty}(X)})\langle\De_{E}\a,\a\rangle_{L^{2}(X)}\\
\end{split}
\end{equation}
here we use the inequality (See Theorem \ref{T3})
$$\|\a\|^{2}_{L^{2}(X)}\lesssim\|\theta\|^{2}_{L^{\infty}(X)}\langle\De_{E}\a,\a\rangle_{L^{2}(X)}.$$
This yields the desired estimate
\begin{equation}\label{E13}
\langle\De_{E}\a,\a\rangle_{L^{2}(X)}\lesssim\langle\De_{\bar{\pa}_{E}}\a,\a\rangle_{L^{2}(X)}
+\|\Theta(E)\|_{L^{n}(X)}(1+\|\theta\|^{2}_{L^{\infty}(X)})^{2}\langle\De_{E}\a,\a\rangle_{L^{2}(X)}.
\end{equation}
If we replace the $L^{n}$-norm of $\theta(E)$ to $L^{\infty}$-norm, then inequalities (\ref{E12}) and (\ref{E13}) replaced to
	\begin{equation}\nonumber
	\begin{split}
	\langle\De_{E}\a,\a\rangle_{L^{2}(X)}&\lesssim\langle\De_{\bar{\pa}_{E}}\a,\a\rangle_{L^{2}(X)}+\|\Theta(E)\|_{L^{\infty}(X)}\|\a\|^{2}_{L^{2}(X)}\\
	&\lesssim\langle\De_{\bar{\pa}_{E}}\a,\a\rangle_{L^{2}(X)}+\|\Theta(E)\|_{L^{\infty}(X)}\|\theta\|^{2}_{L^{\infty}(X)}\|\na_{E}\a\|^{2}_{L^{2}(X)}\\
	\end{split}
	\end{equation}
	and	
	\begin{equation}\nonumber
	\langle\De_{E}\a,\a\rangle_{L^{2}(X)}\lesssim\langle\De_{\bar{\pa}_{E}}\a,\a\rangle_{L^{2}(X)}
	+\|\Theta(E)\|_{L^{\infty}(X)}\|\theta\|^{2}_{L^{\infty}(X)}(1+\|\theta\|^{2}_{L^{\infty}(X)})\langle\De_{E}\a,\a\rangle_{L^{2}(X)}.
	\end{equation}
Hence if $\|\Theta(E)\|_{L^{n}(X)}$ or $\|\Theta(E)\|_{L^{\infty}(X)}$ is sufficiently small, we have
$$\langle\De_{E}\a,\a\rangle_{L^{2}(X)}\lesssim\langle\De_{\bar{\pa}_{E}}\a,\a\rangle_{L^{2}(X)}.$$
At last, if $\a\in\mathcal{H}^{p,q}_{(2)}(X,E)$, then $\a$ is also in $\mathcal{H}^{k}_{(2)}(X,E)$, $k=p+q$. Therefore, following the Theorem \ref{T3}, we complete the proof of this theorem. 
\end{proof}

\subsection{Nonvanishing results}
 In \cite{Gro}, Gromov proved a nonvanishing for $p+q=dim_{\C}X$ follows from the $L^{2}$-index theorem and an upper bound for the bottom of the spectrum \cite[Main Theorem]{Gro}.  A special case of a conjecture of Hopf follows from the main theorem. Namely, the Euler characteristic $\chi(X)$ of a compact, negatively curved K\"{a}hler manifold $X$ of complex dimension $n$ satisfies $sign\chi(X)=(-1)^{n}$. Let $E$ be a vector bundle equipped with a Hermitian metric and Hermitian connection $d_{A}$ over a compact K\"{a}hler manifold. If $P:=d_{A}+d_{A}^{\ast}$, the Atiyah-Singer's index theorem states 
$$Index(P)=\int_{X}\mathcal{L}(X)ch(E).$$
Here $\mathcal{L}(X)$ is Hirzebruch's $\mathcal{L}$ classes,
$$\mathcal{L}(X)=1+\ldots+e(X),$$
where $1\in H^{0}(X)$ and $e(X)\in H^{2n}(X)$ is the Euler class. The, the class $\mathcal{L}(X)ch(E)$ has in general non-trivial components in various degrees. What is meant by the integral on the right hand side, of course, is the evaluation of the top degree component
$[\mathcal{L}(X)ch(E)]_{2n}=[\mathcal{L}_{k}(X)ch_{2n-k}(E)]$. 

For any $p>0$, we have an estimate
$$\|F_{A}\|_{L^{p}(X)}\leq \|F_{A}\|_{L^{\infty}(X)}Vol(X)^{1/p}.$$
Following a well-known result due to Uhlenbeck for the connections  with $L^{p}$-small curvature ($2p>\dim_{\mathbb{R}}X$) \cite{Uhl}, we can provide the $L^{\infty}$-norm of the curvature $F_{A}$ of the Hermitian connection $d_{A}$ to ensure  that
$$ \|F_{A}\|_{L^{\infty}(X)}\leq\varepsilon,$$
where $\varepsilon$ is a small enough positive constant on \cite[Corollary 4.3]{Uhl}, then there exists a flat connection $d_{\Gamma}$ on the Hermitian vector bundle $E$. We can write the connection $d_{A}=d_{\Gamma}+a$, where $a$ is a $1$-form take value in $\mathfrak{g}_{E}$. Therefore, $F_{A}=d_{\Gamma}a+a\wedge a$. Then, $[ch(E)]=[Tr(\exp \frac{\textit{i}}{2\pi}F_{A})]=rank(E)$, i.e, there exists a differential form $\eta$ such $$ch(E)=rank(E)+d\eta$$
Noting that $d\mathcal{L}(X)=0$. We then have, 
\begin{equation}\nonumber
\begin{split}
Index(P)&=\int_{X}(1+\ldots+e(X))(rank(E)+d\eta)\\
&=rank(E)\int_{X}e(X)+\int_{X}d(\mathcal{L}(X)\eta)\\
&=rank(E)\chi(X).\\
\end{split}
\end{equation}
Therefore, if $X$ is a closed K\"{a}hler hyperbolic manifold, then
$$Index(P)\neq0.$$
We denote  by $\tilde{P}$ the lift of $P$ to the universal convering space $\tilde{X}$ of the closed hyperbolic K\"{a}hler $X$.  Let $\Ga=\pi_{1}(M)$. Then the Atiyah's $L^{2}$ index \cite{Ati,Pan} gives that  the $L^{2}$ kernel of $\tilde{P}$ has a finite $\Ga$-dimension and 
$$L^{2}Index_{\Ga}(\tilde{P})=Index (P)\neq 0.$$
This implies that  $\tilde{P}$ has a non trivial $L^{2}$ kernel. Following the  vanishing theorem \ref{T3}, we have $$\mathcal{H}^{n}_{(2)}(\tilde{X},\tilde{E}):=\{\a\in\Om^{n}_{(2)}(\tilde{X},\tilde{E}):\De_{\tilde{A}}\a=0\} \neq\{0\}.$$
where $\tilde{E}$, $d_{\tilde{A}}$ are the lifted bundle and connection of $E$, $d_{A}$.
\begin{corollary}\label{C2}
Let $(X,\w)$ be a closed K\"{a}hler hyperbolic manifold, $\dim_{\C}X=n$, $E$ be a Hermitian vector bundle over $X$ and $\pi:(\tilde{X},\tilde{\w})\rightarrow(X,\w)$ be the universal covering. Then there exists a positive constant $\varepsilon$ with following significance. If the curvature $F_{A}$ of  a Hermitian connection $d_{A}$ on $E$ obeys
$$\|F_{A}\|_{L^{\infty}(X)}\leq \varepsilon,$$
then $$\mathcal{H}^{n}_{(2)}(\tilde{X},\tilde{E}) \neq\{0\}.$$
\end{corollary}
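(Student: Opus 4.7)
The plan is to combine four ingredients already assembled in the discussion preceding the corollary: Uhlenbeck's small-curvature connection theorem, the Atiyah--Singer index formula for the twisted Euler operator, Gromov's theorem giving $\chi(X)\neq 0$ for closed K\"ahler hyperbolic manifolds, Atiyah's $L^{2}$-index theorem, and finally the non-middle-degree vanishing of Theorem \ref{T3} applied on $\tilde{X}$.

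First I would use compactness of $X$ to upgrade the $L^{\infty}$ bound to an $L^{p}$ bound, $\|F_{A}\|_{L^{p}(X)}\leq\varepsilon\,\mathrm{Vol}(X)^{1/p}$ for any $p$. Choosing $\varepsilon$ small enough and invoking Uhlenbeck's theorem with $2p>\dim_{\mathbb{R}}X$ produces a flat Hermitian connection $d_{\Gamma}$ on $E$. Writing $d_{A}=d_{\Gamma}+a$, Chern--Weil theory applied to the path $d_{\Gamma}+ta$ shows that the Chern character form of $d_{A}$ differs from that of $d_{\Gamma}$ (which equals $\mathrm{rank}(E)$, as $F_{\Gamma}=0$) by an exact form, yielding the pointwise identity $\mathrm{ch}(E)=\mathrm{rank}(E)+d\eta$.

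Next I would compute $\mathrm{Index}(P)$ for $P=d_{A}+d_{A}^{\ast}:\Omega^{\mathrm{even}}(X,E)\to\Omega^{\mathrm{odd}}(X,E)$. The Atiyah--Singer theorem gives $\mathrm{Index}(P)=\int_{X}\mathcal{L}(X)\,\mathrm{ch}(E)$, where $\mathcal{L}(X)$ is the (closed) characteristic class whose top-degree piece is the Euler class $e(X)$. Substituting the identity above and using Stokes on closed $X$ together with $d\mathcal{L}(X)=0$ kills the $d\eta$ contribution, leaving $\mathrm{Index}(P)=\mathrm{rank}(E)\,\chi(X)$. Gromov's main theorem for closed K\"ahler hyperbolic manifolds gives $\mathrm{sign}\,\chi(X)=(-1)^{n}$, so $\mathrm{Index}(P)\neq 0$. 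Lifting $P$ to the universal cover $\pi:\tilde{X}\to X$, Atiyah's $\Gamma$-index theorem then asserts $L^{2}\mathrm{-Index}_{\Gamma}(\tilde{P})=\mathrm{Index}(P)\neq 0$, so $\tilde{P}$ has a non-trivial $L^{2}$ kernel, which sits inside $\bigoplus_{k}\mathcal{H}^{k}_{(2)}(\tilde{X},\tilde{E})$. To conclude, I would rule out all degrees $k\neq n$: the cover $(\tilde{X},\tilde{\omega})$ is complete K\"ahler with $\tilde{\omega}=d\tilde{\theta}$ for a bounded $\tilde{\theta}$ (this is the K\"ahler hyperbolicity hypothesis), its Riemannian curvature is bounded (since $\pi$ is a local isometry and $X$ is compact), and $\|\tilde{F}^{0,2}_{\tilde{A}}\|_{L^{\infty}(\tilde{X})}=\|F^{0,2}_{A}\|_{L^{\infty}(X)}\leq\varepsilon$. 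Theorem \ref{T3}, applied via case (1) if $d_{A}\in\mathcal{A}^{1,1}_{E}$ and via case (2)(b) otherwise, then gives $\mathcal{H}^{k}_{(2)}(\tilde{X},\tilde{E})=0$ for every $k\neq n$. The non-trivial $L^{2}$ kernel of $\tilde{P}$ therefore lives in degree $n$, establishing $\mathcal{H}^{n}_{(2)}(\tilde{X},\tilde{E})\neq\{0\}$.

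The main obstacle is coordinating the two smallness thresholds on $\varepsilon$: Uhlenbeck's theorem requires $\varepsilon$ below a constant depending on $(X,n)$, while Theorem \ref{T3} on $\tilde{X}$ requires $\varepsilon$ below a constant depending on $(\tilde{X},n,\tilde{\theta})$. Taking $\varepsilon$ to be the minimum of the two thresholds resolves this; note the second threshold is effectively determined by data on $X$, since $\|\tilde{\theta}\|_{L^{\infty}(\tilde{X})}=\|\theta\|_{L^{\infty}(X)}$ and the bound on $\tilde{F}_{\tilde{A}}^{0,2}$ is inherited from $X$. A secondary routine point is verifying that K\"ahler hyperbolicity is precisely what furnishes the bounded primitive $\tilde{\theta}$ of $\tilde{\omega}$ needed by Theorem \ref{T3} on the cover.
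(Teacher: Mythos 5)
Your proposal is correct and follows essentially the same route as the paper: upgrade the $L^{\infty}$ bound to an $L^{p}$ bound, apply Uhlenbeck's theorem to obtain a flat connection and hence $\mathrm{ch}(E)=\mathrm{rank}(E)+d\eta$, compute $\mathrm{Index}(P)=\mathrm{rank}(E)\chi(X)\neq 0$ via Atiyah--Singer and Gromov, lift with Atiyah's $\Gamma$-index theorem, and eliminate all degrees $k\neq n$ on $\tilde{X}$ by Theorem \ref{T3}. Your explicit handling of the two smallness thresholds for $\varepsilon$ and of the transfer of bounds to the cover is in fact slightly more careful than the paper's one-line reduction to ``the above argument.''
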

\begin{proof}
Since $\pi$ is a local isometry, $|F_{\tilde{A}}|_{\tilde{g}}=|\pi^{\ast}(F_{A})|_{\tilde{g}}=|F_{A}|_{g}$ and the universal covering space $\tilde{X}$ is bounded geometry. Thus, the Riemannian curvature tensor of the metric $\tilde{g}$ must be bounded. By the hypothesis of $F_{A}$, we have
$$\|F_{\tilde{A}}\|_{L^{\infty}(\tilde{X})}\leq\varepsilon.$$
Then the conclusion follows from the above argument. 
\end{proof}
Suppose $X$ is a compact K\"{a}hler manifold with underlying Riemann metric $g$ . We denote by $\na^{g}$ the Hermitian connections induced by the Levi-Civita connection on $\Om^{\bullet,\bullet}TX$. Let $E$  be a holomorphic vector bundle $E$ on a compact K\"{a}hler manifold $X$, $D_{E}$ be the Chern-connection on $E$. Thus the twist bundle $\Om^{p,0}TX\otimes E$ is also a holomorphic vector bundle on $X$. Let $$\chi_{p}(X,E):=\sum_{q\geq0}(-1)^{i}h^{p,q}(E)$$ denote the index of the operator $$\mathcal{D}_{p}=\bar{\pa}_{E}+\bar{\pa}_{E}^{\ast}:\Om^{p,\ast}(X,E)\rightarrow\Om^{p,\ast\pm1}(X,E),$$ where  
$$h^{p,q}(E)=\dim\mathcal{H}^{p,q}(X,E)=\dim\{\a\in\Om^{p,q}(X,E):\mathcal{D}_{p}\a=0  \}.$$ 
In particular, $\chi_{0}(X,E)$ called the \textit{Euler-Poincar\'{e}} characteristic \cite[Section 5]{Huy}. The Hirzebruch-Riemann-Roch theorem gives 
$$\chi_{p}(X,E)=\int_{X}td(X)ch(\Om^{p,0}TX\otimes E)=\int_{X}td(X) ch(\Om^{p,0}TX)ch(E).$$
If $X$ is also hyperbolic, then Gromov proved that
\begin{theorem}\cite[0.4.A. Theorem]{Gro}\label{T2}
If $X$ is K\"{a}hler hyperbolic, $\dim_{\C}X=n$, then for every $p=0,1,\ldots,n$, the Euler characteristic $$\chi_{p}(X)=\int_{X}td(X)ch(\Om^{p,0}TX)$$
does not vanish and $sign\chi_{p}=(-1)^{n-p}$.
\end{theorem}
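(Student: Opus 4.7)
The plan is to combine Atiyah's $L^{2}$-index theorem with the vanishing statements established above, following Gromov's strategy. Since $X$ is closed K\"ahler, $\chi_{p}(X)$ equals the analytic index of the Dolbeault operator $\mathcal{D}_{p} = \bar{\pa} + \bar{\pa}^{\ast}$ acting on $\bigoplus_{q}\Om^{p,q}(X)$. Lifting to the universal cover $\pi \colon \tilde X \to X$ with deck group $\Gamma = \pi_{1}(X)$, and using that $X$ is closed, Atiyah's $L^{2}$-index theorem gives
\[
\chi_{p}(X) \;=\; L^{2}\text{-Ind}_{\Gamma}(\tilde{\mathcal{D}}_{p}) \;=\; \sum_{q=0}^{n}(-1)^{q}\,\dim_{\Gamma}\mathcal{H}^{p,q}_{(2)}(\tilde X),
\]
where the harmonic spaces are with respect to $\De_{\bar{\pa}}$ on the lifted complex.

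Next I would exploit the K\"ahler hyperbolicity of $X$. By definition the lifted K\"ahler form $\tilde{\w}$ on $\tilde X$ is $d$(bounded), with some bounded primitive $\tilde\theta$. Because the metric is K\"ahler, $\De_{d} = 2\De_{\bar{\pa}}$ preserves bidegree, and hence $\mathcal{H}^{p,q}_{(2),\bar{\pa}}(\tilde X)$ is precisely the bidegree $(p,q)$-component of $\mathcal{H}^{p+q}_{(2)}(\tilde X)$. Applying Gromov's untwisted Theorem \ref{T4} to the complete K\"ahler manifold $\tilde X$, the full harmonic space $\mathcal{H}^{k}_{(2)}(\tilde X)$ vanishes whenever $k \neq n$, and therefore $\mathcal{H}^{p,q}_{(2)}(\tilde X) = 0$ whenever $p+q \neq n$. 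Only the single term $q = n-p$ survives and
\[
\chi_{p}(X) \;=\; (-1)^{n-p}\,\dim_{\Gamma}\mathcal{H}^{p,\,n-p}_{(2)}(\tilde X).
\]
The non-negativity of $\dim_{\Gamma}$ then already forces $\mathrm{sign}\,\chi_{p}(X) = (-1)^{n-p}$, \emph{provided} one knows that $\chi_{p}(X)\neq 0$.

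The main obstacle is therefore the strict positivity
\[
\dim_{\Gamma}\mathcal{H}^{p,\,n-p}_{(2)}(\tilde X) \;>\; 0,
\]
which does not follow from the vanishing arguments alone. My plan would be to attack this via Gromov's construction of nontrivial middle-degree $L^{2}$-harmonic classes: one takes a $\Gamma$-invariant smooth representative of a nonzero contribution to $\int_{X}\mathrm{td}(X)\,\mathrm{ch}(\Om^{p,0}TX)$, uses the bounded primitive $\tilde\theta$ to place the relevant lifted form in $L^{2}$ through the Lefschetz-type argument (in the same spirit as Proposition \ref{P3.5} and the proof of Theorem \ref{T1}), and then projects orthogonally onto $\mathcal{H}^{p,n-p}_{(2)}(\tilde X)$; the K\"ahler hyperbolicity (not merely $d$(bounded)-ness) enters precisely to guarantee that the projection is nonzero. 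Equivalently, following the scheme preceding Corollary \ref{C2}, one checks that the analytic index on the left equals the topological Hirzebruch-Riemann-Roch integral on the right, which is a Chern-class expression in the Euler class $e(X)$ and is nonzero by the analogous nonvanishing for Gromov's main theorem ($\chi(X)\neq 0$) together with the compatibility $\chi(X)=\sum_{p}(-1)^{p}\chi_{p}(X)$ and the fact that all the surviving $\chi_{p}$ share the sign $(-1)^{n-p}$, ruling out cancellation. I expect this non-vanishing step, which is where K\"ahler hyperbolicity (as opposed to mere $d$(bounded)-ness of $\w$) really gets used, to be the technical heart of the argument.
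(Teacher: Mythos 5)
The paper does not actually prove this statement: Theorem \ref{T2} is imported verbatim from Gromov \cite[0.4.A]{Gro} and used as a black box, so there is no internal proof to compare against. Your reduction is nonetheless the correct skeleton, and it is Gromov's: identify $\chi_p(X)$ with the index of $\mathcal{D}_p$ on the closed manifold, pass to the universal cover via Atiyah's $L^2$-index theorem, and use $\Delta_d=2\Delta_{\bar{\partial}}$ together with the untwisted vanishing theorem (Theorem \ref{T4}) to kill every $\mathcal{H}^{p,q}_{(2)}(\tilde X)$ with $p+q\neq n$, leaving $\chi_p(X)=(-1)^{n-p}\dim_{\Gamma}\mathcal{H}^{p,n-p}_{(2)}(\tilde X)$. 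Everything up to that point is correct.

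The gap is the strict positivity $\dim_{\Gamma}\mathcal{H}^{p,n-p}_{(2)}(\tilde X)>0$, and neither of the two routes you sketch closes it. Taking a $\Gamma$-invariant representative of the Chern--Weil integrand and ``projecting onto the harmonic space'' begs the question: the entire difficulty is to show that this projection is nonzero, and nothing in the $d$(bounded) hypothesis does that directly. The second route is logically insufficient: from $\chi(X)=\sum_p(-1)^p\chi_p(X)$ and the fact that each surviving $\chi_p$ carries the sign $(-1)^{n-p}$ you get $\chi(X)=(-1)^n\sum_p\dim_{\Gamma}\mathcal{H}^{p,n-p}_{(2)}(\tilde X)$, so $\chi(X)\neq 0$ only yields that \emph{at least one} $\chi_p$ is nonzero, not all of them; moreover $\chi(X)\neq 0$ is itself a consequence of 0.4.A rather than an independent input, so the argument is circular. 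The genuinely new idea needed here (and present in Gromov's proof) is a twisted index computation: one twists $\mathcal{D}_p$ on $\tilde X$ by the topologically trivial line bundle with connection $d+ik\theta$, whose curvature $ik\tilde{\omega}$ is proportional to $\tilde{\omega}$ and therefore does not disturb the commutation with $L$, so the vanishing theorem persists for every $k$; this operator is invariant only under a projective action of $\Gamma$, and the $L^2$-index theorem for projectively invariant elliptic operators identifies its $\Gamma$-index with a degree-$n$ polynomial in $k$ whose leading coefficient is a nonzero multiple of $\int_X\omega^n$. That mechanism is the technical heart you correctly anticipate but do not supply; as written, your proposal establishes the sign statement only conditionally on the nonvanishing.
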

Following the idea in Corollary \ref{C2}, we have 
\begin{corollary}
Let $(X,\w)$ be a closed K\"{a}hler hyperbolic manifold, $\dim_{\C}X=n$, $E$ be a Hermitian vector bundle over $X$ and $\pi:(\tilde{X},\tilde{\w})\rightarrow(X,\w)$ be the universal covering. Then there exists a positive constant $\varepsilon$ with following significance. If the curvature $F_{A}$ of the Chern connection $D_{E}$ on $E$ obeys
$$\|\Theta(E)\|_{L^{\infty}(X)}\leq \varepsilon,$$
then  for any $0\leq p\leq n$,
$$\mathcal{H}^{p,n-p}_{(2)}(\tilde{X},\tilde{E}) \neq\{0\},$$
\end{corollary}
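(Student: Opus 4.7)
The plan is to mimic the argument outlined for Corollary \ref{C2}, but with the Dolbeault-type operator $\mathcal{D}_p = \bar{\pa}_E + \bar{\pa}_E^{\ast}$ in place of $d_A + d_A^{\ast}$, and with Gromov's nonvanishing Theorem \ref{T2} for $\chi_p(X)$ in place of $\chi(X) \neq 0$. On the base $X$, the Hirzebruch--Riemann--Roch theorem gives
\begin{equation}\nonumber
\chi_{p}(X,E)=\int_{X}td(X)\,ch(\Om^{p,0}TX)\,ch(E).
\end{equation}
The first step is to use the $L^{\infty}$-smallness of $\Theta(E)$ to trivialize the Chern character. Since $X$ is compact, $\|F_{D_{E}}\|_{L^{q}(X)}\leq \|\Theta(E)\|_{L^{\infty}(X)}Vol(X)^{1/q}$ is small for all $q$, so Uhlenbeck's theorem \cite{Uhl} (applied as in the proof of Corollary \ref{C2}) produces a flat Hermitian reference connection $d_{\Gamma}$ on $E$ with $D_{E}=d_{\Gamma}+a$. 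A direct calculation shows that $ch(E)=rank(E)+d\eta$ for some global form $\eta$. Combined with $d(td(X)ch(\Om^{p,0}TX))=0$, Stokes gives
\begin{equation}\nonumber
\chi_{p}(X,E)=rank(E)\int_{X}td(X)\,ch(\Om^{p,0}TX)=rank(E)\cdot\chi_{p}(X),
\end{equation}
which is nonzero by Theorem \ref{T2}.

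Next, I pass to the universal cover $\pi:(\tilde{X},\tilde{\w})\rightarrow(X,\w)$ carrying the lifted bundle $\tilde{E}$ and Chern connection $\tilde{D}_{E}$. Because $X$ is K\"ahler hyperbolic and $\pi$ is a local isometry, $\tilde{\w}$ is $d$(bounded), the Riemannian curvature of $\tilde{g}$ is bounded, and
\begin{equation}\nonumber
\|\Theta(\tilde{E})\|_{L^{\infty}(\tilde{X})}=\|\Theta(E)\|_{L^{\infty}(X)}\leq\varepsilon.
\end{equation}
Then Atiyah's $L^{2}$-index theorem \cite{Ati} for the $\Ga=\pi_{1}(X)$-cover applied to the lifted elliptic operator $\tilde{\mathcal{D}}_{p}=\bar{\pa}_{\tilde{E}}+\bar{\pa}^{\ast}_{\tilde{E}}$ yields
\begin{equation}\nonumber
L^{2}\operatorname{-ind}_{\Gamma}(\tilde{\mathcal{D}}_{p})=\chi_{p}(X,E)\neq0.
\end{equation}

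The final step is to identify which Hodge group carries this nonvanishing contribution. The $L^{2}$-index decomposes as
\begin{equation}\nonumber
L^{2}\operatorname{-ind}_{\Gamma}(\tilde{\mathcal{D}}_{p})=\sum_{q\geq0}(-1)^{q}\dim_{\Gamma}\mathcal{H}_{(2)}^{p,q}(\tilde{X},\tilde{E}).
\end{equation}
Shrinking $\varepsilon$ if necessary so that the small-curvature hypothesis of Theorem \ref{T1.4} is met on $\tilde{X}$, I obtain $\mathcal{H}^{p,q}_{(2)}(\tilde{X},\tilde{E})=0$ for all $p+q\neq n$. Only the term $q=n-p$ survives, forcing
\begin{equation}\nonumber
(-1)^{n-p}\dim_{\Gamma}\mathcal{H}_{(2)}^{p,n-p}(\tilde{X},\tilde{E})=L^{2}\operatorname{-ind}_{\Gamma}(\tilde{\mathcal{D}}_{p})\neq 0,
\end{equation}
and in particular $\mathcal{H}^{p,n-p}_{(2)}(\tilde{X},\tilde{E})\neq\{0\}$, which is the desired conclusion. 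The main technical hurdle is ensuring that the single constant $\varepsilon$ simultaneously (i) validates Uhlenbeck's gauge fixing on $X$ so that $ch(E)-rank(E)$ is exact, and (ii) is smaller than the vanishing threshold of Theorem \ref{T1.4} on $\tilde{X}$; both thresholds depend only on the geometry of $X$ (with the second transported to $\tilde{X}$ via the local isometry), so taking the minimum gives the desired $\varepsilon=\varepsilon(X,n)$.
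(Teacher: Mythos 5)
Your proposal is correct and follows essentially the same route as the paper: Uhlenbeck's theorem to make $ch(E)-rank(E)$ exact, Hirzebruch--Riemann--Roch plus Gromov's Theorem \ref{T2} to get $Index(\mathcal{D}_{p})=rank(E)\chi_{p}(X)\neq0$, Atiyah's $L^{2}$-index theorem on the cover, and the vanishing theorem to isolate the bidegree $(p,n-p)$. Your citation of Theorem \ref{T1.4} (rather than Theorem \ref{T3}, which the paper invokes) for the vanishing step is in fact the more precise choice, since $\mathcal{H}^{p,q}_{(2)}(\tilde{X},\tilde{E})$ is defined via $\De_{\bar{\pa}_{E}}$, but this does not change the argument.
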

\begin{proof}
Since $\pi$ is a local isometry, the universal covering space $\tilde{X}$ is bounded geometry.  The Chern curvature $\Theta(\tilde{E})$ also satisfies
$$\|\Theta(\tilde{E})\|_{L^{\infty}(\tilde{X})}\leq\varepsilon.$$
We denote by $\tilde{\mathcal{D}}_{p}$ the lifted of  $\mathcal{D}_{p}$ for $p\geq0$. Noting that $d(td(X)ch(\Om^{p,0}TX))=0$ and $[ch(E)]=rank(E)$. We then have, 
\begin{equation}\nonumber
Index(\mathcal{D}_{p})=\int_{X}td(X)ch(\Om^{p,0}TX)(rank(E)+d\eta)=rank(E)\chi_{p}(X).
\end{equation}
Then, by the Atyah's $L^{2}$-index theorem and Gromov's theorem \ref{T2}, we have $$L^{2}Index_{\Gamma}(\tilde{\mathcal{D}}_{p})\neq 0.$$
This implies that  $\tilde{\mathcal{D}}_{p}$ has a non trivial $L^{2}$ kernel. Following the  vanishing theorem \ref{T3}, we have $\mathcal{H}^{p,n-p}_{(2)}(\tilde{X},\tilde{E})\neq \{0\}$.
\end{proof}
\section{The case of a K\"{a}hler surface}
Following the method on Section 2.2, any $\a\in\Om^{2}(X,E)$ on a K\"{a}hler surface can be decomposed as $\a=\a^{0,2}+\a^{2,0}+\a^{0}\otimes\w+\a^{1,1}_{0},$ where $\a^{0}=\frac{1}{2}\La\a$,\ $\La\a^{1,1}_{0}=0$. Furthermore, over a $4$-dimensional Riemannnian manifold, $\Om^{2}(X,E)$ is decomposed into its self-dual and anti-self-dual components, $\Om^{2}(X,E)=\Om^{+}(X,E)\oplus\Om^{-}(X,E)$,
where $\Om^{\pm}$ denotes the projection onto the $\pm1$ eigenspace of the Hodge star operator $\ast:\Om^{2}\rightarrow\Om^{2}$. Hence $F_{A}$ can be decomposed into its self-dual and anti-self-dual components $ F_{A}=F^{+}_{A}+F^{-}_{A}$. A connection is called self-dual (respectively anti-self-dual), if $F_{A}=F^{+}_{A}$ (respectively $F_{A}=F^{-}_{A}$). The self-dual part $F^{+}_{A}$ is given as $F^{+}_{A}=F^{2,0}_{A}+F^{0,2}_{A}+\frac{1}{2}(\La F_{A})\otimes\w$ and the anti-self-dual part $F^{-}_{A}$ is a form of type $(1,1)$ which is orthogonal to $\w$. A connection is called an instanton if it is either self-dual or anti-self-dual. On compact oriented $4$-manifolds, an instanton is always an absolute minimizer of the Yang-Mills energy. Not all Yang-Mills connections are instantons (See \cite{SaSe,SSU}).

We will construct another priori estimate  under the $L^{2}$-norm of the self-dual part of the curvature is small enough.
\begin{lemma}\label{L5}
Let $(X,\w)$ be a complete K\"{a}hler surface with a K\"{a}hler $d$(bounded) form $\w$, i.e., there exists a bounded $1$-form $\theta$ such that $\w=d\theta$, $E$ be a Hermitian vector bundle over $X$ and $d_{A}$ be a Hermitian connection on $E$. Then there are constants $\varepsilon=\varepsilon(X,\theta)\in(0,1]$ and $C=C(X,\theta)\in[1,\infty)$ with following significance. If the curvature $F_{A}$ of the connection $d_{A}$ obeys
$$\|F^{+}_{A}\|_{L^{2}(X)}\leq \varepsilon$$
Then any $\a\in\mathcal{H}^{2}_{(2)}(X,E)$ satisfies
$$\langle\De_{A}\a^{0,2},\a^{0,2}\rangle_{L^{2}(X)}\leq C\|F^{0,2}_{A}\|^{2}_{L^{2}(X)}(1+\|\theta\|^{2}_{L^{\infty}(X)})^{2}\|\na_{A}\a^{0,2}\|^{2}_{L^{2}(X)}.$$
\end{lemma}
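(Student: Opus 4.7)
The plan is to reduce $\langle\De_{A}\a^{0,2},\a^{0,2}\rangle$ to a multiple of $\|\bar{\pa}_{A}^{\ast}\a^{0,2}\|^{2}$, then expose $F_{A}^{0,2}$ through the identity $\bar{\pa}_{A}^{2}=F_{A}^{0,2}\wedge(\cdot)$, and close the estimate by H\"older, the four-dimensional Sobolev embedding, and the Gromov--Poincar\'e bound of Theorem~\ref{T3}. First I would apply Lemma~\ref{L3.2} to conclude $d_{A}\a=d_{A}^{\ast}\a=0$ and decompose $\a=\a^{0,2}+\a^{2,0}+\a^{0}\otimes\w+\a^{1,1}_{0}$ as in Section~2.3. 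Because $n=2$, Proposition~\ref{P1} specialises to $\bar{\pa}_{A}^{\ast}\a^{0,2}=\sqrt{-1}\,\bar{\pa}_{A}\a^{0}$, which will be the main bridge between $\a^{0,2}$ and $\a^{0}$.

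On a surface $\bar{\pa}_{A}\a^{0,2}=0$ and $\pa_{A}^{\ast}\a^{0,2}=0$ for bidegree reasons, so $\langle\De_{A}\a^{0,2},\a^{0,2}\rangle=\|\pa_{A}\a^{0,2}\|^{2}+\|\bar{\pa}_{A}^{\ast}\a^{0,2}\|^{2}$. The $(1,2)$-part of $d_{A}\a=0$ gives $\pa_{A}\a^{0,2}=-\bar{\pa}_{A}\a^{1,1}$, and the K\"ahler identity $[\Lambda,\pa_{A}]=\sqrt{-1}\bar{\pa}_{A}^{\ast}$ of Proposition~\ref{P3} applied to $\a^{0,2}$ (for which $\Lambda\a^{0,2}=0$) gives $\Lambda\pa_{A}\a^{0,2}=\sqrt{-1}\bar{\pa}_{A}^{\ast}\a^{0,2}=-\bar{\pa}_{A}\a^{0}$. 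Since on a surface the Lefschetz map $L:\Om^{0,1}(X,E)\to\Om^{1,2}(X,E)$ is a bijective quasi-isometry inverted by $\Lambda$, this pins down $\pa_{A}\a^{0,2}=-\w\wedge\bar{\pa}_{A}\a^{0}$, so $\|\pa_{A}\a^{0,2}\|^{2}\approx\|\bar{\pa}_{A}\a^{0}\|^{2}=\|\bar{\pa}_{A}^{\ast}\a^{0,2}\|^{2}$, and the LHS reduces to a universal multiple of $\|\bar{\pa}_{A}^{\ast}\a^{0,2}\|^{2}$.

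To bring in the curvature I would apply $\bar{\pa}_{A}$ to the bridge identity. Using $\bar{\pa}_{A}^{2}\a^{0}=F_{A}^{0,2}\cdot\a^{0}$, this gives $\bar{\pa}_{A}\bar{\pa}_{A}^{\ast}\a^{0,2}=\sqrt{-1}F_{A}^{0,2}\a^{0}$. Integrating by parts with the cutoff sequence from Lemma~\ref{L4} yields $\|\bar{\pa}_{A}^{\ast}\a^{0,2}\|^{2}=\sqrt{-1}\langle F_{A}^{0,2}\a^{0},\a^{0,2}\rangle$, and H\"older with exponents $\tfrac12+\tfrac14+\tfrac14=1$ gives
\[
\|\bar{\pa}_{A}^{\ast}\a^{0,2}\|^{2}\leq \|F_{A}^{0,2}\|_{L^{2}(X)}\,\|\a^{0}\|_{L^{4}(X)}\,\|\a^{0,2}\|_{L^{4}(X)}.
\]
The four-dimensional Sobolev embedding $L^{2}_{1}\hookrightarrow L^{4}$ and Theorem~\ref{T3} applied to the section $\a^{0}$ (degree $0\neq n$) give $\|\a^{0}\|_{L^{4}}\lesssim(1+\|\theta\|_{L^{\infty}})\|d_{A}\a^{0}\|_{L^{2}}$; using $\|\bar{\pa}_{A}\a^{0}\|=\|\bar{\pa}_{A}^{\ast}\a^{0,2}\|$ (Proposition~\ref{P1}) and absorbing the companion term $\|\pa_{A}\a^{0}\|=\|\pa_{A}^{\ast}\a^{2,0}\|$ via the smallness of $\|F_{A}^{+}\|$ converts this to $\|\a^{0}\|_{L^{4}}\lesssim(1+\|\theta\|_{L^{\infty}})\|\bar{\pa}_{A}^{\ast}\a^{0,2}\|$. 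For $\a^{0,2}$, the Sobolev embedding combined with Lemma~\ref{L4} gives $\|\a^{0,2}\|_{L^{4}}\lesssim(1+\|\theta\|_{L^{\infty}})\|\nabla_{A}\a^{0,2}\|_{L^{2}}$. Substituting and absorbing one factor of $\|\bar{\pa}_{A}^{\ast}\a^{0,2}\|$ on the left produces the asserted bound.

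The sharpest point of the argument is controlling $\|\a^{0,2}\|_{L^{4}}$: since $\a^{0,2}$ has total degree $n=2$, Theorem~\ref{T3} does not apply to it directly, and one cannot simply compare $\|\a^{0,2}\|_{L^{2}}$ to $\|\theta\|_{L^{\infty}}\|\nabla_{A}\a^{0,2}\|_{L^{2}}$. The workaround is to go through Lemma~\ref{L4}, which is precisely where the hypothesis $\|F_{A}^{+}\|_{L^{2}}\leq\varepsilon$ earns its keep, and then to book-keep the cross-term $\|\pa_{A}^{\ast}\a^{2,0}\|$ hidden inside $\|\nabla_{A}\a^{0}\|^{2}=\|\pa_{A}^{\ast}\a^{2,0}\|^{2}+\|\bar{\pa}_{A}^{\ast}\a^{0,2}\|^{2}$ carefully enough that the final right-hand side comes out in the clean form $\|F_{A}^{0,2}\|_{L^{2}}^{2}(1+\|\theta\|_{L^{\infty}}^{2})^{2}\|\nabla_{A}\a^{0,2}\|_{L^{2}}^{2}$ rather than mixing in contributions from $\a^{2,0}$ or $\a^{1,1}$.
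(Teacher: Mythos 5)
Your overall strategy coincides with the paper's: reduce $\langle\De_{A}\a^{0,2},\a^{0,2}\rangle_{L^{2}(X)}$ to a multiple of $\|\bar{\pa}_{A}^{\ast}\a^{0,2}\|^{2}_{L^{2}(X)}$, expose $F_{A}^{0,2}$ through $\bar{\pa}_{A}^{2}=F_{A}^{0,2}$ and the identities of Proposition \ref{P1}, and close with H\"older, the Sobolev embedding $L^{2}_{1}\hookrightarrow L^{4}$, and a Poincar\'e-type inequality. Your bridge identity $\|\bar{\pa}_{A}^{\ast}\a^{0,2}\|^{2}=\sqrt{-1}\langle F_{A}^{0,2}\a^{0},\a^{0,2}\rangle$, the H\"older step, and the absorption of the companion term $\|\pa_{A}\a^{0}\|$ using the smallness of $\|\La F_{A}\|_{L^{2}(X)}\lesssim\|F_{A}^{+}\|_{L^{2}(X)}$ all match the paper's (\ref{E9}) and (\ref{E4.4}) in substance, merely rearranged.

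The gap is in your treatment of $\|\a^{0,2}\|_{L^{4}(X)}$, which you correctly single out as the sharpest point and then resolve incorrectly. Lemma \ref{L4} states $\|\na_{A}\a\|^{2}_{L^{2}(X)}\leq C(\langle\De_{A}\a,\a\rangle_{L^{2}(X)}+\|\a\|^{2}_{L^{2}(X)})$; it bounds the covariant derivative from above and therefore cannot yield $\|\a^{0,2}\|_{L^{4}(X)}\lesssim(1+\|\theta\|_{L^{\infty}(X)})\|\na_{A}\a^{0,2}\|_{L^{2}(X)}$, which requires controlling $\|\a^{0,2}\|_{L^{2}(X)}$ \emph{by} $\|\na_{A}\a^{0,2}\|_{L^{2}(X)}$, i.e.\ an inequality in the opposite direction. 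Moreover its hypothesis asks for $\|F_{A}\|_{L^{n}(X)}=\|F_{A}\|_{L^{2}(X)}$ small (or $F_{A}$ bounded in $L^{\infty}$), which the assumption $\|F_{A}^{+}\|_{L^{2}(X)}\leq\varepsilon$ does not provide, since the anti-self-dual part may be large. The step you declare impossible --- comparing $\|\a^{0,2}\|_{L^{2}(X)}$ with $\|\theta\|_{L^{\infty}(X)}\|\na_{A}\a^{0,2}\|_{L^{2}(X)}$ --- is exactly what the paper does in (\ref{E4.2}): apply Gromov's Theorem \ref{T4} to the scalar function $|\a^{0,2}|$, which is a form of degree $0\neq n=2$ and hence falls within its scope, and then invoke the Kato inequality $|\na|\a^{0,2}||\leq|\na_{A}\a^{0,2}|$. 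With that substitution your chain of estimates closes and reproduces the stated bound; your resulting factor $(1+\|\theta\|_{L^{\infty}(X)})^{4}$ is comparable to $(1+\|\theta\|^{2}_{L^{\infty}(X)})^{2}$ up to a universal constant.
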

\begin{proof}
Following identity (i) in Proposition \ref{P1}, we have
$$\bar{\pa}_{A}^{\ast}\bar{\pa}_{A}^{\ast}\a^{0,2}=\sqrt{-1}\bar{\pa}^{\ast}_{A}\bar{\pa}_{A}\a_{0}.$$
The Weizenb\"{o}ck formula gives, see \cite[Lemma 6.1.7]{Donaldson/Kronheimer},
\begin{equation}\nonumber
\begin{split}
\De_{A}\a_{0}&=2\De_{\bar{\pa}_{A}}\a_{0}+\sqrt{-1}\La F_{A}(\a_{0})\\
&=-2\sqrt{-1}\bar{\pa}_{A}^{\ast}\bar{\pa}_{A}^{\ast}\a^{0,2}+\sqrt{-1}\La F_{A}(\a_{0}).\\
\end{split}
\end{equation}
Next, we observe that
\begin{equation}\nonumber
\begin{split}
|\langle\bar{\pa}_{A}^{\ast}\bar{\pa}_{A}^{\ast}\a^{0,2},\a_{0}\rangle_{L^{2}(X)}&=|\langle\ast(F_{A}^{0,2}\wedge\ast\a^{0,2}),\a_{0}\rangle_{L^{2}(X)}|\\
&\leq\|F_{A}^{0,2}\|_{L^{2}(X)}\|\a^{0,2}\|_{L^{4}(X)}\|\a_{0}\|_{L^{4}(X)}\\
&\leq\varepsilon\|\a_{0}\|^{2}_{L^{4}(X)}+\frac{1}{2\varepsilon}\|F_{A}^{0,2}\|^{2}_{L^{2}(X)}\|\a^{0,2}\|^{2}_{L^{4}(X)},
\end{split}
\end{equation}
where $\varepsilon$ is a positive constant. We also observe that
$$|\langle\La F_{A}(\a_{0}),\a_{0}\rangle_{L^{2}(X)}|\leq\|\La F_{A}\|_{L^{2}(X)}\|\a_{0}\|^{2}_{L^{4}(X)}.$$
This yields the desired estimate
\begin{equation}\nonumber
\begin{split}
\langle\De_{A}\a_{0},\a_{0}\rangle_{L^{2}(X)}&\leq\frac{1}{2\varepsilon}\|F_{A}^{0,2}\|^{2}_{L^{2}(X)}\|\a^{0,2}\|^{2}_{L^{4}(X)}+(\varepsilon+\|\La F_{A}\|_{L^{2}(X)})\|\a_{0}\|^{2}_{L^{4}(X)}\\
&\lesssim(\varepsilon+\|\La F_{A}\|_{L^{2}(X)})\|\a_{0}\|^{2}_{L^{2}_{1}(X)}+\frac{1}{2\varepsilon}\|F_{A}^{0,2}\|^{2}_{L^{2}(X)}\|\a^{0,2}\|^{2}_{L^{4}(X)} \\
&\lesssim(\varepsilon+\|\La F_{A}\|_{L^{2}(X)})(1+\|\theta\|^{2}_{L^{\infty}(X)})\|\na_{A}\a_{0}\|^{2}_{L^{2}(X)}\\
&+\frac{1}{2\varepsilon}\|F_{A}^{0,2}\|^{2}_{L^{2}(X)}\|\a^{0,2}\|^{2}_{L^{4}(X)},
\end{split}
\end{equation}
where we use the fact, See the second item of Theorem \ref{T3},
\begin{equation}
\|\a_{0}\|^{2}_{L^{2}(X)}\lesssim\|\theta\|^{2}_{L^{\infty}(X)}\|\na_{A}\a_{0}\|^{2}_{L^{2}(X)}=\|\theta\|^{2}_{L^{\infty}(X)}\langle\na^{\ast}_{A}\na_{A}\a_{0},\a_{0}\rangle_{L^{2}(X)}
\end{equation}
and  Sobolev embedding $L^{2}_{1}\hookrightarrow L^{4}$. We can provide $(\varepsilon+\|\La F_{A}\|_{L^{2}(X)})(1+\|\theta\|^{2}_{L^{\infty}(X)})$ sufficiently small to ensure that
\begin{equation}\label{E9}
\langle\De_{A}\a^{0},\a^{0}\rangle_{L^{2}(X)}\lesssim\|F_{A}^{0,2}\|^{2}_{L^{2}(X)}\|\a^{0,2}\|^{2}_{L^{4}(X)}.
\end{equation}
Following the identity (ii) in Proposition \ref{P1}, we also have
$$\bar{\pa}_{A}\bar{\pa}_{A}^{\ast}\a^{0,2}=\sqrt{-1}\bar{\pa}_{A}\bar{\pa}_{A}\a_{0}.$$
On a direct calculation, we have
$$\De_{A}=2\De_{\bar{\pa}_{A}}+\sqrt{-1}[\La,F^{1,1}_{A}]+\sqrt{-1}[\La,F^{0,2}_{A}]-\sqrt{-1}[\La,F^{2,0}_{A}].$$
Noting that for any $\a^{0,2}\in\Om^{0,2}(X,E)$,  $$\sqrt{-1}[\La,F^{1,1}_{A}]\a^{0,2}=\sqrt{-1}[\La,F^{0,2}_{A}]\a^{0,2}=0$$
and 
$$\langle\La(F^{2,0}_{A}\wedge\a^{0,2}),\a^{0,2}\rangle_{L^{2}(X)}=\langle F^{2,0}_{A}(\La\a^{0,2}),\a^{0,2}\rangle_{L^{2}(X)}=0.$$  
Hence we have
\begin{equation}\label{E4.4}
\langle\De_{A}\a^{0,2},\a^{0,2}\rangle_{L^{2}(X)}=2\langle\De_{\bar{\pa}_{A}}\a^{0,2},\a^{0,2}\rangle_{L^{2}(X)}=2\langle\sqrt{-1} [F^{0,2}_{A}(\a_{0}),\a^{0,2}\rangle_{L^{2}(X)}.
\end{equation} 
Next, we also observe that
\begin{equation}\nonumber
\begin{split}
|\langle F_{A}^{0,2}(\a_{0}),\a^{0,2}\rangle_{L^{2}(X)}|&\leq\|F_{A}^{0,2}\|_{L^{2}(X)}\|\a^{0,2}\|_{L^{4}(X)}\|\a_{0}\|_{L^{4}(X)}\\
&\leq\|\a_{0}\|^{2}_{L^{4}(X)}+\frac{1}{2}\|F_{A}^{0,2}\|^{2}_{L^{2}(X)}\|\a^{0,2}\|^{2}_{L^{4}(X)}\\
&\lesssim(\|\na_{A}\a_{0}\|^{2}_{L^{2}(X)}+\|\a_{0}\|^{2}_{L^{2}(X)})+\frac{1}{2}\|F_{A}^{0,2}\|^{2}_{L^{2}(X)}\|\a^{0,2}\|^{2}_{L^{4}(X)}\\
&\lesssim(1+\|\theta\|^{2}_{L^{\infty}(X)})\|\na_{A}\a_{0}\|^{2}_{L^{2}(X)}+\frac{1}{2}\|F_{A}^{0,2}\|^{2}_{L^{2}(X)}\|\a^{0,2}\|^{2}_{L^{4}(X)}.\\
\end{split}
\end{equation}
This yields the desired estimation
\begin{equation}\nonumber
\begin{split}
\langle\De_{A}\a^{0,2},\a^{0,2}\rangle_{L^{2}(X)}&\lesssim\|F^{0,2}_{A}\|^{2}_{L^{2}(X)}\|\a^{0,2}\|^{2}_{L^{4}(X)}+(1+\|\theta\|^{2}_{L^{\infty}(X)})\|\na_{A}\a_{0}\|^{2}_{L^{2}(X)}\\
&\lesssim\|F_{A}^{0,2}\|^{2}_{L^{2}(X)}\|\a^{0,2}\|^{2}_{L^{4}(X)}
+(1+\|\theta\|^{2}_{L^{\infty}(X)})\|F_{A}^{0,2}\|^{2}_{L^{2}(X)}\|\a^{0,2}\|^{2}_{L^{4}(X)}\\
&\lesssim\|F_{A}^{0,2}\|^{2}_{L^{2}(X)}(1+\|\theta\|^{2}_{L^{\infty}(X)})\|\a^{0,2}\|^{2}_{L^{2}_{1}(X)}.
\end{split}
\end{equation}
Following Theorem \ref{T4} and Kato inequality $|\na|\a^{0,2}||\leq |\na_{A}\a^{0,2}|$, we have
\begin{equation}\label{E4.2}
\begin{split}
\|\a^{0,2}\|_{L^{2}(X)}&\lesssim \|\theta\|_{L^{\infty}(X)}\|\na|\a^{0,2}|\|_{L^{2}(X)}\lesssim \|\theta\|_{L^{\infty}(X)}\|\na_{A}\a^{0,2}\|_{L^{2}(X)}.\\
\end{split}
\end{equation}
By combining these estimates for $\a^{0,2}$, we complete this proof.
\end{proof}
\begin{lemma}\label{L6}
Assume the hypothesis of Lemma \ref{L5}. We denote by $S^{\pm}$ the positive (resp. negative) part of  the scalar curvature $S$ of metric $g$. Then there are constants $\varepsilon=\varepsilon(X,\theta)\in(0,1]$ and $C=C(X,\theta)\in[1,\infty)$ with following significance.  If the curvature $F_{A}$ of the connection $d_{A}$ and $S^{-}$ satisfies
$$\|\La F_{A}\|_{L^{2}(X)}+\|S^{-}\|_{L^{2}(X)}\leq \varepsilon$$
then for any $\a\in\Om^{2}_{(2)}(X,E)$, we have an inequality 
\begin{equation}\label{E4.3}
\|\na_{A}\a^{0,2}\|^{2}_{L^{2}(X)}\leq C\langle\De_{A}\a^{0,2},\a^{0,2}\rangle_{L^{2}(X)}.
\end{equation}
\end{lemma}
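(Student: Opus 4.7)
The plan is to apply a Bochner--Weitzenb\"ock identity to $\a^{0,2}$ alone and absorb all curvature error terms into the left-hand side of (\ref{E4.3}) using the three smallness hypotheses: $\|S^-\|_{L^2(X)}$, $\|\La F_A\|_{L^2(X)}$, and (inherited from Lemma \ref{L5}) $\|F_A^+\|_{L^2(X)}$.

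First I would write the Weitzenb\"ock identity for $\a^{0,2}$. Using the relation $\De_A = 2\De_{\bar{\pa}_A} + \sqrt{-1}[\La, F^{1,1}_A] + \sqrt{-1}[\La, F^{0,2}_A] - \sqrt{-1}[\La, F^{2,0}_A]$ already exploited in the proof of Lemma \ref{L5}, combined with the Bochner--Kodaira--Nakano formula on a K\"ahler surface (with its Ricci/scalar curvature term, which for $(0,2)$-forms reduces to a multiple of $S$), pairing against $\a^{0,2}$ in $L^2$ and integrating by parts on the rough Laplacian term yields a formula of the form
\begin{equation*}
\langle \De_A \a^{0,2}, \a^{0,2}\rangle_{L^2(X)} = \|\na_A \a^{0,2}\|^2_{L^2(X)} + \int_X \tfrac{S}{2}|\a^{0,2}|^2 + \langle \mathcal{F}(\a^{0,2}), \a^{0,2}\rangle_{L^2(X)},
\end{equation*}
where the algebraic curvature endomorphism $\mathcal{F}(\a^{0,2})$ is pointwise bounded by $C(|\La F_A| + |F_A^+|)\,|\a^{0,2}|$. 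This last bound follows from exactly the type-counting already done in (\ref{E4.4}): many of the $[\La, F_A^{\bullet,\bullet}]$ terms vanish on $\a^{0,2}$ because $\La\a^{0,2}=0$ and because wedging two $(0,2)$-forms on a surface produces a form of degree $>2n$.

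Next, I would rearrange to solve for $\|\na_A \a^{0,2}\|^2_{L^2(X)}$ and dominate each error term. Splitting $S = S^+ - S^-$ lets us drop the favorable $S^+$ piece, so only $\int_X S^- |\a^{0,2}|^2 \leq \|S^-\|_{L^2(X)} \|\a^{0,2}\|^2_{L^4(X)}$ must be estimated; the bundle term contributes at most $C(\|\La F_A\|_{L^2(X)} + \|F_A^+\|_{L^2(X)})\|\a^{0,2}\|^2_{L^4(X)}$ by H\"older. The Sobolev embedding $L^2_1 \hookrightarrow L^4$ on the $4$-manifold (applied through compactly supported cutoffs as in Lemma \ref{L4}) bounds $\|\a^{0,2}\|^2_{L^4(X)} \lesssim \|\a^{0,2}\|^2_{L^2_1(X)}$. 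Finally, applying Theorem \ref{T4} to the scalar function $|\a^{0,2}|$ (a $0$-form, so $p=0 \neq n=2$) together with the Kato inequality $|\na|\a^{0,2}|| \leq |\na_A \a^{0,2}|$ promotes this to
\begin{equation*}
\|\a^{0,2}\|^2_{L^2_1(X)} \lesssim (1+\|\theta\|^2_{L^\infty(X)})\|\na_A \a^{0,2}\|^2_{L^2(X)}.
\end{equation*}

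Collecting the three estimates yields an inequality of the form
\begin{equation*}
\|\na_A \a^{0,2}\|^2_{L^2(X)} \leq \langle \De_A \a^{0,2}, \a^{0,2}\rangle_{L^2(X)} + C(1+\|\theta\|^2_{L^\infty(X)})\bigl(\|S^-\|_{L^2(X)} + \|\La F_A\|_{L^2(X)} + \|F_A^+\|_{L^2(X)}\bigr)\|\na_A \a^{0,2}\|^2_{L^2(X)},
\end{equation*}
and choosing $\varepsilon = \varepsilon(X,\theta)$ small enough that the prefactor on the right is $\leq 1/2$ absorbs the gradient term into the left-hand side, giving (\ref{E4.3}). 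The main technical obstacle is verifying the pointwise bound $|\mathcal{F}(\a^{0,2})| \lesssim (|\La F_A| + |F_A^+|)|\a^{0,2}|$: this is the only step that is not purely mechanical and requires the same careful type-by-type bookkeeping used at (\ref{E4.4}) in Lemma \ref{L5}. Once this algebraic verification is done, the rest is routine Cauchy--Schwarz, Sobolev embedding, and absorption.
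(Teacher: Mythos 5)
Your proposal is correct and follows essentially the same route as the paper: the Weitzenb\"ock formula for $\De_{\bar{\pa}_{A}}$ on $(0,2)$-forms (the paper quotes Itoh's version, in which the only bundle-curvature term is $[\sqrt{-1}\La F_{A},\cdot\,]$, so the type-counting you flag as the ``main technical obstacle'' is exactly what the paper settles via the identity (\ref{E4.4}) and the vanishing of the $F^{1,1}_{A}$, $F^{0,2}_{A}$, $F^{2,0}_{A}$ commutators on $\a^{0,2}$), followed by dropping the favorable $S^{+}$ term, H\"older plus the Sobolev embedding $L^{2}_{1}\hookrightarrow L^{4}$, the Poincar\'e-type inequality from Theorem \ref{T4} with the Kato inequality to control $\|\a^{0,2}\|_{L^{2}}$ by $\|\na_{A}\a^{0,2}\|_{L^{2}}$, and absorption for small $\varepsilon$. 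No substantive difference or gap.
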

\begin{proof}
For any $\a^{0,2}\in\Om^{0,2}(X,E)$, the Weizenb\"{o}ck formula gives, See \cite{Ito},
$$\De_{\bar{\pa}_{A}}\a^{0,2}=\na^{\ast}_{A}\na_{A}\a^{0,2}+S\a^{0,2}+[\sqrt{-1}\La F_{A},\a^{0,2}],$$
where $S$ is the scalar curvature of $g$. Next, we have
\begin{equation}\nonumber
\begin{split}
\|\na_{A}\a^{0,2}\|^{2}_{L^{2}(X)}&\leq\langle\De_{\bar{\pa}_{A}}\a^{0,2},\a^{0,2}\rangle_{L^{2}(X)}+|\langle[\sqrt{-1}\La F_{A},\a^{0,2}],\a^{0,2}\rangle_{L^{2}(X)}|-\int_{X}(S^{+}+S^{-})|\a^{0,2}|^{2}\\
&\lesssim\langle\De_{\bar{\pa}_{A}}\a^{0,2},\a^{0,2}\rangle_{L^{2}(X)}+\|\La F_{A}\|_{L^{2}(X)}\|\a^{0,2}\|^{2}_{L^{4}(X)}-\int_{X}S^{-}|\a^{0,2}|^{2}\\
&\lesssim\langle\De_{\bar{\pa}_{A}}\a^{0,2},\a^{0,2}\rangle_{L^{2}(X)}+(\|\La F_{A}\|_{L^{2}(X)}+\|S^{-}\|_{L^{2}(X)})\|\a^{0,2}\|^{2}_{L^{2}_{1}(X)}\\
&\lesssim\langle\De_{\bar{\pa}_{A}}\a^{0,2},\a^{0,2}\rangle_{L^{2}(X)}+(\|\La F_{A}\|_{L^{2}(X)}+\|S^{-}\|_{L^{2}(X)})(1+\|\theta\|^{2}_{L^{\infty}(X)})\|\na_{A}\a^{0,2}\|^{2}_{L^{2}(X)}\\
&\lesssim\langle\De_{A}\a^{0,2},\a^{0,2}\rangle_{L^{2}(X)}+(\|\La F_{A}\|_{L^{2}(X)}+\|S^{-}\|_{L^{2}(X)})(1+\|\theta\|^{2}_{L^{\infty}(X)})\|\na_{A}\a^{0,2}\|^{2}_{L^{2}(X)}\\
\end{split}
\end{equation}
here we use the inequality (\ref{E4.2}) and identity (\ref{E4.4}). Hence we can provide $(\|\La F_{A}\|_{L^{2}(X)}+\|S^{-}\|_{L^{2}(X)})$ sufficiently small to ensure inequality (\ref{E4.3}). 
\end{proof}
\begin{proof}[\textbf{Proof of Theorem \ref{T1.6}}]
For any $\a\in\mathcal{H}^{2}_{(2)}(X,E)$, we can decompose the $2$-form $\a$ as $\a=\a^{0,2}+\a^{2,0}+\a_{0}\otimes\w+\a_{0}^{1,1}$. If the scalar curvature $S$ is nonnegative and the curvature $F_{A}$ obeys 
$$\|F^{+}_{A}\|_{L^{2}(X)}\leq\varepsilon,$$
where $\varepsilon$ is a positive constant less than the constants on Lemma \ref{L5} and \ref{L6}. Therefore, combining the inequalities on Lemma \ref{L5} and \ref{L6}, we have the desired estimation
\begin{equation}\nonumber
\begin{split}
\langle\De_{A}\a^{0,2},\a^{0,2}\rangle_{L^{2}(X)}&\lesssim \|F^{0,2}_{A}\|^{2}_{L^{2}(X)}(1+\|\theta\|^{2}_{L^{\infty}(X)})^{2}\|\na_{A}\a^{0,2}\|^{2}_{L^{2}(X)}\\
&\lesssim \|F_{A}^{0,2}\|^{2}_{L^{2}(X)}(1+\|\theta\|^{2}_{L^{\infty}(X)})^{2}\langle\De_{A}\a^{0,2},\a^{0,2}\rangle_{L^{2}(X)}.\\
\end{split}
\end{equation}
Since $\|F_{A}^{0,2}\|^{2}_{L^{2}(X)}\leq\|F_{A}^{+}\|_{L^{2}(X)}$, we can provide $\varepsilon$ sufficiently small to ensure $\a^{0,2}$ is $L^{2}$-harmonic form. Then, by the Equation (\ref{E4.3}), we get  $\na_{A}\a^{0,2}\equiv0$. The Kato inequality implies that $\na|\a^{0,2}|=0$. Therefore, the Gromov's vanishing theorem \ref{T4} implies that $|\a^{0,2}|=0$, i.e, $\a^{0,2}=0$. At last, by the Equation (\ref{E9}), $\a_{0}$ also vanishes. Therefore, any $L^{2}$-harmonic form on $E$ is anti-self-dual.
\end{proof} 
\begin{proof}[\textbf{Proof of Corollary \ref{C1}}]
Suppose now that the Hermitian vector $E$ of rank $r$, is equipped with a Hermitian  connection $d_{A}$. There are unique Hermitian connections  $d_{A^{\ast}}$ on $E^{\ast}$ and $d_{EndE}:=d_{A\otimes A^{\ast}}$ on $EndE=E\otimes E^{\ast}$ induced by connection $d_{A}$. We then have  $F_{A\otimes A^{\ast}}=F_{A}\otimes Id_{E^{\ast}}+Id_{E}\otimes F_{A^{\ast}}$ and $F_{A^{\ast}}=-F^{\dagger}_{A}$, where $\dagger$ is the transposition operator $EndE\rightarrow End E^{\ast}$ \cite[Charp V, Section 4]{Dem}. Therefore,
$$| F_{A\otimes A^{\ast}}|\leq|F_{A}\otimes Id_{E^{\ast}}|+|Id_{E}\otimes F_{A^{\ast}}|=2\sqrt{r}|F_{A}|.$$
We can choose the $L^{n}$-norm of the curvature $F_{A}$ small enough to ensure $$\|F_{A\otimes A^{\ast}}\|_{L^{n}(X)}\leq\varepsilon,$$ 
where $\varepsilon$ is the constant on Theorem \ref{T3} and \ref{T1.6}. Therefore, $\mathcal{H}_{(2)}^{2}(X,EndE)=0$, for $n\geq3$ and  the $L^{2}$-harmonic form on $End E$ is anti-self-dual, for $n=2$. Suppose also that the connection $A$ is a Yang-Mills connection with $L^{2}$-norm curvature. Noting that $F_{A}$ is a harmonic $2$-form on the bundle $\mathfrak{g}_{E}\subset End E$. Hence $F_{A}=0$ for $n\geq 3$ and $F^{+}_{A}=0$ for $n=2$. We complete the proof of this result.  
\end{proof}
\subsection*{Acknowledgements}
We would like to thank Gromov for kind comments regarding his article \cite{Gro}.  We would also like to thank the anonymous referee for  careful reading of my manuscript and helpful comments. In particular, the referees pointed out that we can study the nonvanishing theorem in the small enough $L^{\infty}$-norm case. This work is supported by Nature Science Foundation of China No. 11801539. 

\bigskip
\footnotesize

\end{document}